\theoremstyle{definition}
\newcommand{\CC}{\mathbb C} 
\newcommand{\ZZ}{\mathbb Z}
\def\StrangeCross
\def\bea{\begin{eqnarray}}
\def\eea{\end{eqnarray}}
\def\nn{\nonumber}
\def\ZZ{\mathbb{Z}}
\def\CC{\mathbb{C}}
\def\hh{\mathfrak{h}} 
\def\nn{\mathfrak{n}}
\def\gg{\mathfrak{g}} 
\def\endproof{\hfill$\square$\medskip}
\newtheorem{theorem}{Theorem}[section]
\newtheorem{lemma}[theorem]{Lemma}
\newtheorem{proposition}[theorem]{Proposition}
\newtheorem{problem}[theorem]{Problem}
\newenvironment{exafont}{\begin{bf}}{\end{bf}}
\newtheorem{corollary}[theorem]{Corollary}
\theoremstyle{definition}
\newtheorem{definition}[theorem]{Definition}
\newtheorem{remark}[theorem]{Remark}
\newtheorem{example}[theorem]{Example}
\begin{document}


\title{Generalized electrical Lie algebras}

\author[Arkady Berenstein]{Arkady~Berenstein}
\address{A.~B.: University of Oregon,
Mathematics Departments, USA }
\email{arkadiy@uoregon.edu}
\author[Azat Gainutdinov]{Azat~Gainutdinov}
\address{A.~G.:  Laboratoire de Mathématiques et Physique Théorique CNRS/UMR 7350,
Fédération Denis Poisson FR2964, Université de Tours, Parc de Grammont, 37200 Tours, France}
\email{gainut@gmail.com}
\author[Vassily~Gorbounov]{Vassily~Gorbounov}
\address{V.~G.: Faculty of Mathematics, National Research University Higher School of Economics, Usacheva 6, 119048 Moscow, Russia}
\email{vgorb10@gmail.com}

\thanks{This work was partially supported by the Simons Foundation Collaboration Grant for Mathematicians no.~636972 (AB)
}

\begin{abstract}
We generalize the electrical Lie algebras originally introduced by Lam and Pylyavskyy in several ways. To each Kac-Moody Lie algebra $\gg$ we associate two types (vertex type and edge type) of the generalized electrical algebras.
The electrical Lie algebras of vertex type are always  subalgebras of $\gg$ and are flat deformations of the nilpotent Lie subalgebra of $\gg$. In many cases including $sl_n$, $so_n$, and $sp_{2n}$ we find new (edge) models for  our generalized electrical Lie algebras of vertex type. Finding an edge model in general is an interesting open problem. 
\end{abstract}

\maketitle
\setcounter{tocdepth}{3}
\tableofcontents
MSC2020: MSC2020: 17B05, 17B10, 05E10, 17B67, 82B20

Key words: Electrical Lie Algebra, Electrical Networks, Serre relations

\section{Introduction and main results}
The electrical Lie algebras were introduced by T. Lam and P. Pylyavskyy in \cite{LP} and further studied by Yi Su in \cite{su}.
An infinitely generated electrical Lie algebra of type $A$ also appeared in the context of categorification in representation theory \cite{ss16}.
The aim of this paper is to generalize the notion of electrical Lie algebra into a multiparametric family in any given
semisimple or Kac-Moody Lie algebra $\gg$, in such a way that they are flat deformations of the nilpotent part $\nn$ of $\gg$. We also construct various embeddings (vertex and edge models) of the electrical Lie algebras into the corresponding Kac-Moody ones.

We start with the definition of the electrical Lie algebra given below which depends on a set of parameters $a_i$. If all $a_i$ are equal to the imaginary unit these generators in the case of $sl_n$ give an important representation of the Temperley-Lieb algebra for the zero value of the loop fugacity parameter (\cite{Azat}) and they also give a solution to the Zamolodchikov tetrahedral equation \cite{KKS}, \cite{GT}.

\begin{definition}
Let $A=(a_{ij})$ be a (generalized, not necessarily symmetrizable) $I\times I$ Cartan matrix and let $\gg=\gg_A=<e_i,f_i,i\in I>$ be the corresponding Kac-Moody Lie algebra. For any family  ${\bf a}=(a_i,{i\in I})\in  \CC^I$ let
$\gg^{({\bf a})}$ be a  Lie subalgebra of $\gg$  generated by 
\begin{equation}
\label{eq:electric generators}
u_i:=e_i+a_i[e_i,f_i]-a_i^2f_i\ , i\in I.
\end{equation}
We call $\gg^{({\bf a})}$ a {\it generalized electrical Lie algebra of type} $\gg$.
\end{definition} 

In particular,  $sl_2^{({\bf a})}$ is generated by a single nilpotent $u=\begin{pmatrix} a & 1\\
-a^2 & -a
\end{pmatrix}\in sl_2$.

This definition makes sense  if the ground field $\CC$ is replaced with any algebra containing all $a_i$, e.g., with $\CC[a_i,i\in I]$ and it is justified by the following.  

\begin{theorem}
\label{th:canonical electric} For any Kac-Moody Lie algebra $\gg$ and any ${\bf a}\in \CC^I$ the generalized electrical Lie algebra
$\gg^{({\bf a})}$ of type $\gg$ satisfies 
\begin{equation}
\label{eq:deformed Serre}  
(ad~u_i)^{1-a_{ij}}(u_j)=-2\delta_{a_{ij},-1}a_{ji}a_ia_ju_i
\end{equation}
for all distinct $i,j\in I$.
\end{theorem}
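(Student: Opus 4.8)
The plan is to exhibit each $u_i$ as an automorphic image of the Chevalley generator $e_i$ and then to push the ordinary Serre relations of $\gg$ through that automorphism. Using $[e_i,f_i]=h_i$, $[h_i,e_i]=2e_i$, $[h_i,f_i]=-2f_i$ one checks that $(\op{ad}f_i)(e_i)=-h_i$, $(\op{ad}f_i)^2(e_i)=-2f_i$ and $(\op{ad}f_i)^3(e_i)=0$, so the terminating series yields
\[
u_i=\exp(-a_i\op{ad}f_i)(e_i).
\]
Write $\theta_i:=\exp(-a_i\op{ad}f_i)$, which is a genuine automorphism of $\gg$ since $\op{ad}f_i$ is locally nilpotent. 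Because $\op{ad}(\theta_i x)=\theta_i\,(\op{ad}x)\,\theta_i^{-1}$, the left-hand side of the claimed identity equals
\[
(\op{ad}u_i)^{1-a_{ij}}(u_j)=\theta_i\Big((\op{ad}e_i)^{1-a_{ij}}\big(\theta_i^{-1}(u_j)\big)\Big),
\]
so the whole problem reduces to applying the raising operator $E:=\op{ad}e_i$ exactly $n:=1-a_{ij}$ times to the single vector $\theta_i^{-1}(u_j)$ and then applying $\theta_i$ once at the end.

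Next I would compute $\theta_i^{-1}(u_j)=\exp(a_i\op{ad}f_i)(u_j)$ termwise. From $[f_i,e_j]=0$ and $[f_i,h_j]=a_{ji}f_i$ (so $(\op{ad}f_i)^2(h_j)=0$) one gets
\[
\theta_i^{-1}(u_j)=e_j+a_jh_j+a_ia_ja_{ji}\,f_i-a_j^2\,\exp(a_i\op{ad}f_i)(f_j).
\]
Applying $E^n$ term by term, the $e_j$-summand dies by the ordinary Serre relation $(\op{ad}e_i)^{1-a_{ij}}(e_j)=0$, while the $h_j$- and $f_i$-summands are controlled by the elementary rank-one facts $E(h_j)=-a_{ji}e_i$, $E^2(h_j)=0$ and $E(f_i)=h_i$, $E^2(f_i)=-2e_i$, $E^3(f_i)=0$; these survive only for small $n$.

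The delicate term is the $f_j$-tail, and this is where the key point lies. Since $[e_i,f_j]=0$ and $[h_i,f_j]=-a_{ij}f_j$, the vector $f_j$ is a highest-weight vector of weight $-a_{ij}=n-1$ for the subalgebra $\gg_i:=\langle e_i,h_i,f_i\rangle\cong sl_2$; the cyclic $\gg_i$-module it generates is spanned by $f_j,(\op{ad}f_i)f_j,\dots,(\op{ad}f_i)^{n-1}f_j$, with $(\op{ad}f_i)^{n}(f_j)=0$ again by a Serre relation. On this module the standard $sl_2$-identity $E\,(\op{ad}f_i)^k(f_j)=k(n-k)\,(\op{ad}f_i)^{k-1}(f_j)$ shows that $E^{n}$ annihilates every spanning vector $(\op{ad}f_i)^k(f_j)$ with $0\le k\le n-1$, so $E^{n}\big(\exp(a_i\op{ad}f_i)(f_j)\big)=0$ and the whole $f_j$-tail disappears. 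Realizing that this tail sits inside one $n$-dimensional $sl_2$-module annihilated by $E^{n}$ is the crux of the argument; the rest is bookkeeping.

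It then remains to assemble three short computations. If $a_{ij}\le-2$, i.e. $n\ge3$, all surviving terms vanish and the left-hand side is $0$, matching the right-hand side. If $a_{ij}=0$, i.e. $n=1$, the generalized Cartan matrix axiom $a_{ij}=0\Rightarrow a_{ji}=0$ annihilates the $h_j$- and $f_i$-contributions while $E(e_j)=[e_i,e_j]=0$, giving $[u_i,u_j]=0$. The only nonzero case is $a_{ij}=-1$, i.e. $n=2$, where $E^2(h_j)=0$ but $E^2(f_i)=-2e_i$ leaves $E^2\big(\theta_i^{-1}(u_j)\big)=-2a_ia_ja_{ji}\,e_i$; applying $\theta_i$ sends $e_i$ back to $u_i$ and yields exactly $-2a_{ji}a_ia_ju_i=-2\delta_{a_{ij},-1}a_{ji}a_ia_ju_i$, as claimed.
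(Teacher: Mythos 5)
Your proof is correct, but it takes a genuinely different route from the paper's. The paper argues by direct inductive bracket computation: it introduces the explicit elements $u_{i^rj}=\sum_{k=0}^r a_i^{r-k}\binom{a_{ij}+r-1}{r-k}\bigl(e_{i^kj}-(-1)^ra_i^{2k}a_j^2f_{i^kj}\bigr)$, proves the recursion $[u_i,u_{i^rj}]=(r+1)u_{i^{r+1}j}$ via binomial identities (Proposition \ref{pr:iterated Serre}), deduces $(\op{ad}\,u_i)^r(u_j)=r!\,u_{i^rj}$ for $r\ge 3$ after isolating the correction term $-2a_ia_ja_{ji}u_i$ at $r=2$, and concludes by noting that at $r=1-a_{ij}$ the binomial coefficients and the classical Serre elements $e_{i^rj},f_{i^rj}$ all vanish. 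You instead recognize $u_i=\exp(-a_i\,\op{ad}f_i)(e_i)$ and conjugate: writing $\theta_i=\exp(-a_i\,\op{ad}f_i)$, the deformed relation reduces to computing $(\op{ad}\,e_i)^{1-a_{ij}}$ on $\theta_i^{-1}(u_j)$, where the $e_j$-term dies by the ordinary Serre relation, the $h_j$- and $f_i$-terms are elementary rank-one computations, and the only delicate piece, the $f_j$-tail, is killed by standard $sl_2$ highest-weight theory (your identity $E\,(\op{ad}f_i)^k(f_j)=k(n-k)(\op{ad}f_i)^{k-1}(f_j)$ is the usual highest-weight formula, and the tail lies in the span of $(\op{ad}f_i)^k(f_j)$, $k\le n-1$, by the $f$-Serre relation). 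Both arguments use the same inputs, the $e$- and $f$-Serre relations in $\gg$, with your argument additionally invoking local nilpotency of $\op{ad}f_i$, which is standard in Kac--Moody theory. What the paper's computation buys is finer information: closed formulas for all iterated brackets $(\op{ad}\,u_i)^r(u_j)$, not merely their vanishing at the top exponent. What your argument buys is brevity and conceptual transparency, and it dovetails with the conjugation viewpoint the paper itself uses elsewhere: the observation $u_i=(Ad~e^{-a_if_i})(e_i)$ opens the proof of Theorem \ref{th:conical electrical}, so in effect you show that Theorem \ref{th:canonical electric} also follows from that single observation.
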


It turns out that these relations are defining. This was conjectured for finite types in \cite[Conjecture 5.2]{LP}, which was partially confirmed in \cite{su} and \cite{CGJ,CY}. 

\begin{theorem}  
\label{th:flat electric}
In the notation of Theorem \ref{th:canonical electric} the relations \eqref{eq:deformed Serre} provide a presentation of $\gg^{({\bf a})}$. Moreover, 
$\gg^{({\bf a})}$ is a flat deformation of the nilpotent part ${\mathfrak n}=<e_i,i\in I>$ of $\gg$ in the sense that $\gg^{({\bf a})}\cong \nn$ for any ${\bf a}\in \CC^I$ as  naturally filtered vector spaces.
\end{theorem}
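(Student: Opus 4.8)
The plan is to realize $\gg^{({\bf a})}$ as a filtered deformation of $\nn$ and run an associated-graded squeeze. Let $\mathfrak L$ denote the Lie algebra abstractly presented by generators $u_i$, $i\in I$, subject to the relations \eqref{eq:deformed Serre}. By Theorem \ref{th:canonical electric} the assignment $u_i\mapsto u_i$ defines a surjection $\pi\colon\mathfrak L\twoheadrightarrow\gg^{({\bf a})}$, and the two assertions amount to: (i) $\pi$ is an isomorphism, and (ii) $\op{gr}\gg^{({\bf a})}\cong\nn$ for the natural filtration. I would use two compatible gradings. On the free Lie algebra $\mathcal F=\mathcal F(x_i)$, hence on $\mathfrak L$, take the word-length grading $\mathcal F=\bigoplus_{d\ge1}\mathcal F_d$. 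On $\gg$ take the principal $\ZZ$-grading $\gg=\bigoplus_{n}\gg_{[n]}$ with $\deg e_i=1$, $\deg f_i=-1$, $\deg\hh=0$, so that $\nn=\bigoplus_{n\ge1}\gg_{[n]}$, each $\gg_{[n]}$ is finite dimensional, and $\op{gr}^{\op{ht}}\gg=\gg$. The key structural point is that $u_i=e_i+a_i[e_i,f_i]-a_i^2f_i$ has principal degrees $1,0,-1$ with top part $e_i$; hence $\phi\colon\mathcal F\to\gg$, $x_i\mapsto u_i$, satisfies $\phi(\mathcal F_{\le d})\subseteq F^{\op{ht}}_{\le d}\gg^{({\bf a})}$ and has top part $x_i\mapsto e_i$.

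For the upper bound I would pass to the word-length associated graded. The top (length $2-a_{ij}$) part of each relator $(\op{ad}u_i)^{1-a_{ij}}(u_j)+2\delta_{a_{ij},-1}a_{ji}a_ia_ju_i$ is the classical Serre relator $(\op{ad}x_i)^{1-a_{ij}}(x_j)$, since the correction term has length $1<2\le2-a_{ij}$; hence $\op{gr}^{w}\mathfrak L$ is a quotient of the Lie algebra presented by the undeformed Serre relations, which by the Gabber--Kac theorem is exactly $\nn$. Thus $\dim\op{gr}^{w}_d\mathfrak L\le\dim\gg_{[d]}$, and a fortiori $\dim\op{gr}^{w}_d\gg^{({\bf a})}\le\dim\gg_{[d]}$ for all $d$. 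For the matching lower bound I would use the height leading symbol: taking the principal degree-$d$ component defines a map $\op{gr}^{w}_d\gg^{({\bf a})}\to\gg_{[d]}$, well defined because $\phi(\mathcal F_{\le d-1})\subseteq F^{\op{ht}}_{\le d-1}\gg$ has vanishing degree-$d$ part, and surjective because the length-$d$ brackets of the $u_i$ have top components the corresponding brackets of the $e_i$, which span $\gg_{[d]}=\nn_d$. Hence $\dim\op{gr}^{w}_d\gg^{({\bf a})}\ge\dim\gg_{[d]}$.

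Combining the two bounds forces $\dim\op{gr}^{w}_d\gg^{({\bf a})}=\dim\op{gr}^{w}_d\mathfrak L=\dim\gg_{[d]}$ for every $d$, so the graded surjections $\nn\twoheadrightarrow\op{gr}^{w}\mathfrak L\twoheadrightarrow\op{gr}^{w}\gg^{({\bf a})}$ are all isomorphisms. Being an isomorphism on the (exhaustive, bounded-below, hence separated) associated graded, $\pi$ is itself an isomorphism, which shows that \eqref{eq:deformed Serre} are defining relations; and $\op{gr}^{w}\gg^{({\bf a})}\cong\nn$, together with the non-canonical isomorphism between a filtered vector space and its associated graded, yields $\gg^{({\bf a})}\cong\nn$ as filtered vector spaces (and shows incidentally that the word-length and principal filtrations coincide on $\gg^{({\bf a})}$, so $\gg^{({\bf a})}$ has no component in non-positive height). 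I expect the squeeze to be routine once set up; the main obstacle is the interface between the two gradings, namely guaranteeing that the upper bound on $\op{gr}^{w}$ genuinely precludes any collapse of brackets of the $u_i$ into height $\le0$, together with the correct invocation of Gabber--Kac. The latter is delicate precisely in the non-symmetrizable case permitted by the statement, where the classical Serre relations need not be defining for $\nn$; there one must either restrict to the symmetrizable case or read $\nn$ through its Serre presentation, and I would flag this as the point requiring the most care.
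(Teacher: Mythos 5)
Your proof is correct, and it rests on the same key insight as the paper's -- the leading terms of the deformed relators are the classical Serre relators, so the deformation is controlled by its associated graded -- but the execution is genuinely different. The paper works one level up, with enveloping algebras: it filters $A=R\otimes U(\gg)$ by $\deg e_i=1$, $\deg f_i=\deg h_i=0$, observes that $\op{gr}u_i=e_i$ and that the leading terms of the relators are exactly the Serre relators presenting $B_0=R\otimes U(\nn)$, and then invokes a purely ideal-theoretic lemma (Proposition~\ref{pr:flat filtred algebras}: if the relations hold and their leading terms present the associated graded, then they present the algebra). That route involves no dimension count at all, and for this reason it works verbatim over any coefficient $\CC$-algebra $R$ containing the $a_i$ -- a generality the paper explicitly wants. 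Your squeeze stays at the Lie-algebra level and is more self-contained (no appendix lemma, no passage through $U(\gg)$), but it consumes the finite-dimensionality of the principal graded pieces $\gg_{[d]}$ and the fact that the ground ring is a field, so it does not immediately yield the $R$-linear statement. Your two bounds are in fact the Lie-algebra shadows of the paper's two steps: the upper bound (quotient of the Serre-presented algebra) plays the role of the paper's identification of $\ker\varphi_{\op{gr}X,\op{gr}B}$ with $\langle \op{gr}Y\rangle$, while your top-height-component surjection $\op{gr}^{w}_d\gg^{({\bf a})}\to\gg_{[d]}$ plays the role of the paper's map $\op{gr}B\to B_0\subset\op{gr}A$; the paper then concludes by an inclusion-of-ideals argument where you conclude by counting. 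Finally, your flag about Gabber--Kac is apt and is not a defect of your argument alone: the paper's proof hides exactly the same dependence in the sentence ``Clearly, $\op{gr}A$ is generated by $e_i,f_i,h_i$ subject to the usual Serre relations\dots'', and the resolution is the one you name -- read $\gg_A$ (hence $\nn$) through its Chevalley--Serre presentation, for which the positive part is abstractly presented by the Serre relations for an arbitrary generalized Cartan matrix, since the ideal generated by the elements $(\op{ad}e_i)^{1-a_{ij}}(e_j)$ inside the free Lie algebra on the $e_i$ is already an ideal of the whole auxiliary algebra.
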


We prove Theorem \ref{th:flat electric} in Section \ref{subsec:proof of Theorem flat electric} (along with its generalization to any coefficient algebra $R$ containing all $a_i$) by utilizing a mini-theory of flat deformations of associative and Lie algebras which we present for the reader's convenience in {\bf Appendix}.

Note that if we view all $a_i$ as formal parameters (after replacing $\CC$ with $\CC[{\bf a}]=\CC[a_i,i\in I]$), then $\CC[{\bf a}]\otimes \gg$ is graded by the root lattice $Q:=\bigoplus\limits_{i\in I} \ZZ\alpha_i$ via $\deg a_i=\deg e_i=-\deg f_i=\alpha_i$, $i\in I$. In particular, $u_i$ is homogeneous with $\deg u_i=\alpha_i$ and both $\gg^{({\bf a})}$ and $U(\gg^{({\bf a})})$ are $Q$-graded.

Now we construct a ``real form" $\gg^{({\bf b})}$ of the electrical Lie algebra $sl_n^{({\bf b})}$.

\begin{theorem} 
\label{th:conical electrical}
Let $\gg=sl_n$. 
Then 
$g_{\bf a}u_ig_{\bf a}^{-1}=
e_i+b_{i-1}f_{i-1}$ for $i=1,\ldots,n-1$ in the notation of \eqref{eq:electric generators} (with the convention $f_0=0$, $a_0=0$), where we abbreviated $b_j:=-a_{j}a_{j+1}$ and 
$g_{\bf a}:=e^{a_{n-1}f_{n-1}}\cdots e^{a_2f_2}e^{a_1f_1}\in SL_n$.

\end{theorem}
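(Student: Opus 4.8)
The plan is to pass from conjugation in $SL_n$ to the adjoint action on $sl_n$, where the whole statement reduces to a short, \emph{local} commutator computation. Since $\op{Ad}(e^X)=e^{\op{ad}X}$ and each $f_j$ is nilpotent in the defining representation, every factor exponentiates to a polynomial in $\op{ad}f_j$, so that
\[
g_{\bf a}u_ig_{\bf a}^{-1}=e^{a_{n-1}\op{ad}f_{n-1}}\circ\cdots\circ e^{a_1\op{ad}f_1}(u_i),
\]
where the rightmost operator $e^{a_1\op{ad}f_1}$ acts first. Writing $e_i=E_{i,i+1}$, $f_i=E_{i+1,i}$ and $h_i=[e_i,f_i]=E_{ii}-E_{i+1,i+1}$, the generator $u_i=e_i+a_ih_i-a_i^2f_i$ is supported on the $2\times2$ block in rows and columns $i,i+1$.

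First I would establish a locality statement. Because $u_i$ occupies only the block $\{i,i+1\}$, the derivation $\op{ad}f_j$ annihilates it unless $j\in\{i-1,i,i+1\}$, as one reads off from $[E_{j+1,j},E_{kl}]=\delta_{jk}E_{j+1,l}-\delta_{l,j+1}E_{k,j}$ with $k,l\in\{i,i+1\}$. Hence the factors with $j\le i-2$, which act first, fix $u_i$. After the two relevant factors act I expect to land on $e_i+b_{i-1}f_{i-1}$, an element supported in row $i$ (columns $i-1$ and $i+1$); the same bracket formula, now with $k=i$ and $l\in\{i-1,i+1\}$, shows that every remaining factor $e^{a_j\op{ad}f_j}$ with $j\ge i+1$ fixes it as well. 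Thus only $e^{a_{i-1}\op{ad}f_{i-1}}$ and $e^{a_i\op{ad}f_i}$ contribute, in that order.

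The core is then the two-step computation. Applying $\op{ad}f_{i-1}$ to $u_i$ gives $-a_if_{i-1}+a_i^2E_{i+1,i-1}$, where the auxiliary non-simple root vector $E_{i+1,i-1}=[f_i,f_{i-1}]$ enters; since $(\op{ad}f_{i-1})^2u_i=0$ the first exponential truncates after one term, producing $w=u_i-a_{i-1}a_if_{i-1}+a_{i-1}a_i^2E_{i+1,i-1}$. Applying $e^{a_i\op{ad}f_i}$ to $w$ then uses the finite $\op{ad}f_i$-string (one checks $(\op{ad}f_i)^3w=0$, with $(\op{ad}f_i)^2w=-2f_i$) together with $[f_i,e_i]=-h_i$, $[f_i,h_i]=2f_i$, $[f_i,f_{i-1}]=E_{i+1,i-1}$ and $[f_i,E_{i+1,i-1}]=0$. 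I expect the main obstacle to be purely the bookkeeping of the cancellations: the $h_i$, the $f_i$, and the $E_{i+1,i-1}$ contributions must each cancel in pairs, leaving exactly $e_i-a_{i-1}a_if_{i-1}=e_i+b_{i-1}f_{i-1}$. Finally the boundary case $i=1$ is covered by the convention $f_0=0$, $a_0=0$: the $f_{i-1}$-factor is the identity, $b_0=0$, and the computation collapses to $g_{\bf a}u_1g_{\bf a}^{-1}=e_1$, as required.
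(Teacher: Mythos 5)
Your proposal is correct --- I checked the cancellations you deferred: $e^{a_i\op{ad}f_i}(w)=w+a_i\bigl(-h_i+2a_if_i-a_{i-1}a_iE_{i+1,i-1}\bigr)-a_i^2f_i$, and the $h_i$, $f_i$, and $E_{i+1,i-1}$ contributions do cancel in pairs, leaving $e_i-a_{i-1}a_if_{i-1}$ --- but your core step is genuinely different from the paper's. You share the locality reduction (only $e^{a_{i-1}\op{ad}f_{i-1}}$ and $e^{a_i\op{ad}f_i}$ matter, acting in that order), but you then expand the two exponentials head-on, using nilpotency of the $\op{ad}$-strings and explicit matrix units. The paper instead starts from the identity $u_i=(\op{Ad}e^{-a_if_i})(e_i)$, so the relevant piece of the conjugation collapses:
\begin{equation*}
(\op{Ad}\,e^{a_if_i}e^{a_{i-1}f_{i-1}})(u_i)=(\op{Ad}\,e^{a_if_i}e^{a_{i-1}f_{i-1}}e^{-a_if_i})(e_i)=(\op{Ad}\,e^{a_{i-1}(f_{i-1}+a_i[f_i,f_{i-1}])})(e_i),
\end{equation*}
and since $[f_{i-1},e_i]=0$ and $[f_{i-1},[f_i,f_{i-1}]]=0$, only the $[f_i,f_{i-1}]$ part acts, giving $e_i+a_{i-1}a_i[[f_i,f_{i-1}],e_i]=e_i-a_{i-1}a_if_{i-1}$ in a single bracket, with no cancellation bookkeeping at all. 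That identity is also what powers the paper's generalizations (Proposition \ref{pr:conical electric22}, Theorems \ref{th:conical electric22} and \ref{th:peacock electric}), where your matrix-unit computation, tied to the type $A$ realization, would not transfer; conversely, your argument is more elementary and self-contained --- every step is a finite, directly checkable computation in $\mathfrak{gl}_n$, uniform in $i$ including the boundary case $i=1$ --- and requires no prior observation about the shape of $u_i$.
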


We prove Theorem \ref{th:conical electrical} in Section \ref{subsec:proof of Theorem conical electric}.

A bit paradoxically, if all $a_i$ are imaginary hence $b_j$ in Theorem \ref{th:conical electrical}. We produce more real forms of $\gg^{(\bf a)}$ below and in Section \ref{Sect:other electrical homomorphisms}.

More generally, for any ${\bf b}=(b_1,\ldots,b_{n-2})\in \CC^{n-2}$ we  denote by $sl_n^{(\bf b)}$ the Lie subalgebra of $sl_n$ generated by $u_i:=e_i+b_{i-1}f_{i-1}$, $i=1,\ldots,n-1$ (with the convention $b_0=0$, $f_0=0$ and establish the following

\begin{theorem}  
\label{th:flat electric type A}
The Lie algebra $sl_n^{(\bf b)}$ has a presentation

$\bullet$ $[u_i,u_j]=0$ if $|i-j|>1$ 

$\bullet$ $[u_i,[u_i,u_j]]=-2b_{\min(i,j)} u_i$ if $|i-j|=1$

\end{theorem}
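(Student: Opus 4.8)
The plan is to establish two things: that the generators $u_i = e_i + b_{i-1}f_{i-1}$ satisfy the listed relations, and that these relations are defining. The first is a direct computation with the Chevalley relations of $sl_n$. When $|i-j|>1$ every one of the four brackets in $[u_i,u_j]=[e_i+b_{i-1}f_{i-1},\,e_j+b_{j-1}f_{j-1}]$ vanishes on support grounds (the indices $i,j,i-1,j-1$ are pairwise non-adjacent), so $[u_i,u_j]=0$. For $|i-j|=1$, say $j=i+1$, I would expand $[u_i,u_{i+1}]=[e_i,e_{i+1}]+b_i h_i+b_{i-1}b_i[f_{i-1},f_i]$ with $h_i:=[e_i,f_i]$, and then bracket once more with $u_i$; the terms $[e_i,[e_i,e_{i+1}]]$ and $[f_{i-1},[f_{i-1},f_i]]$ die by the Serre relations of $sl_n$, while $[e_i,h_i]=-2e_i$ and the mixed terms assemble to $-2b_i e_i - 2 b_{i-1}b_i f_{i-1} = -2b_i u_i$. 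The case $j=i-1$ is symmetric and yields $-2b_{i-1}u_i$; in both cases the constant is $-2b_{\min(i,j)}$, as claimed. I would note in passing that along the subvariety $b_j=-a_ja_{j+1}$ this also follows from Theorems \ref{th:canonical electric}--\ref{th:conical electrical}, but since a generic ${\bf b}$ (e.g.\ one with an isolated zero entry) is not of this form, a direct treatment is genuinely needed.

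Let $\mathfrak{L}^{(\bf b)}$ denote the abstract Lie algebra presented by the two families of relations. Having checked the relations, there is a canonical surjection $\pi\colon \mathfrak{L}^{(\bf b)} \twoheadrightarrow sl_n^{(\bf b)}$. I would prove $\pi$ is an isomorphism by sandwiching dimensions between the two sides and $\dim\nn=\binom{n}{2}$, where $\nn=\langle e_i\rangle$ is the nilpotent part of $sl_n$.

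For the lower bound $\dim sl_n^{(\bf b)}\ge \binom{n}{2}$ I would use the $\ZZ$-grading $sl_n=\bigoplus_d (sl_n)_d$ with $\deg E_{jk}=k-j$, so that $\deg e_i=+1$ and $\deg f_i=-1$. Since $u_i=e_i+b_{i-1}f_{i-1}$ has top-degree symbol $e_i$, the associated graded subalgebra $\op{gr}\, sl_n^{(\bf b)}\subseteq sl_n$ contains every $e_i$, hence contains the subalgebra $\nn$ they generate. Therefore $\dim sl_n^{(\bf b)}=\dim \op{gr}\, sl_n^{(\bf b)}\ge \dim\nn = \binom{n}{2}$.

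For the upper bound $\dim \mathfrak{L}^{(\bf b)}\le \binom{n}{2}$ I would filter $\mathfrak{L}^{(\bf b)}$ by bracket length (the lower central series) and pass to the associated graded, using the flat-deformation machinery of the Appendix. The top-length parts of the defining relations are exactly $[u_i,u_j]=0$ for $|i-j|>1$ and $[u_i,[u_i,u_j]]=0$ for $|i-j|=1$ (the right-hand side $-2b_{\min}u_i$ has lower length and drops out), which are precisely the Serre relations of $\nn$. Consequently there is a surjection $\nn\twoheadrightarrow \op{gr}\,\mathfrak{L}^{(\bf b)}$, giving $\dim\mathfrak{L}^{(\bf b)}=\dim\op{gr}\,\mathfrak{L}^{(\bf b)}\le \dim\nn=\binom{n}{2}$. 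Combining the three estimates,
\[
\binom{n}{2}\le \dim sl_n^{(\bf b)}=\dim \pi(\mathfrak{L}^{(\bf b)})\le \dim\mathfrak{L}^{(\bf b)}\le \binom{n}{2},
\]
so all are equalities and $\pi$ is an isomorphism. The main obstacle is the upper bound: one must ensure, via the flat-deformation framework, that passing to the length-graded object produces no relations beyond the symbols of the given ones, so that the comparison with $\nn$ is valid; the lower-length deformation terms $-2b_{\min}u_i$ are exactly what this framework is designed to control.
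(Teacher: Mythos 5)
Your proposal is correct, but it reaches the theorem by a genuinely different route than the paper, in both halves. For the verification of the relations, the paper never computes directly in $sl_n$: it derives them (Lemma~\ref{le:b-relations}) by specializing the conjugation formula of Theorem~\ref{th:conical electric22} --- the edge generators with $b_i=-a_ia_{i+1}$ are $(Ad~g_{\bf a})$-conjugates of the vertex generators, which satisfy \eqref{eq:deformed Serre} by Theorem~\ref{th:canonical electric}, and since the resulting identities live in the polynomial ring $\hat R=\CC[b_1,\ldots,b_{n-2}]$ they survive the specialization $\hat R\twoheadrightarrow R$ to \emph{arbitrary} ${\bf b}$, including tuples with an isolated zero entry (a terminological quibble: such tuples are the opposite of generic --- the image of ${\bf a}\mapsto{\bf b}$ is Zariski-dense --- but your point that they are not covered by Theorem~\ref{th:conical electrical} is exactly the paper's own remark). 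Your direct Chevalley computation is more elementary and equally valid; note only that the case $j=i-1$ is analogous rather than literally symmetric (the terms $[e_i,h_{i-1}]$ and $[f_{i-1},[e_i,e_{i-1}]]$ appear instead of $[e_i,h_i]$ and $[e_i,[f_{i-1},f_i]]$), though it does yield $-2b_{i-1}u_i$ as you claim. For the presentation, the paper applies Proposition~\ref{pr:flat filtred algebras} to $B=U(sl_n^{({\bf b})})$ inside $A=R\otimes U(sl_n)$ with the filtration $\deg e_i=1$, $\deg f_i=\deg h_i=0$, obtaining directly that the ideal of relations equals $\langle Y\rangle$; you instead run a dimension sandwich $\binom{n}{2}\le\dim sl_n^{({\bf b})}\le\dim\mathfrak{L}^{({\bf b})}\le\binom{n}{2}$ at the Lie-algebra level. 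Your sandwich is sound, with two caveats: the upper bound uses the \emph{ascending} bracket-length filtration, not the lower central series (which is descending; with the LCS you would additionally need $\bigcap_k L^k=0$ to control $\dim\mathfrak{L}^{({\bf b})}$), and your closing worry is misplaced --- for the upper bound you only need that the symbols $\bar u_i$ satisfy the Serre relations in $\op{gr}\mathfrak{L}^{({\bf b})}$, which together with Serre's presentation theorem for $\nn$ gives the surjection $\nn\twoheadrightarrow\op{gr}\mathfrak{L}^{({\bf b})}$; extra relations in the graded object would only shrink it and cannot hurt that bound (ruling them out is what proves flatness, but the sandwich does not need it). As for what each approach buys: yours is self-contained and elementary for $sl_n$ and makes the arbitrary-${\bf b}$ case transparent; the paper's single argument proves Theorems~\ref{th:flat electric type A}, \ref{th:edge electric sl_n general} and \ref{th:conical electrical2} simultaneously, works over any coefficient algebra $R$, gives the presentation at the level of $U(\gg^{({\bf b})})$, and does not rely on finite-dimensionality, so it extends to Kac-Moody types (as in Theorem~\ref{th:conical electrical2 non-conjugate}) where your count $\dim\nn=\binom{n}{2}$ has no analogue and one would have to pass to graded dimensions.
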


We  prove Theorem \ref{th:flat electric type A} in Section \ref{subsec:Proof of Theorem conical electrical2} as a particular case of Theorem \ref{th:edge electric sl_n general}.

Clearly, if all $b_i\ne 0$, Theorem \ref{th:flat electric type A} follows from Theorem \ref{th:conical electrical}. Note however, that for $n=5$, $b_1\ne 0$, $b_2=0$, $b_3\ne 0$, such a tuple $(a_1,a_2,a_3,a_4)$ does not exist and the assertion of Theorem \ref{th:flat electric type A} does not follow from Theorem \ref{th:conical electrical}.
 
Theorem \ref{th:conical electrical} suggests an ``edge model" of generalized electrical Lie algebras as follows.

Given a Kac-Moody Lie algebra $\gg$ and a tuple ${\bf b}=\{b_{ij}=b_{ji}|a_{ij}=-1\}$, we denote by $\gg^{({\bf b})}$ the Lie algebra generated by $u_i$, $i\in I$ subject to
\begin{equation}
\label{eq:deformed Serre b}  
(ad~u_i)^{1-a_{ij}}(u_j)+2\delta_{a_{ij},-1}b_{ij}u_i=0
\end{equation}
for all distinct $i,j\in I$
where $b_{ij}=0$ unless $a_{ij}=-1$.
We refer to $\gg^{({\bf b})}$ as the generalized electrical Lie algebra of the {\it edge type $\gg$} associated with the Lie algebra $\gg$ (so it is logical to think of $\gg^{({\bf a})}$ as the {\it vertex type}). Ultimately, on Theorems \ref{th:conical electrical} and \ref{th:flat electric type A}, we say that an {\it edge model} of an electric Lie algebra of type $\gg$ is any subalgebra of $\gg$ isomorphic to  $\gg^{({\bf b})}$.

Clearly, if all $b_{ij}=a_{ji}a_ia_j$ whenever $a_{ij}=-1$ in the notation of \eqref{eq:deformed Serre}, then by Theorem \ref{th:flat electric}, $\gg^{({\bf b})}\cong \gg^{({\bf a})}$. Note however, that the  generalized electrical Lie algebra $sl_n^{({\bf b})}$ of the  edge type $sl_n$ is also embedded  into $sl_n$ for any ${\bf b}\in \CC^n$ (here $b_i=b_{i,i+1}=b_{i+1,i}$ for $i=1,\ldots,n-2$). 

Based on this, we can pose a natural 

\begin{problem} Describe the set ${\mathcal V}_A$  of all tuples ${\bf b}=\{b_{ij}=b_{ji}|a_{ij}=-1\}$ such that $\gg^{(\bf b)}$ admits an edge model.   
    
\end{problem}

Theorem \ref{th:flat electric} gives an edge model for all ${\bf b}=(b_{ij})\in {\mathcal V}_A$ with 
$b_{ij}=a_{ji}a_ia_j$.

Using the edge model $sl_n^{({\bf b})}$, we generalize a remarkable observation of Lam and Pylyavskyy from \cite{LP}  that $sl_n^{({\bf 1})}\cong sp_{n-1}$, where ${\bf 1}=(1,\ldots,1)$, for all $n\ge 2$ as follows. Recall that the Lie algebra $sp_{n}$ for $n$ odd was introduced in \cite{GZ}.

\begin{theorem} 
\label{th:form}
$sl_n^{(\bf b)}$ preserves the form
$\omega_{\bf b}:=\sum\limits_{k=1}^{n-1}\left(\prod\limits_{i=k}^{n-2} (-b_i)\right )v^*_k\wedge v^*_{k+1}$ in $\CC^n=\CC v_1\oplus \cdots\oplus \CC v_n$. If $n$ is even then this form is non-degenerate as it follows from \cite{Us}. In this case $sl_n^{(\bf b)}$ is the intersection of $sp_{\omega_{\bf b}}\subset sl_n$ with the annihilator of $v^{1}=v_1-b_1v_3+b_1b_3v_5-b_1b_3b_5v_7+\cdots\in \CC^n=V_{\omega_1}$ (where $\{v_1,\ldots,v_n\}$ is the standard basis of $\CC^n$), therefore, $sl_n^{(\bf b)}$ is isomorphic to $sp_{n-1}$ when $b_1\cdots b_{n-2}\ne 0$.
If $n$ is odd, then the form $\omega_{\bf b}$ has a one-dimensional kernel that is an invariant of the action of $sl_n^{(\bf b)}$. 
\end{theorem}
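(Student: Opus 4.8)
The plan is to realize everything inside the defining representation $\CC^n=\CC v_1\oplus\cdots\oplus\CC v_n$, where the generator $u_i=e_i+b_{i-1}f_{i-1}$ acts by $u_iv_{i+1}=v_i$, $u_iv_{i-1}=b_{i-1}v_i$, and $u_iv_j=0$ otherwise. Abbreviate $c_k:=\prod_{i=k}^{n-2}(-b_i)$, so that $\omega_{\bf b}(v_k,v_{k+1})=c_k=-\omega_{\bf b}(v_{k+1},v_k)$ while $\omega_{\bf b}$ vanishes on all other pairs of basis vectors. Splitting off the $i=k-1$ factor of the product gives the single identity
\[
c_{k-1}=-b_{k-1}c_k
\]
that drives the entire argument.

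I would first prove invariance, which holds for every $n$. Since the $X\in sl_n$ preserving a fixed bilinear form constitute a Lie subalgebra, it suffices to check each $u_i$, and because $\omega_{\bf b}$ is skew this reduces to the symmetry of $(x,y)\mapsto\omega_{\bf b}(u_ix,y)$ on basis vectors. All entries of that matrix are diagonal except the pair $\omega_{\bf b}(u_iv_{i-1},v_{i+1})=b_{i-1}c_i$ and $\omega_{\bf b}(u_iv_{i+1},v_{i-1})=\omega_{\bf b}(v_i,v_{i-1})=-c_{i-1}$, and these coincide precisely because $c_{i-1}=-b_{i-1}c_i$. Hence $sl_n^{(\bf b)}\subseteq sp_{\omega_{\bf b}}$.

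For $n$ even with $b_1\cdots b_{n-2}\ne0$, non-degeneracy of $\omega_{\bf b}$ is the input taken from \cite{Us} (equivalently, its Pfaffian $c_1c_3\cdots c_{n-1}$ is a monomial in the $b_i$ that is nonzero exactly then), so $sp_{\omega_{\bf b}}\cong sp_n$. Writing $v^1=\sum_{l\ \mathrm{odd}}d_lv_l$ with $d_1=1$ and $d_{l+2}=-b_ld_l$, the same recursion yields $u_iv^1=0$ for all $i$: for odd $i$ both $v_{i\pm1}$ are even-indexed and absent from $v^1$, while for even $i$ one computes $u_iv^1=(d_{i+1}+b_{i-1}d_{i-1})v_i=0$. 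As $\{X:Xv^1=0\}$ is again a Lie subalgebra, $sl_n^{(\bf b)}$ lies in $sp_{\omega_{\bf b}}\cap\{X:Xv^1=0\}$, which is by definition the odd symplectic Lie algebra $sp_{n-1}$ of \cite{GZ}, the stabilizer of the nonzero vector $v^1$. Because $Sp_n$ is transitive on nonzero vectors, the evaluation $X\mapsto Xv^1$ maps $sp_{\omega_{\bf b}}$ onto $\CC^n$, whence $\dim\bigl(sp_{\omega_{\bf b}}\cap\{X:Xv^1=0\}\bigr)=\dim sp_n-n=\tfrac{n(n-1)}2$; by the flatness of $sl_n^{(\bf b)}$ (Theorems \ref{th:flat electric} and \ref{th:flat electric type A}) its dimension equals $\dim\mathfrak n=\tfrac{n(n-1)}2$ as well, so the inclusion is an equality and $sl_n^{(\bf b)}\cong sp_{n-1}$.

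For $n$ odd the form is skew on an odd-dimensional space, hence degenerate, and the computation above now gives $\omega_{\bf b}(v^1,\cdot)=0$ unconditionally, so $v^1\in\ker\omega_{\bf b}$; solving the tridiagonal system $\omega_{\bf b}(\,\cdot\,,\cdot)=0$ shows that, when the $b_i$ are nonzero, every kernel vector has vanishing even coordinates and odd coordinates proportional to those of $v^1$, so $\ker\omega_{\bf b}=\CC v^1$ is one-dimensional. Its invariance is automatic, since the radical of any Lie-algebra-invariant bilinear form is an invariant subspace. The only genuinely external ingredients are the non-degeneracy from \cite{Us} and the identification of the stabilizer with $sp_{n-1}$ together with its dimension from \cite{GZ}; the main work — matching dimensions through flatness and carrying out all three module computations — collapses onto the single relation $c_{k-1}=-b_{k-1}c_k$, so I expect no serious obstacle beyond careful bookkeeping of the odd symplectic dimension.
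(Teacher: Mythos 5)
Your proposal is correct and follows essentially the same route as the paper's proof: direct verification that each $u_i$ preserves $\omega_{\bf b}$, the observation that $v^1$ is annihilated by all $u_i$, non-degeneracy for even $n$ via the tridiagonal determinant/Pfaffian fact from \cite{Us}, and a dimension count (using flatness, $\dim sl_n^{({\bf b})}=n(n-1)/2=\dim sp_{n-1}$) to upgrade the inclusion into the stabilizer to an equality. The only cosmetic differences are that you work in $V$ rather than $V^*$, you compute the stabilizer dimension via transitivity of the symplectic group on nonzero vectors instead of citing simplicity of $V$ as an $sp_n$-module, and for odd $n$ you determine the kernel by solving the tridiagonal system directly, whereas the paper exhibits the rank-$(n-1)$ normal form $w_1\wedge w_2+\cdots+w_{n-2}\wedge w_{n-1}$ (and thereby also gets the isomorphism with $sp_{n-1}$ in the odd case, which the statement itself does not require).
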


We prove Theorem \ref{th:form} in Section \ref{subsec proof theorem form}.
For $n=6$ the form $\Omega_6^{(\bf b)}$ is given in $v_1,\ldots,v_6$ by its Gram matrix
$$
\Omega_6^{(\bf b)}=\tiny{\begin{pmatrix}
0 & b_1b_2b_3b_4  & 0  & 0 & 0&0 \\
-b_1b_2b_3b_4 & 0 & -b_2b_3b_4 & 0 & 0&0\\
0&b_2b_3b_4 &0& b_3b_4& 0&0\\
 0 &0&-b_{3}b_{4}&0&-b_{4}&0 \\
0 & 0&0& b_{4}&0&1\\
0& 0&0&0  & -1& 0
\end{pmatrix}.}
$$

In view of Theorem \ref{th:form}, the electrical Lie algebra of type $sl_n$ has a trivial center if $n$ is odd and a one-dimensional center if $n$ is even. It is curious that the non-injective homomorphism $sl_{2k}^{(\bf b)}\cong sp_{2k-1}\to sl_{2k-1}$ is far from the natural embedding $sp_{2k-1}\subset sl_{2k-1}$.

We will use other principle unipotents for conjugation of the standard embedding into several new ones.
\begin{example}\label{starlike}
Conjugating with  $g'_{\bf a}=e^{a_2f_2}e^{a_1f_1}e^{a_3f_3}$ and $g''_{\bf a}=e^{a_1f_1}e^{a_3f_3}e^{a_2f_2}$ in $SL_4$ respectively,  gives two new embeddings $sl_4^{(\bf a)}\hookrightarrow sl_4$:

$$
g'_{\bf a}u_i{g'}_{\bf a}^{-1}=
e_i+\delta_{i,2}(b_1f_1+b_3f_3+b_1b_3[f_1,[f_3,f_2]])\ ,
$$
$$
g''_{\bf a}u_i{g''}_{\bf a}^{-1}=
e_i+(1-\delta_{i,2})(b_if_2+b[f_i,f_2])$$
for $i=1,2,3$,
where we abbreviated $b_1=-a_1a_2$, $b_3=-a_2a_3$, $b=-a_1a_2a_3$.

\end{example}
We generalize the first embedding to all $\gg$ with a {\it conical} Dynkin diagram which includes all simply-laced $\gg$ (Theorem \ref{th:conical electric22}) and the second embedding to all $\gg$ with a star-like conical Dynkin diagram, including $D_4$ and $\hat D_4$ (Theorem \ref{th:peacock electric}). The following  result gives such an edge model in types $B$ and $C$ and is of ``conjugation type" as Theorem \ref{th:flat electric type A}.

\begin{theorem} 
\label{th:conical electrical2}

(a) Let $\gg=so_{2n+1}$ with $I=\{1,\ldots,n\}$ and the short root is $\alpha_1$. Then the assignments $u_i\mapsto  
e_i+b_{i-1,i}f_{i-1}$, $i=1,\ldots,n$
define an injective homomorphism of Lie algebras $so_{2n+1}^{(\bf b)}\hookrightarrow so_{2n+1}$ (with the convention $b_{01}=0$)

(b) Let $\gg=sp_{2n}$ with $I=\{1,\ldots,n\}$ and let the long root be $\alpha_n$. Then the assignments $u_i \mapsto e_i + b_{i-1,i} f_{i-1} - \delta_{i,n} \frac{1}{2}b_{n,n-1}^2 [f_{n-1}, [f_{n-1}, f_n]]$ for $i=1,\ldots,n$ define an injective homomorphism of the Lie algebras $sp_{2n}^{(\bf b)}\hookrightarrow sp_{2n}$.

\end{theorem}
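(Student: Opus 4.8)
My plan is to prove both parts uniformly by the conjugation method behind Theorem~\ref{th:conical electrical}: I would exhibit a single unipotent element $g\in G$ (with $G=SO_{2n+1}$, resp.\ $Sp_{2n}$) for which the asserted images $\tilde u_i$ are the simultaneous conjugates $\tilde u_i=g\,u_i\,g^{-1}$ of the vertex-type generators $u_i=e_i+a_i[e_i,f_i]-a_i^2f_i$ of \eqref{eq:electric generators}. The point of doing this is that the defining edge relations \eqref{eq:deformed Serre b} then hold for free: conjugation by a fixed $g$ is a Lie-algebra automorphism, so applying it to the vertex relations \eqref{eq:deformed Serre} of Theorem~\ref{th:canonical electric} turns them into $(\operatorname{ad}\tilde u_i)^{1-a_{ij}}(\tilde u_j)=-2\delta_{a_{ij},-1}a_{ji}a_ia_j\,\tilde u_i$, i.e.\ exactly \eqref{eq:deformed Serre b} with $b_{ij}=a_{ji}a_ia_j$. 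Thus $u_i\mapsto\tilde u_i$ is a homomorphism $\gg^{({\bf b})}\to\gg$ for every ${\bf b}$ of this special shape, and no bracket identity has to be verified on the nose.

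For $g$ I would take an ordered product $g=\prod_i e^{a_if_i}$ modelled on $g_{\bf a}$ of Theorem~\ref{th:conical electrical} and compute $g\,u_i\,g^{-1}=\exp\bigl(\sum_j a_j\operatorname{ad}f_j\bigr)(u_i)$ node by node. Since $[f_j,e_i]$, $[f_j,f_i]$ and $[f_j,[e_i,f_i]]$ all vanish when $i,j$ are non-adjacent in the Dynkin diagram, each conjugation is supported on $\alpha_i$ together with its neighbours, so the computation decouples into pieces of rank $\le 2$. Along the type-$A$ tail — the long roots in case~(a), the short roots in case~(b) — this is literally the calculation of Theorem~\ref{th:conical electrical} and reproduces the linear terms $e_i+b_{i-1,i}f_{i-1}$. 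The only genuinely new input is the rank-$2$ double bond $B_2$ (resp.\ $C_2$).

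The heart of the argument, and the step I expect to be the main obstacle, is the explicit exponential conjugation across the double bond, carried out in $so_5$ (resp.\ $sp_4$). Unlike the simply-laced case, $\operatorname{ad}f_{n-1}$ does not annihilate the long-root generator after one step: the double bond makes $(\operatorname{ad}f_{n-1})^2$ act nontrivially, carrying the $f_n$-component of $u_n$ into the root space of $-(2\alpha_{n-1}+\alpha_n)$. Consequently the Taylor expansion of $e^{a_{n-1}\operatorname{ad}f_{n-1}}$ contributes a genuine second-order term, and tracking it is precisely what produces the cubic correction on the long-root generator: the surviving quadratic coefficient is the $\tfrac{1}{2!}$ of the exponential, which is the origin of the factor $-\tfrac12$ in $-\tfrac12 b_{n,n-1}^2[f_{n-1},[f_{n-1},f_n]]$ of part~(b), and the analogous finite computation in $so_5$ settles part~(a). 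The delicate bookkeeping is the long/short Chevalley normalization, which inserts factors of $2$ in brackets such as $[f_{n-1},[e_{n-1},e_n]]$ and $[e_n,h_{n-1}]$; it is exactly these factors that force a cubic, rather than purely linear, tail on this one generator, and pinning down the resulting coefficients so that they match the asserted $\tilde u_i$ is where the real work lies. I would confirm the outcome directly in the $4\times4$ (resp.\ $5\times5$) matrix model.

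Two points then finish the proof. To pass from the special values $b_{ij}=a_{ji}a_ia_j$ to an arbitrary ${\bf b}$, I would argue by Zariski density: for each adjacent pair the left-hand side of \eqref{eq:deformed Serre b} evaluated on the stated $\tilde u_i$ is a $\gg$-valued polynomial in the coordinates $b_{ij}$, and the special locus is the image of the parametrization by the $a_i$, which contains the dense open subset where all $b_{ij}\neq0$ (solve the chain relations for the $a_i$); a polynomial identity valid on a dense set is valid identically, giving the homomorphism $\gg^{({\bf b})}\to\gg$ for every ${\bf b}$. For injectivity I would run a leading-term squeeze that simultaneously reproves flatness. On one side, the top-length parts of the relations \eqref{eq:deformed Serre b} are the ordinary Serre relations, so the associated graded of $\gg^{({\bf b})}$ for the generator-length filtration is a quotient of $\nn$, whence $\dim\gg^{({\bf b})}\le\dim\nn$. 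On the other side, each $\tilde u_i$ has principal-degree-one leading term $e_i$, so the associated graded of $\langle\tilde u_i\rangle\subseteq\gg$ for the principal filtration is a graded subalgebra containing all $e_i$, hence contains $\nn=\langle e_i\rangle$, whence $\dim\langle\tilde u_i\rangle\ge\dim\nn$. Since $\gg^{({\bf b})}$ surjects onto $\langle\tilde u_i\rangle$ and $\nn$ is finite-dimensional, these inequalities collapse to equalities, so the homomorphism is an isomorphism onto its image and in particular injective.
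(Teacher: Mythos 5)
Your three-step architecture (conjugate the vertex generators for the special values $b_{ij}=a_{ji}a_ia_j$, extend to arbitrary ${\bf b}$ by a polynomial-identity/density argument, then get injectivity from a filtration argument) is exactly the paper's plan: Lemma \ref{le:b-relations} plays the role of your conjugation step, the specialization from $\hat R=\CC[b_{ij}]$ plays the role of your Zariski-density step, and Proposition \ref{pr:flat filtred algebras} plays the role of your dimension squeeze. Your last two steps are sound as stated (the squeeze only needs that $\nn$ is finite dimensional and is presented by the Serre relations, and that each $\tilde u_i$ has symbol $e_i$), and for part (b) your conjugation computation is correct, including the identification of the $\tfrac1{2!}$ of the exponential as the source of $-\tfrac12 b_{n,n-1}^2[f_{n-1},[f_{n-1},f_n]]$.

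The genuine gap is part (a): no ordered product $g=\prod_i e^{a_if_i}$ conjugates the vertex generators onto purely linear images $e_i+b_{i-1,i}f_{i-1}$, so "the analogous finite computation in $so_5$ settles part (a)" is false. The obstruction is that an all-linear family of conjugates forces the diagram to be simply laced: away from the root your computation needs \emph{both} $(\operatorname{ad}f_i)^2(f_{i-1})=0$ (to linearize $e^{a_if_i}e^{a_{i-1}f_{i-1}}e^{-a_if_i}$) \emph{and} $(\operatorname{ad}f_{i-1})^2(f_i)=0$ (to kill the second-order term of the final exponential), while at the root the quadratic term $-\tfrac{a_0^2}{2}[{\bf f},[{\bf f},f_0]]$ survives whenever some $(\operatorname{ad}f_j)^2(f_0)\neq 0$. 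Since $so_{2n+1}$ has a double bond, one orientation of that bond breaks the expansion at the first step (an $(\operatorname{ad}f_i)^2(f_{i-1})$-term enters the exponent and the conjugate acquires components in the root space of $-(\alpha_{i-1}+\alpha_i)$), and the other orientation is precisely the part (b) computation, which produces the \emph{nonzero} cubic correction, contradicting linearity; your $4\times 4$/$5\times 5$ matrix check would expose this. There is also an orientation trap you must resolve before part (a) is even true: with the Kac convention and the short root at node $1$ the asserted relations already fail — one finds $(\operatorname{ad}\tilde u_2)^2(\tilde u_1)=-4be_2-2b^2f_1$, which is not proportional to $\tilde u_2=e_2+bf_1$ — so the linear model works only when the generator sitting at the \emph{short} node carries the $f$ of its long neighbour. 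Consequently part (a) must be proved not by conjugation but by directly verifying the two relations \eqref{eq:deformed Serre b} at the double bond with arbitrary parameters (a finite rank-$\le 3$ bracket computation, in the spirit of the paper's Proposition \ref{pr:local relations}, which is how the paper treats its genuinely non-conjugate edge models such as Theorem \ref{th:conical electrical2 non-conjugate}); this also makes your density step unnecessary for (a), and your dimension squeeze then finishes the proof. (Be aware that the paper's own proof of Lemma \ref{le:b-relations} for case (a) cites Theorem \ref{th:conical electric22}, whose hypothesis $a_{ij}\in\{0,-1\}$ for $j\neq 0$ fails at the double bond of $B_n$ however it is oriented — so this is not a defect you would have caught by reading the paper, but it is a defect nonetheless.)
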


We prove Theorem \ref{th:conical electrical2} in Section \ref{subsec:Proof of Theorem conical electrical2} by using results of 
Section \ref{Sect:other electrical homomorphisms} where we also construct vertex models for electrical Lie algebras of several other Kac-Moody types, including $E_6$, $E_7$, $E_8$, $F_4$, $G_2$ and their affine versions as well (Theorems \ref{th:conical electric22} and \ref{th:edge electric sl_n general}) along with some edge models (Theorem \ref{th:conical electric2 injective}).

Our next result also gives an edge model for dihedral, type $D$, and affine $A$ type, however, unlike Theorem \ref{th:conical electrical2}, we do not expect any intertwiner from the vertex model. 

\begin{theorem} 
\label{th:conical electrical2 non-conjugate}

(a) Let $\gg=\gg_A$ be a  Kac-Moody algebra of rank $2$ with $I=\{1,2\}$ such that $a_{21}\le -2$. Let 
$b_{12}:=a_{21}a_1a_2$. Then the assignments $u_1\mapsto e_1$, $u_2\mapsto e_2+b_{12}f_1$, define an injective homomorphism of Lie algebras $\gg^{(\bf b)}\hookrightarrow \gg$.

(b) Let $\gg=so_{2n}$, $n\ge 3$ with $I=\{1,\ldots,n\}$ and with the branch at $i=n-2$. Then the assignments $u_i\mapsto \begin{cases}
e_i+b_{i-1,i}f_{i-1} & \text{if $1\le i\le n-2$}\\
e_i+b_{n-2,i}f_{n-2}& \text{if $i\in\{n-1,n\}$}\\
\end{cases}$
$i=1,\ldots,n$,
define an injective homomorphism of Lie algebras $so_{2n}^{(\bf b)}\hookrightarrow so_{2n}$.

(c) Let $\gg=\widehat{sl}_n$, the untwisted affine Lie algebra of type $\hat A_{n-1}$ with $I=\{0,\ldots,n-1\}$. Then the assignments $u_i\mapsto e_i+b_{i-1,i}f_{i-1}$, where $i-1$ is calculated modulo $n$ for $i\in I$,
define an injective homomorphism of the Lie algebras $\widehat{sl}^{({\bf b})}\hookrightarrow \widehat{sl}_n$.

\end{theorem}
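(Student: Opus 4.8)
The three parts have the same logical shape, so the plan is to (i) check that the proposed elements $\tilde u_i\in\gg$ satisfy the defining relations \eqref{eq:deformed Serre b}, which by the presentation of $\gg^{(\bf b)}$ produces a homomorphism $\phi\colon\gg^{(\bf b)}\to\gg$, $u_i\mapsto\tilde u_i$, and (ii) prove $\phi$ injective by a leading-symbol argument. Step (i) localizes: since \eqref{eq:deformed Serre b} only couples pairs $(i,j)$, it suffices to verify, for each pair, one identity in the small-rank subalgebra generated by the $e,f$ appearing in $\tilde u_i,\tilde u_j$. For a non-adjacent pair ($a_{ij}=0$) the claim is $[\tilde u_i,\tilde u_j]=0$; writing $\tilde u_i=e_i+(\text{an }f\text{-term})$ and using $[e_i,f_j]=\delta_{ij}h_i$, $[e_i,e_j]=0$, and $[f_k,f_l]=0$ for non-adjacent $k,l$, every cross bracket vanishes (the surviving bracket $[e_i,f_{j-1}]$ is nonzero only when $j=i+1$, i.e.\ in the adjacent case). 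For an adjacent pair the claim becomes a rank-two identity that I verify by induction on the power of $\op{ad}$.

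I would begin with part (a), which contains the one genuinely new rank-two computation. Here $\tilde u_1=e_1$ and $\tilde u_2=e_2+b_{12}f_1$, and there are two relations to check: $(\op{ad}\tilde u_1)^{1-a_{12}}(\tilde u_2)+2\delta_{a_{12},-1}b_{12}\tilde u_1=0$, and $(\op{ad}\tilde u_2)^{1-a_{21}}(\tilde u_1)=0$, the $\delta$-term of the latter being absent since $a_{21}\le-2$. The first is immediate from the Serre relation $(\op{ad}e_1)^{1-a_{12}}e_2=0$ together with $(\op{ad}e_1)^2f_1=-2e_1$ and $(\op{ad}e_1)^kf_1=0$ for $k\ge3$; when $a_{12}=-1$ the single surviving term $-2b_{12}e_1$ cancels against $+2b_{12}e_1$. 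The content is the second relation: setting $X=e_2+b_{12}f_1$ and using $[e_2,f_1]=0$, I would compute $(\op{ad}X)^k e_1$ inductively, checking that the $f_1$-corrections accumulate only as $h_1$- and $e_2$-weighted terms and that after $1-a_{21}$ steps the full expression is annihilated by the Serre relation $(\op{ad}e_2)^{1-a_{21}}e_1=0$.

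For parts (b) and (c) the pairwise checks reduce to the type-$A$ computation of Theorem \ref{th:flat electric type A} together with the elementary non-adjacent vanishing above. In part (b), every adjacent pair has exactly the shape $\tilde u_i=e_i+b_{i-1,i}f_{i-1}$, $\tilde u_{i+1}=e_{i+1}+b_{i,i+1}f_i$ of a type-$A$ edge, including the two pairs $(n-2,n-1)$ and $(n-2,n)$ at the branch, so each is governed by the same $A_2$ identity; the only configuration outside a chain is the non-adjacent pair $(n-1,n)$, where both extra terms are multiples of $f_{n-2}$ and $[\tilde u_{n-1},\tilde u_n]=0$ follows from $[e_{n-1},f_{n-2}]=[e_n,f_{n-2}]=0$. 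In part (c) the cyclic diagram of $\hat A_{n-1}$ makes every node bivalent and the shift $i\mapsto i-1$ read modulo $n$; each adjacent pair is again the type-$A$ shape, and the only new point is that the pair $(n-1,0)$ closing the cycle has the same form $\tilde u_0=e_0+b_{n-1,0}f_{n-1}$, so closing the loop introduces no new relation.

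Finally, for injectivity I would use the principal grading $\deg e_i=1$, $\deg f_i=-1$, $\deg h_i=0$ and its increasing filtration on $\gg$. Each $\tilde u_i$ lies in filtration degree $1$ with leading symbol $e_i$, hence every iterated bracket of the $\tilde u_i$ has leading symbol the corresponding iterated bracket of the $e_i$; as these span $\nn=\langle e_i\rangle$, the associated graded of $\op{im}\phi$ contains $\nn$, so $\dim(\op{im}\phi)_\beta\ge\dim\nn_\beta$ in each root-lattice degree $\beta$. Combined with $\dim\gg^{(\bf b)}_\beta\le\dim\nn_\beta$, which holds because the relations \eqref{eq:deformed Serre b} deform the Serre relations (Theorem \ref{th:flat electric} and the Appendix), and with $\dim(\op{im}\phi)_\beta\le\dim\gg^{(\bf b)}_\beta$, all three dimensions coincide, forcing $\phi$ to be injective degree by degree. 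I expect the main obstacle to be the rank-two computation in part (a) with $a_{21}\le-2$, where the power of $\op{ad}$ is at least three and one must control the accumulated $h_1$- and $e_2$-corrections; parts (b) and (c) are then bookkeeping on top of the type-$A$ case and the branch/cycle closure.
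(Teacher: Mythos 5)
Your proposal is correct and follows essentially the same route as the paper: the paper also verifies the relations \eqref{eq:deformed Serre b} by local rank-two computations (Proposition \ref{pr:local relations}, packaged through Theorems \ref{th:conical electric2}--\ref{th:conical electric2 injective} for parts (a),(b) and type-$A$ plus cyclic symmetry for part (c)), and then obtains injectivity by exactly your leading-symbol/flat-deformation sandwich, formalized in Proposition \ref{pr:flat filtred algebras} of the Appendix. The only caveats are cosmetic: your intermediate claim about iterated brackets of the $\tilde u_i$ should be stated as ``the associated graded of $\op{im}\phi$ contains the subalgebra generated by the symbols $e_i$,'' and the dimension count should be run against the generator-word filtration on $\gg^{(\bf b)}$ rather than a root-lattice grading (which fails for numerical $b_{ij}$), precisely as the Appendix does.
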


We prove Theorem \ref{th:conical electrical2 non-conjugate} in Section \ref{subsec:proof of Theorem conical electrical2 non-conjugate}.

\begin{remark} The affine electrical Lie algebra of type $\hat A_{n-1}$ from the theorem above was first discovered by T.Lam and A.Postnikov, see \cite{L}.
\end{remark}

\begin{theorem}\label{th:su}
For any ${\bf b}\in \CC^{n-1}$ one has:

(a) $sp^{({\bf b})}_{2n}\cong sl_n^{(b_1,\ldots,b_{n-2})}\ltimes J$, 
where the first factor is a copy of $sl_n^{(b_1,\ldots,b_{n-2})}$  in $sp^{({\bf b})}_{2n}$ generated by $u_1,\ldots,u_{n-1}$  and $J$ is the Lie ideal of $sp^{({\bf b})}_{2n}$ generated by $u_n$ and $sl_n^{(b_1,\ldots,b_{n-2})}$. 

As $sl_n^{(b_1,\ldots,b_{n-2})}$ module $J$ is isomorphic to $S^2V$ where $V$ is the restriction to $sl_n^{(b_1,\ldots,b_{n-2})}\subset sl_n$ of the standard $sl_n$-module $V_{\omega_1}=\CC^n$. 

(b) If all $b_i\ne 0$, then there is an isomorphism of Lie algebras $J\cong sl_{n+1}^{({\bf b'})}$ for some choice of the parameters ${\bf b'}$ (which we specify in the proof) and the Lie algebra $sp^{({\bf b})}_{2n}$ is naturally isomorphic to $sl_{n}^{(b_1,\ldots,b_{n-2})}\ltimes sl_{n+1}^{({\bf b'})}$.

\end{theorem}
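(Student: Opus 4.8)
The plan is to produce the decomposition from a retraction onto the $A_{n-1}$-part together with the flatness of Theorem~\ref{th:flat electric}, and to read off the module and Lie structures from the embedding of Theorem~\ref{th:conical electrical2}(b) relative to the parabolic grading of $sp_{2n}$ at the node $\alpha_n$. For part (a) I first record that $u_1,\dots,u_{n-1}$ satisfy exactly the relations of Theorem~\ref{th:flat electric type A}, since the sub-diagram of $C_n$ on $\{1,\dots,n-1\}$ is $A_{n-1}$; hence $L:=\langle u_1,\dots,u_{n-1}\rangle$ is a quotient of $sl_n^{(b_1,\dots,b_{n-2})}$, and it is all of it because under the embedding $\phi\colon sp^{({\bf b})}_{2n}\hookrightarrow sp_{2n}$ of Theorem~\ref{th:conical electrical2}(b) these elements land in the $A_{n-1}$-Levi and there generate the edge model. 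Next I introduce the linear map fixing $u_1,\dots,u_{n-1}$ and sending $u_n\mapsto 0$; a short check that it respects \eqref{eq:deformed Serre b} (the two mixed relations at the edge $\{n-1,n\}$ both collapse once $u_n$ is killed, and $[u_i,u_n]=0$ for $i\le n-2$ is preserved) shows it is a Lie homomorphism $\psi\colon sp^{({\bf b})}_{2n}\to sl_n^{(b_1,\dots,b_{n-2})}$. Since $\psi$ retracts onto $L$, we get $sp^{({\bf b})}_{2n}=L\ltimes J$ with $J:=\ker\psi$ an ideal, $L\cap J=0$, and $J$ the ideal generated by $u_n$; flatness gives $\dim J=\dim sp^{({\bf b})}_{2n}-\dim L=n^2-\binom{n}{2}=\binom{n+1}{2}$.

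To identify $J$ as an $L$-module I use the three-term grading $sp_{2n}=\mathfrak g_{-1}\oplus\mathfrak g_0\oplus\mathfrak g_1$ by the coefficient of $\alpha_n$, for which $\mathfrak g_0=\mathfrak{gl}_n$ and $\mathfrak g_1\cong S^2V$ with $V=\CC^n=V_{\omega_1}$. Since $\phi(L)\subseteq\mathfrak g_0$, the projection $\pi_1\colon sp_{2n}\to\mathfrak g_1$ restricts to an $L$-equivariant map $\pi_1\circ\phi\colon J\to S^2V$. Its image is the $L$-submodule generated by $\pi_1\phi(u_n)=v_n^2$, which is a lowest weight vector; because the weight-raising parts $e_i$ of the $\phi(u_i)$ suffice, by a triangularity argument, to generate the irreducible $S^2V$, the map is onto, and equality of dimensions makes $\pi_1\circ\phi\colon J\xrightarrow{\ \sim\ }S^2V$ an isomorphism of $L$-modules. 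This proves (a).

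For part (b) I take the generators adapted to type $A_n$: $w_n:=u_n$ and $w_i:=(\op{ad}u_i)^2 w_{i+1}$ for $i=n-1,\dots,1$. A computation on the top graded component gives $\pi_1\phi(w_i)=2^{\,n-i}v_i^2$, so the $w_i$ are independent and project to the diagonal vectors $v_i^2$. When all $b_i\neq0$ the $\mathfrak g_0$-components of the $\phi(w_i)$ are nonzero, so the $\mathfrak g_1$-parts of the brackets $[\phi(w_i),\phi(w_j)]$—which come only from the cross terms $[v_i^2,\mathfrak g_0(\phi w_j)]+[\mathfrak g_0(\phi w_i),v_j^2]$ since $[\mathfrak g_1,\mathfrak g_1]=0$—produce the off-diagonal $v_iv_{i+1}$ and, on iteration, all of $S^2V$; with the dimension count this will show that $w_1,\dots,w_n$ generate $J$. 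It then remains to verify the edge relations of Theorem~\ref{th:flat electric type A}: $[w_i,w_j]=0$ for $|i-j|\ge2$ and $(\op{ad}w_i)^2w_j=-2b'_{\min(i,j)}w_i$ for $|i-j|=1$, with explicit parameters $b'_i$ (monomials in the $b_k$ pinned down by matching weights; $b'_1=-2b_1^2$ when $n=2$). Granting these, Theorem~\ref{th:flat electric type A} yields a surjection $sl_{n+1}^{({\bf b'})}\twoheadrightarrow J$ which is an isomorphism since both sides have dimension $\binom{n+1}{2}$, and combining with (a) gives $sp^{({\bf b})}_{2n}\cong sl_n^{(b_1,\dots,b_{n-2})}\ltimes sl_{n+1}^{({\bf b'})}$.

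The main obstacle is this last verification: because the electrical deformation breaks the $\alpha_n$-grading, the relations cannot be read off from leading terms, and one must control the $\mathfrak g_0$- and $\mathfrak g_{-1}$-components of each $\phi(w_i)$. I would compute $\phi(w_i)$ degree by degree inside $sp_{2n}$—its $\mathfrak g_0$-part is an iterated $\mathfrak{gl}_n$-bracket of the $\phi(u_k)=E_{k,k+1}+b_{k-1}E_{k,k-1}$ applied to $b_{n-1}E_{n,n-1}$, and its $\mathfrak g_{-1}$-part is analogous—and then use the $L$-equivariance of the bracket on $J$ to reduce all the identities to a handful of base computations near the node $n$, exactly as in the $n=2$ case. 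The far-commutativity $[w_i,w_j]=0$ for $|i-j|\ge2$ should follow because for such $i,j$ the matrix entries supporting $\phi(w_i)$ and $\phi(w_j)$ are disjoint enough for both the $\mathfrak{gl}_n$-brackets and the $\mathfrak g_1\times\mathfrak g_{-1}$ cross terms to cancel, and the hypothesis $b_i\neq0$ is what makes the $\mathfrak g_0$-components—and hence the nontrivial bracket on $J$—survive.
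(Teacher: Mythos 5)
Your part (a) is essentially the paper's own argument: the paper likewise passes through the embedding of Theorem~\ref{th:conical electrical2}(b), uses the $sl_n$-module splitting $sp_{2n}\cong V_1\oplus sl_n\oplus V_2$ with $V_1=S^2V_{\omega_1}$, and identifies $J$ with $V_1$ via the projection. Your retraction $\psi$ (killing $u_n$, which visibly preserves the relations \eqref{eq:deformed Serre b} at the edge $\{n-1,n\}$ because the relation with $a_{n-1,n}=-2$ carries no deformation term) is a clean way to get the semidirect product and $L\cong sl_n^{(b_1,\ldots,b_{n-2})}$ directly from presentations, and your triangularity argument for surjectivity of $\pi_1\circ\phi|_J$ is sound; this is a presentational improvement over the paper's ``a direct calculation shows,'' not a different route. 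Your elements $w_i$ coincide with the paper's $u'_i$, and your intended strategy for (b) — verify the $A_n$ edge relations of Theorem~\ref{th:flat electric type A} and conclude by flatness plus the count $\dim J=\binom{n+1}{2}$ — is exactly the paper's.

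The genuine gap is in part (b): the verification of those edge relations, which you yourself call ``the main obstacle,'' is never carried out, and that verification \emph{is} part (b) — it is what determines ${\bf b'}$ and produces the surjection $sl_{n+1}^{({\bf b'})}\twoheadrightarrow J$. Moreover, the one anchor value you state is incorrect. For $n=2$ one has $\phi(u_1)=e_1$ and $\phi(u_2)=e_2+b_1f_1-\tfrac12 b_1^2[f_1,[f_1,f_2]]$, hence
\begin{equation*}
\phi(w_1)=[e_1,[e_1,\phi(u_2)]]=e_{112}-2b_1e_1-2b_1^2f_2,\qquad e_{112}:=[e_1,[e_1,e_2]],
\end{equation*}
and a direct computation in $sp_4$ gives
\begin{equation*}
[\phi(w_1),\phi(w_2)]=-4b_1e_{12}-4b_1^2h_1+4b_1^3[f_1,f_2],\qquad
(\op{ad}\phi(w_i))^2(\phi(w_j))=16\,b_1^2\,\phi(w_i)\quad(i\ne j),
\end{equation*}
so $-2b'_1=16b_1^2$, i.e.\ $b'_1=-8b_1^2$, not $-2b_1^2$. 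This matches the paper's closed formula $b'_k=-2^{2(n-k)+1}\prod_{i=k}^{n-1}b_i^2$ and its $sp_6$ example (${\bf b'}=\{-32b_1^2b_2^2,\,-8b_2^2\}$). The discrepancy shows that weight/grading bookkeeping pins down $b'_k$ only as a monomial in the $b_i$'s, not its coefficient; the coefficient comes precisely from the lower-order ($\mathfrak{g}_0$ and $\mathfrak{g}_{-1}$) components you propose to ``control degree by degree'' but never compute. Until those relations — and the generation of $J$ by $w_1,\ldots,w_n$, which you also only sketch — are actually established, part (b) remains a plan rather than a proof.
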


\begin{remark} \label{sum} It is well-known that $\hh\ltimes \gg\cong \hh\oplus \gg$ for any Lie algebra $\gg$ and its subalgebra $\hh$ under the right adjoint action of $\hh$ on $\gg$ (namely, the diagonal copy 
of $\hh$ in $\hh\ltimes \gg$ commutes with $(0,\gg)$).

Moreover, applying this to $\gg=sp_n$, $\hh=sp_{n-1}$, Theorems \ref{th:form}  and \ref{th:su}(b) produce, after an appropriate localization, an isomorphism
$$sp^{({\bf b})}_{2n} \cong sl_{n}^{(b_1,\ldots,b_{n-2})}\oplus sl_{n+1}^{(b_1,\ldots b_{n-1})}
$$
recovering \cite[Theorem\,2.3.1]{su}\footnote{The electric algebras of type $B_n$  in~\cite{su} correspond to the type $C_n$ in our approach.}.
    
\end{remark}
We  prove Theorem \ref{th:su} in Section \ref{subsec:Proof of Theorem su}.

\begin{example} The Chevalley generators of $sp_{6}$ are given in terms of the folding of the standard generators for $sl_6$ under the flip $i\mapsto 6-i$:
$$\tilde e_1 = e_1+e_5,\,
\tilde e_2 =e_2+e_4,\,
\tilde e_3 =e_3,\,
\tilde f_1 = f_1+f_5,\,
\tilde f_2 =f_2+f_4,\,
\tilde f_3 =f_3.$$
The flip-invariant copy of $sl^{(\bf b)}_6$ is generated by the elements (Theorem \ref{th:edge electric sl_n general} with $k=3$)
$$
\tilde u_1 = e_1,\,
\tilde u_2 = e_2 + b_1f_1,\,
\tilde u_3 = \tilde e_3 + b_2\tilde f_2 - b_2^2 [\tilde f_2, [\tilde f_2, \tilde f_3]]/2$$
$$
\tilde u_4 = e_4 + b_1f_5,\,
\tilde u_5 = e_5 \ .
$$
The generators of ${sp}^{(\bf b)}_{6}$ are
$u_1 = \tilde u_1+\tilde u_5,\,
u_2 = \tilde u_2 + \tilde u_4,\,
u_3 = \tilde u_3
$.
Introduce the elements of ${sp}^{(\bf b)}_{6}$:
$w_3=u_3,\,w_2 = [u_2, [u_2, u_3]],\,
w_1 = [u_1, [u_1, w_2]]$.
These elements, on the one hand, generate the ideal $J$ and, on the other hand, a copy of $sl_{4}^{({\bf b'})}$ where ${\bf b'}=\{-32b^2_1b^2_2\,,-8b^2_2\}$.
This shows a splitting of $sp_{6}^{({\bf b})}$ into a semidirect product as in Theorem \ref{th:su}(c).

Moreover, as Remark \ref{sum} claims
there is a copy of $sl_{3}^{({\bf b'})\backslash -8b^2_2}$ that commutes with the above copy of $sl_{4}^{({\bf b'})}$. It is generated by the elements 
\[v_1 = -8b_1b_2u_1+w_1,\,v_2 = 4b_2u_2+w_2\] 
After the appropriate localization
\[sp^{(\bf b)}_{6}\cong sl_{2}^{(b_1)}\oplus sl_{3}^{({b_1,b_2})}\]
where the generators of the summands are
\[\{u_1-\frac{w_1}{8b_1b_2},\,u_2+\frac{w_2}{4b_2}\}\,\text{and}\,\{w_3,\,-\frac{w_2}{8b_2},\,\frac{w_1}{4b_1b_2}\}\]
respectively.

\end{example}

\section{Other vertex and edge models of electrical Lie algebras}
\label{Sect:other electrical homomorphisms}

In this section, we generalize Theorem \ref{th:conical electrical}, Example \ref{starlike} and Theorems \ref{th:conical electrical2},  \ref{th:conical electrical2 non-conjugate} to other semisimple and Kac-Moody algebras.

To any (generalized) Cartan matrix $A=(a_{ij},i,j\in I)$ we assign its graph $\Gamma(A)$ on the vertex set $I$ such that $(ij)$ is an edge iff $a_{ij}<0$.

We say that a  rooted tree is {\it conical} if the only possible branch is at the root (which we always denote by $0$).

For example, a linear graph with the set of vertex $[-m,n]$, $m,n\ge 0$ is a conical tree.  Also, all Dynkin diagrams of types $ADE$ are conical rooted trees.

Any rooted tree $\Gamma$ with root $0$ is naturally a layered partial order $\prec$ on $\Gamma$ with the maximal element $0$ and whose minimal elements are the leaves. Furthermore, for any $i\in \Gamma\setminus \{0\}$ denote by $i^+\in I$ the parent of $i$ in $\prec$, that is, the smallest element $j$ such that $i\prec j$. 

Likewise, if $\Gamma$ is a conical tree, for any non-leaf $i\in \Gamma\setminus \{0\}$ denote by $i^-\in I$ the only son of $i$ in $\prec$.

\begin{theorem} 
\label{th:conical electric22}
Suppose that $\Gamma(A)$ is a conical tree with the root $0$ and  $a_{ij}\in \{0,-1\}$ for all $i,j\in I$ such that $j\ne 0$. Then $(Ad~g_{\bf a})(u_i)=$
$$\begin{cases}
e_i+b_{i^-}f_{i^-}  & \text{if $i\ne 0$ }\\
e_0-a_0{\bf f}-\frac{a_0^2}{2}[{\bf f},[{\bf f},f_0]]-\sum\limits_{k\ge 3} \frac{a_0^2}{k!}(ad~{\bf f}+a_0[f_0,{\bf f}])^{k-1}([{\bf f},[{\bf f},f_0]]) & \text{if $i=0$}
\end{cases}$$
for $i\in \Gamma(A)$, where we abbreviated $b_{i^-}:=-a_ia_{i^+}$ for $i\in I$ (with the convention $b_{i^-}=0$ for any leaf $i$ of $\Gamma$), ${\bf f}:=\sum\limits_{j\in I:j^+=0} a_j f_j$
and $g_{\bf a}=e^{a_0f_0}\prod\limits_{i\ne 0} e^{a_if_i}$, where the product is decreasing (i.e., $i^+$ always precedes $i$). 

\end{theorem}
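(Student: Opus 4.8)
The plan is to compute the conjugation $(\mathrm{Ad}\, g_{\bf a})(u_i)$ directly, exploiting the layered structure of the conical tree. Recall $u_i = e_i + a_i[e_i,f_i] - a_i^2 f_i$ and $g_{\bf a} = e^{a_0 f_0}\prod_{i\neq 0} e^{a_i f_i}$ with the product ordered so that parents precede children. The central observation is that the $f_i$ mostly commute: since $\Gamma(A)$ is a tree, $[f_i,f_j]=0$ unless $i,j$ are adjacent (i.e. one is the parent of the other), and moreover $[e_i,f_j]=0$ for $i\neq j$. First I would establish, using these commutation facts, that when computing $(\mathrm{Ad}\,e^{a_j f_j})(u_i)$ the only generators $e^{a_j f_j}$ that act nontrivially on $u_i$ are those with $j\in\{i, i^+, \text{children of }i\}$. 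This localizes the computation to the star around each vertex.

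For a non-root vertex $i$, I expect the computation to be essentially the rank-$2$ (type $A_2$ or $A_1\times A_1$) calculation already implicit in Theorem~\ref{th:conical electrical}. The factor $e^{a_i f_i}$ conjugates the $sl_2$-triple $(e_i,[e_i,f_i],f_i)$ so as to kill the $f_i$ and $[e_i,f_i]$ terms in $u_i$, turning $u_i$ into $e_i$ modulo a contribution from the parent. The parent factor $e^{a_{i^+}f_{i^+}}$ (applied earlier, hence outermost) then contributes the term $b_{i^-}f_{i^-}$; here the sign and product structure $b_{i^-}=-a_i a_{i^+}$ emerge exactly as in the type-$A$ case via $[e_i,f_{i^+}]$-type brackets combined with $[e_{i^+},f_i]$. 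The hypothesis $a_{ij}\in\{0,-1\}$ for $j\neq 0$ is what guarantees these are genuinely $A_2$-type (simply-laced) local configurations, so no higher brackets survive and the formula $e_i+b_{i^-}f_{i^-}$ is exact with no correction terms. I would verify the ordering convention is precisely what makes the parent's contribution land correctly and the children's contributions cancel.

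The hard part, and the main obstacle, will be the root vertex $i=0$, where the branching occurs and the series in $k$ appears. Here $u_0 = e_0 + a_0[e_0,f_0] - a_0^2 f_0$ is conjugated first by $e^{a_0 f_0}$ (innermost) and then by all the children's factors $e^{a_j f_j}$ for $j^+=0$, which no longer commute with the result. The plan is to treat $(\mathrm{Ad}\,e^{a_0 f_0})(u_0)$ first, then apply $\mathrm{Ad}$ of the product of child-factors, collecting the effect into the single element ${\bf f}=\sum_{j:j^+=0} a_j f_j$. The point is that $\mathrm{Ad}$ of $\prod_j e^{a_j f_j}$ acts as $e^{\mathrm{ad}\,{\bf f}}$ because the $f_j$ for distinct children $j,j'$ commute (they are non-adjacent leaves-of-the-root in the tree), so the product of exponentials is $e^{{\bf f}}$. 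Expanding $e^{\mathrm{ad}\,{\bf f}}$ applied to the three-term result of the first conjugation, and carefully tracking how $[f_0,{\bf f}]$ enters, should reproduce the stated series $e_0 - a_0{\bf f} - \tfrac{a_0^2}{2}[{\bf f},[{\bf f},f_0]] - \sum_{k\geq 3}\tfrac{a_0^2}{k!}(\mathrm{ad}\,{\bf f}+a_0[f_0,{\bf f}])^{k-1}([{\bf f},[{\bf f},f_0]])$.

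\textbf{The key technical lemma} to isolate will be a closed form for $e^{\mathrm{ad}\,{\bf f}}$ acting on $e_0$, $[e_0,f_0]$, and $f_0$ separately, using that $[{\bf f},e_0]=-[e_0,{\bf f}]$ is a sum of $[e_0,f_j]$ (each an element of the Cartan-adjacent root space), that $[{\bf f},[{\bf f},e_0]]$ produces the $[{\bf f},[{\bf f},f_0]]$-type terms via the relation $[e_0,f_j]$ pairing back against $f_0$, and that higher brackets organize into the shifted operator $\mathrm{ad}\,{\bf f}+a_0[f_0,{\bf f}]$. I expect the appearance of this shifted generator to be the subtle combinatorial point: it reflects that after the first conjugation the ``effective'' nilpotent direction is not ${\bf f}$ alone but is corrected by the $a_0[f_0,{\bf f}]$ term coming from the $[e_0,f_0]$ summand of $u_0$. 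Once this operator identity is verified by induction on $k$ (comparing the coefficient of each nested bracket on both sides), the root formula follows, completing the proof.
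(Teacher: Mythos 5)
Your high-level strategy---localize the computation using the tree structure, reduce non-root vertices to the type-$A$ computation of Theorem \ref{th:conical electrical}, and collapse the root's children into a single exponential $e^{\bf f}$---is exactly the paper's: its proof factors $g_{\bf a}=g'g''$ for $i\ne 0$ and reduces the root case to $(\mathrm{Ad}\,e^{a_0f_0}e^{\bf f})(u_0)$ (Proposition \ref{pr:conical electric22}). However, your proposal reverses the order in which the factors of $g_{\bf a}$ act, and this is fatal. Since $\mathrm{Ad}(AB)=\mathrm{Ad}(A)\circ \mathrm{Ad}(B)$, the \emph{rightmost} factors of $g_{\bf a}=e^{a_0f_0}\prod_{i\ne0}e^{a_if_i}$ act first; the root factor $e^{a_0f_0}$ is leftmost, hence outermost, applied \emph{last}. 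You assert the opposite (``conjugated first by $e^{a_0f_0}$ (innermost) and then by all the children's factors''). With your order the root computation terminates instantly and incorrectly: $u_0=(\mathrm{Ad}\,e^{-a_0f_0})(e_0)$, so $(\mathrm{Ad}\,e^{a_0f_0})(u_0)=e_0$, and then $(\mathrm{Ad}\,e^{\bf f})(e_0)=e_0$ because $[f_j,e_0]=0$ for all $j\ne0$. You would obtain just $e_0$, with no series at all. The correct order is $(\mathrm{Ad}\,e^{a_0f_0})\bigl((\mathrm{Ad}\,e^{\bf f})(u_0)\bigr)$: first $\mathrm{Ad}\,e^{\bf f}$ acts nontrivially on the $h_0$ and $f_0$ parts of $u_0$, and only afterwards does conjugation by $e^{a_0f_0}$ replace ${\bf f}$ by $\hat{\bf f}=(\mathrm{Ad}\,e^{a_0f_0})({\bf f})={\bf f}+a_0[f_0,{\bf f}]$; this is the true origin of the ``shifted operator'' you flag as the subtle point, and it cannot arise in your setup.

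The same reversal corrupts your non-root case. The term $b_{i^-}f_{i^-}$ is contributed by the \emph{child's} factor $e^{a_{i^-}f_{i^-}}$, which acts before $e^{a_if_i}$ and interacts with it to produce $-a_{i^-}a_i f_{i^-}$ via $[[f_i,f_{i^-}],e_i]=-f_{i^-}$; the parent's factor acts last and simply fixes $e_i+b_{i^-}f_{i^-}$, since $[f_{i^+},e_i]=[f_{i^+},f_{i^-}]=0$. Your proposed mechanism---the parent factor contributing ``via $[e_i,f_{i^+}]$-type brackets combined with $[e_{i^+},f_i]$''---is vacuous, because $[e_i,f_j]=0$ for all $i\ne j$ in any Kac--Moody algebra; the same identity invalidates your key-lemma claim that $[{\bf f},e_0]$ is a sum of nonzero elements $[e_0,f_j]$ (it is zero, which is precisely why $e_0$ passes through $\mathrm{Ad}\,e^{\bf f}$ unchanged). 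So while the skeleton of your plan coincides with the paper's proof, executing it as written would fail in both cases; the fix is to redo the bookkeeping with the factors applied right-to-left, after which the computation of Proposition \ref{pr:conical electric22} and the simplifications using $a_{0i}=-1$ go through.
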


\begin{proof} We need the following 
\begin{proposition}

\label{pr:conical electric22}
Suppose that $\Gamma(A)$ is a conical tree with the root $0$ and such that $a_{ij}\in \{0,-1\}$ for all $i,j\in I$ such that  $0\notin \{i,j\}$. Then  
$$(Ad~g_{\bf a})(u_i)= \begin{cases}e_i+
b_{i^-}f_{i^-}  & \text{if $i\ne 0$ }\\
e_0+a_0[\hat {\bf f},h_0+a_0f_0]-a_0^2\sum\limits_{k\ge 2} \frac{1}{k!}(ad~\hat {\bf f})^k(f_0) & \text{if $i=0$}
\end{cases}$$
for $i\in \Gamma(A)$, where we abbreviated $b_{i^-}:=-a_ia_{i^+}$ for $i\in I$ (with the convention $b_{i^-}=0$ for any leaf $i$), ${\bf f}:=\sum\limits_{j\in I:j^+=0} a_j f_j$
and $g_{\bf a}=e^{a_0f_0}\prod\limits_{i\ne 0} e^{a_if_i}$, where the product is decreasing (i.e., $i^+$ always precedes $i$), and $\hat {\bf f}=(Ad~e^{a_0f_0})({\bf f})=\sum\limits_{k\ge 0} \frac{a_0^k}{k!}(ad~f_0)^k({\bf f})$.

\end{proposition}

\begin{proof}

Indeed, if $i\ne 0$, then $g_{\bf a}=g' g''$ where $g'=e^{a_0f_0}\prod\limits_{i\in I'} e^{a_if_i}$ and $g''=\prod\limits_{j\in I''} e^{a_jf_j}$, both products decreasing where $I''=\{j:j\preceq i\}$, $I'=(I\setminus \{0\})\setminus I''$.

As in the proof of Theorem \ref{th:conical electrical}, 
$(Ad~g'')(u_i)=e_i+b_{i^-}f_{i^-}$ hence
$$(Ad~g_{\bf a})(u_i)=(Ad~g')(e_i+b_{i^-}f_{i^-})=e_i+b_{i^-}f_{i^-}$$ because both $e_i$ and $f_{i^-}$ are fixed by $Ad~g'$.

This proves the first case.

Now let $i=0$. Then 
$$(Ad~g_{\bf a})(u_0)=(Ad~e^{a_0f_0}\prod\limits_{i\ne 0} e^{a_if_i})(u_0)$$
$$=(Ad~e^{a_0f_0}\prod\limits_{i\in I:i^+=0} e^{a_if_i})(u_0)=(Ad~e^{a_0f_0} e^{\bf f})(u_0)$$
because $[f_i,f_j]=0$ if $i^+=j^+=0$. Furthermore, 
$$(Ad~e^{\bf f})(u_0)=(Ad~e^{\bf f})(e_0+a_0h_0-a_0^2f_0)=e_0+a_0(Ad~e^{\bf f})(h_0)-a_0^2(Ad~e^{\bf f})(f_0)$$
$$=u_0+a_0[{\bf f},h_0]-a_0^2((Ad~e^{\bf f})(f_0)-f_0)$$
because 
$[{\bf f},h_0]=\sum\limits_{i\in I:i^+=0} a_ia_{0i}f_i$.
Therefore, $[{\bf f},[{\bf f},h_0]]=0$.
Finally, 
$$(Ad~g_{\bf a})(u_0)=(Ad~e^{a_0f_0})(u_0+a_0[{\bf f},h_0]-a_0^2((Ad~e^{\bf f})(f_0)-f_0))$$
$$=e_0+a_0[\hat {\bf f},h_0+2a_0f_0]-a_0^2((Ad~e^{\hat {\bf f}})(f_0)-f_0)$$
because $(Ad~e^{a_0f_0})(h_0)=h_0+2a_0f_0$.

This proves the second case.
The proposition is proved.
\end{proof}

Since $a_{0i}=-1$ whenever $i^+=0$ hence $[f_0,[f_0,{\bf f}]]=0$, we obtain
$$\hat {\bf f}={\bf f}+a_0[f_0,{\bf f}]\ .$$

Also, $[{\bf f},h_0]=-{\bf f}$ and $[[f_0,{\bf f}],h_0]=[f_0,{\bf f}]$, therefore, 
$$[\hat {\bf f},h_0+a_0f_0]=[{\bf f}+a_0[f_0,{\bf f}],h_0+a_0f_0]=-{\bf f}\ .$$

Finally, note that 
$$[\hat {\bf f},f_0]=[{\bf f}+a_0[f_0,{\bf f}],f_0]=[{\bf f},f_0]$$
and 
$$[\hat {\bf f},[{\bf f},f_0]]=[{\bf f}+a_0[f_0,{\bf f}],[{\bf f},f_0]]]=[{\bf f},[{\bf f},f_0]]]$$

The theorem is proved.
\end{proof}

Theorem \ref{th:conical electric22} covers the cases when $\gg$ is of types $E_6,E_7,E_8$, and $F_4$.

Viewing the Dynkin diagram of $sl_n$ as a conical tree with a ``root" $0=k\in\{1,\ldots,n-1\}$ we obtain the following corollary of Theorem \ref{th:conical electric22}, which generalizes Theorem \ref{th:conical electrical}.

\begin{theorem}
 \label{th:edge electric sl_n general}   
For any $k\in\{1,\ldots,n-1\}$ the assignments 
$$u_i\mapsto 
\begin{cases}
e_i+b_{i-1}f_{i-1}  & \text{if $i<k$}\\
e_i+b_if_{i+1}  & \text{if $i>k$}\\
e_k+b_{k-1}f_{k-1}+b_kf_{k+1}+b_{k-1}b_k[f_{k-1},[f_{k+1},f_k]] & \text{if $i=k$}
\end{cases}
$$
define an injective homomorphism $sl_n^{({\bf b})}\hookrightarrow sl_n$ (where  $b_i:=-a_ia_{i+1}$).

\end{theorem}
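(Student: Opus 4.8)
The plan is to apply Theorem \ref{th:conical electric22} to the Dynkin diagram of $sl_n$, which is the linear chain $1 - 2 - \cdots - (n-1)$, viewed as a conical rooted tree with root $0 := k$. Branching occurs only at $k$ (whose two children are $k-1$ and $k+1$), so the tree is conical, and since $sl_n$ is simply laced we have $a_{ij} \in \{0, -1\}$ for all $i, j$ with $j \ne 0$; thus the hypotheses of Theorem \ref{th:conical electric22} hold. For a vertex $i \ne k$ the unique son is $i^- = i - 1$ on the left arm ($i < k$) and $i^- = i + 1$ on the right arm ($i > k$), and $b_{i^-} = -a_i a_{i^+}$ equals $b_{i-1}$ and $b_i$ respectively, so the formula $(Ad~g_{\mathbf{a}})(u_i) = e_i + b_{i^-} f_{i^-}$ reproduces the first two cases of the assertion verbatim.

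The main step is to simplify the root case $i = k$ of Theorem \ref{th:conical electric22}. Here $\mathbf{f} = a_{k-1} f_{k-1} + a_{k+1} f_{k+1}$, and the Serre relations $[f_{k\pm 1},[f_{k\pm 1}, f_k]] = 0$ together with $[f_{k-1}, f_{k+1}] = 0$ collapse the quadratic term to $[\mathbf{f},[\mathbf{f}, f_0]] = 2 a_{k-1} a_{k+1} [f_{k-1},[f_{k+1}, f_k]]$ (the two surviving summands coincide by the Jacobi identity). The decisive observation is that in type $A$ every root of $sl_n$ has all simple-root coordinates in $\{0, 1\}$; consequently $[f_{k-1},[f_{k+1}, f_k]]$, which has weight $-(\alpha_{k-1} + \alpha_k + \alpha_{k+1})$, is annihilated by any further application of $ad~\mathbf{f}$ or of $ad([f_0, \mathbf{f}])$, since these would produce weights with a coordinate equal to $2$. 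Hence every term with $m \ge 3$ in the infinite sum of Theorem \ref{th:conical electric22} vanishes in $sl_n$, and the root image collapses to a finite expression which, after substituting $b_{k-1} = -a_{k-1} a_k$ and $b_k = -a_k a_{k+1}$, agrees with the stated image of $u_k$. Thus for every $\mathbf{b}$ of the realizable form $b_i = -a_i a_{i+1}$ the claimed assignments coincide with $Ad~g_{\mathbf{a}}$ applied to the vertex generators.

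It remains to upgrade this to an arbitrary $\mathbf{b} \in \CC^{n-1}$ (including tuples not of the form $-a_i a_{i+1}$). I would argue by polynomiality. For realizable $\mathbf{b}$ the images $U_i := Ad~g_{\mathbf{a}}(u_i)$ satisfy the deformed Serre relations of Theorem \ref{th:canonical electric}; since in type $A$ one has $a_{ij} = a_{ji} = -1$ for adjacent indices, these read exactly as the defining relations $[U_i, U_j] = 0$ for $|i - j| > 1$ and $[U_i,[U_i, U_j]] = -2 b_{\min(i,j)} U_i$ for $|i - j| = 1$ of $sl_n^{(\mathbf{b})}$ from Theorem \ref{th:flat electric type A}. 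Both sides of each relation are polynomial in $\mathbf{b}$ and valued in the finite-dimensional space $sl_n$, and the realizable tuples include the Zariski-dense open set $\{\prod_i b_i \ne 0\}$ (solve $a_1 = 1$, $a_{i+1} = -b_i / a_i$ recursively). Hence the relations hold identically in $\mathbf{b}$, so $u_i \mapsto U_i$ defines a homomorphism $sl_n^{(\mathbf{b})} \to sl_n$ for every $\mathbf{b}$.

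Finally, for injectivity I would use a leading-term argument valid for all $\mathbf{b}$. Grade $sl_n$ by height ($\deg e_i = 1$, $\deg f_i = -1$); then each $U_i = e_i + (\text{terms of strictly smaller height})$, so the image of any length-$\ell$ iterated bracket of the $u_i$ has height-$\ell$ leading term equal to the corresponding iterated bracket of the $e_i$. Filtering $sl_n^{(\mathbf{b})}$ by bracket length, the associated graded map sends the class of such a bracket to its $e$-analogue; under the identification $\operatorname{gr} sl_n^{(\mathbf{b})} \cong \mathfrak{n}$ furnished by the flatness statement of Theorem \ref{th:flat electric}, this map is the identity of $\mathfrak{n}$, hence an isomorphism. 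A filtered map with injective associated graded is injective, so the homomorphism is an embedding. The only delicate point in the whole argument is the truncation of the infinite sum in the $i = k$ case, which is precisely where the simply-laced, coefficient-$\{0,1\}$ structure of type $A$ is used.
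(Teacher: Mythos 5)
Your strategy is, in outline, the same as the paper's (Section \ref{subsec:Proof of Theorem conical electrical2}): the paper likewise (i) specializes Theorem \ref{th:conical electric22} to the $A_{n-1}$ diagram viewed as a conical tree rooted at $k$ (its Lemma \ref{le:b-relations}), (ii) passes from realizable parameters $b_i=-a_ia_{i+1}$ to arbitrary $\mathbf b$ by a polynomiality argument --- the paper phrases your Zariski-density step as the observation that the $b_{ij}$ generate a \emph{free} polynomial subring $\hat R=\CC[b_{ij}]$ of $\CC[a_i]$, so identities over $\hat R$ specialize to any coefficient algebra --- and (iii) deduces injectivity from a filtered/graded argument in which the generators have leading term $e_i$ and the graded relations are the Serre relations presenting $\nn$ (Proposition \ref{pr:flat filtred algebras}, applied at the level of enveloping algebras rather than, as you do, at the Lie level with the height grading).

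There is, however, a genuine flaw in your injectivity step: you invoke ``the identification $\operatorname{gr} sl_n^{(\mathbf b)}\cong\nn$ furnished by the flatness statement of Theorem \ref{th:flat electric}''. Theorem \ref{th:flat electric} covers only the vertex algebras $\gg^{(\mathbf a)}$, i.e.\ edge algebras whose parameters are realizable as $b_i=-a_ia_{i+1}$. For exactly the tuples where the present theorem has content beyond Theorem \ref{th:conical electrical} (e.g.\ $n=5$, $b_1\neq0$, $b_2=0$, $b_3\neq0$, which the paper notes is not realizable), flatness of the abstract edge algebra $sl_n^{(\mathbf b)}$ is not an available input: it is part of what is being proved, since the paper derives Theorem \ref{th:flat electric type A} as a corollary of this theorem, not the other way around. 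The repair is short and is precisely what Proposition \ref{pr:flat filtred algebras} encapsulates: a priori one only has a canonical \emph{surjection} $\nn\twoheadrightarrow\operatorname{gr} sl_n^{(\mathbf b)}$, $e_i\mapsto\bar u_i$, well defined because the top-degree parts of the relations \eqref{eq:deformed Serre b} are the Serre relations; composing it with your $\operatorname{gr}\phi$ gives a graded Lie homomorphism $\nn\to sl_n$ sending $e_i\mapsto e_i$, i.e.\ the inclusion, which is injective --- hence the surjection and $\operatorname{gr}\phi$ are both isomorphisms, yielding injectivity of $\phi$ and the flatness of $sl_n^{(\mathbf b)}$ simultaneously and without circularity. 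Two smaller cautions: your reading of the coefficient $b_{i^-}$ silently corrects an index slip in the statement of Theorem \ref{th:conical electric22} (the coefficient of $f_{i^-}$ is $-a_{i^-}a_i$, as the proof of Theorem \ref{th:conical electrical} shows, not $-a_ia_{i^+}$); and if you actually carry out the collapse at the root, the quadratic term comes out as $-b_{k-1}b_k[f_{k-1},[f_{k+1},f_k]]$, so your unverified claim of literal agreement with the displayed formula holds only up to a sign (equivalently, up to reordering the inner bracket), a discrepancy that already exists between Theorem \ref{th:conical electric22} and Example \ref{starlike} in the paper itself.
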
 

We prove Theorem \ref{th:edge electric sl_n general} in Section \ref{subsec:Proof of Theorem conical electrical2}.

The following is an immediate corollary of Theorem \ref{th:conical electric22}.

\begin{corollary}
\label{cor:edge electric conical star}    
Let $\Gamma(A)$ be a conical star tree (i.e., every non-root is a leaf) with $a_{i0}=a_{0i}=-1$ for all $i\in I\setminus \{0\}$. Then 
the assignments
$$u_i\mapsto  
e_i-a_0\delta_{i,0}({\bf f}-\frac{a_0}{2}[{\bf f},[{\bf f},f_0]]-a_0\sum\limits_{k\ge 3} \frac{1}{k!}(ad~{\bf f}+a_0[f_0,{\bf f}])^{k-1}([{\bf f},[{\bf f},f_0]])) \ ,$$
where  ${\bf f}:=\sum\limits_{j\ne 0} a_j f_j$, define an injective homomorphism $\gg^{({\bf a})}\hookrightarrow \gg$.

\end{corollary}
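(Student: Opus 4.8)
The plan is to deduce Corollary \ref{cor:edge electric conical star} directly from Theorem \ref{th:conical electric22} by specializing to the case where $\Gamma(A)$ is a conical star tree. In a star tree, every vertex $i \neq 0$ is a leaf, so the partial order $\prec$ has the root $0$ as its unique maximal element and all other vertices as minimal elements. First I would observe that the specialization forces two simplifications in the output formula of the theorem. For each $i \neq 0$, since $i$ is a leaf, the convention $b_{i^-} = 0$ applies, so the first case of the theorem reads $(Ad~g_{\bf a})(u_i) = e_i + 0 = e_i$; this matches the corollary, where the $\delta_{i,0}$ factor kills the correction term for $i \neq 0$. Second, since every non-root son $j$ satisfies $j^+ = 0$, the distinguished element $\mathbf{f} = \sum_{j \in I : j^+ = 0} a_j f_j$ simplifies to $\mathbf{f} = \sum_{j \neq 0} a_j f_j$, exactly the expression stated in the corollary.

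Next I would check that the $i = 0$ case of the theorem's formula coincides with the $\delta_{0,0} = 1$ branch of the corollary's assignment. The theorem gives
$$
(Ad~g_{\bf a})(u_0) = e_0 - a_0 \mathbf{f} - \tfrac{a_0^2}{2}[\mathbf{f},[\mathbf{f},f_0]] - \sum_{k \geq 3} \tfrac{a_0^2}{k!}(ad~\mathbf{f} + a_0[f_0,\mathbf{f}])^{k-1}([\mathbf{f},[\mathbf{f},f_0]]),
$$
while the corollary, after factoring out $-a_0$, gives
$$
e_0 - a_0\Bigl(\mathbf{f} - \tfrac{a_0}{2}[\mathbf{f},[\mathbf{f},f_0]] - a_0\sum_{k \geq 3}\tfrac{1}{k!}(ad~\mathbf{f} + a_0[f_0,\mathbf{f}])^{k-1}([\mathbf{f},[\mathbf{f},f_0]])\Bigr).
$$
Distributing the $-a_0$ through the parenthesis reproduces the theorem's expression term by term, so the two agree; this is a purely formal comparison requiring no new computation. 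The hypothesis $a_{i0} = a_{0i} = -1$ for all $i \neq 0$ is precisely what is needed so that $\Gamma(A)$ is genuinely a conical tree satisfying the theorem's condition $a_{ij} \in \{0,-1\}$, and it also guarantees the identity $[f_0,[f_0,\mathbf{f}]] = 0$ used in deriving $\hat{\mathbf{f}} = \mathbf{f} + a_0[f_0,\mathbf{f}]$ inside the proof of the theorem.

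Finally I would address injectivity. The map $\gg^{({\bf a})} \hookrightarrow \gg$ is realized as conjugation by the invertible group element $g_{\bf a} = e^{a_0 f_0}\prod_{i \neq 0} e^{a_i f_i} \in G$, and conjugation by any element of the adjoint group is a Lie algebra automorphism of $\gg$, hence injective on any subalgebra. Since $\gg^{({\bf a})}$ is by definition the subalgebra generated by the $u_i$, the restriction of $Ad~g_{\bf a}$ to $\gg^{({\bf a})}$ is an isomorphism onto its image, which is the subalgebra generated by the elements displayed in the corollary. This establishes the injective homomorphism claimed.

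The argument is essentially a direct substitution, so I do not anticipate any serious obstacle; the only point demanding care is verifying that the star condition correctly collapses the defining data of Theorem \ref{th:conical electric22}—in particular that $\mathbf{f}$ ranges over all non-root vertices rather than only the immediate sons (which are here identical) and that the leaf convention zeroes out every $b_{i^-}$ for $i \neq 0$.
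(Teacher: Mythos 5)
Your overall strategy is exactly the paper's own: Corollary \ref{cor:edge electric conical star} is meant to be the specialization of Theorem \ref{th:conical electric22} to a star tree, and your handling of the $i\ne 0$ case (every non-root is a leaf, so $b_{i^-}=0$ and $u_i\mapsto e_i$), of the identification ${\bf f}=\sum_{j\ne 0}a_jf_j$, and of injectivity (the map is the restriction of the automorphism $Ad~g_{\bf a}$ to the subalgebra $\gg^{({\bf a})}\subset\gg$) is all correct.

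The gap is in the $i=0$ case, where your ``purely formal comparison'' asserts an identity that is false. Distributing $-a_0$ through the corollary's parenthesis yields
$$e_0-a_0{\bf f}\,+\,\frac{a_0^2}{2}[{\bf f},[{\bf f},f_0]]\,+\,a_0^2\sum_{k\ge 3}\frac{1}{k!}(ad~{\bf f}+a_0[f_0,{\bf f}])^{k-1}([{\bf f},[{\bf f},f_0]]),$$
whereas Theorem \ref{th:conical electric22} has
$$e_0-a_0{\bf f}\,-\,\frac{a_0^2}{2}[{\bf f},[{\bf f},f_0]]\,-\,a_0^2\sum_{k\ge 3}\frac{1}{k!}(ad~{\bf f}+a_0[f_0,{\bf f}])^{k-1}([{\bf f},[{\bf f},f_0]]);$$
every bracket term flips sign, so the two displays do \emph{not} agree term by term, and one of the two printed statements must carry a sign error. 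A direct check in the smallest nontrivial star settles which: take $\gg=sl_4$ with root $0$ the middle node and leaves $1,3$, so ${\bf f}=a_1f_1+a_3f_3$, $[{\bf f},[{\bf f},f_0]]=2a_1a_3[f_1,[f_3,f_2]]$, and all $k\ge 3$ terms vanish. Computing $Ad(e^{a_0f_0}e^{a_1f_1}e^{a_3f_3})(u_0)$ directly gives $e_0+b_1f_1+b_3f_3-b_1b_3[f_1,[f_3,f_2]]$ (with $b_1=-a_0a_1$, $b_3=-a_0a_3$), i.e.\ the theorem's signs; moreover, only this choice of sign satisfies the defining relation $(ad~u_0)^2(u_1)=-2b_1u_0$ coming from \eqref{eq:deformed Serre}, while the element with the opposite sign of the $[f_1,[f_3,f_2]]$-term fails it, so the assignments printed in the corollary do not even define a homomorphism once $b_1b_3\ne 0$. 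In short: the corollary as stated contains a sign typo (the two inner minus signs should be plus signs, as one sees by honestly factoring $-a_0$ out of the theorem's formula), and your proof, rather than catching this, papers over it by claiming the expressions coincide. The repair is straightforward, but as written your central verification step is wrong, and it is precisely the step that required care.
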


\begin{theorem} 
\label{th:peacock electric}
Suppose that $\Gamma(A)$ is a conical tree with the root $0$, $J_+$ is 
 a set of leaves of $\Gamma(A)$ attached to the $0$,  and  $a_{ij}\in \{0,-1\}$ for all $i,j\in I\setminus J_+$ such that $j\ne 0$ and (i.e., $J_+^+=\{0\}$) such that $a_{i0}=-1$ for all $i\in J_+$. Then
$$(Ad~g_{\bf a})(u_i)=\begin{cases}

e_i+b_if'_i-\frac{b_i^2}{2}[[f_i,f'_i],f'_i]\\
-a_i^2\sum\limits_{k\ge 3}\frac{a_0^k}{k!}(ad~(f'_i+a_i[f_i,f'_i]))^{k-2}([[f_i,f'_i],f'_i])  & \text{if $i\in J_+$}\\
e_0-a_0{\bf f}-\frac{a_0^2}{2}[{\bf f},[{\bf f},f'_0]]\\
-a_0^2\sum\limits_{k\ge 3} \frac{1}{k!}  ((ad~{\bf f}+a_0[f'_0,{\bf f}])^{k-1}([{\bf f},[{\bf f},f'_0]]))
& \text{if $i=0$}\\
e_i+b_{i^-}f_{i^-}  & \text{otherwise}\\
\end{cases}$$
for $i\in \Gamma(A)$, where we abbreviated $b_{i^-}:=-a_ia_{i^+}$ for $i\in I\setminus J_+$, $b_i:=-a_0a_i$, $i\in J_+$ (with the convention $b_{i^-}=0$ for any leaf $i$), ${\bf f}_+:=\sum\limits_{j\in J_+} a_j f_j$,
${\bf f}:=\sum\limits_{j\in I\setminus J_+:j^+=0} a_j f_j$, $f'_0:=\sum\limits_{k\ge 0}\frac{1}{k!} (ad~{\bf f}_+)^k(f_0)$, $f'_i:=\sum\limits_{k\ge 0}\frac{1}{k!}(ad~({\bf f}_+-a_if_i))^k(f_0)$, $i\in J_+$,
and $g_{\bf a}= e^{{\bf f}_+} 
e^{a_0f_0}\prod\limits_{i\in I\setminus (\{0\}\cup J_+)} e^{a_if_i}$, where the product is decreasing (i.e., $i^+$ always precedes $i$). 

\end{theorem}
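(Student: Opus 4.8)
The plan is to exploit the factorization $g_{\bf a}=e^{{\bf f}_+}\,h$, where $h=e^{a_0f_0}\prod_{i\in I\setminus(\{0\}\cup J_+)}e^{a_if_i}$ is exactly the conjugator of Theorem~\ref{th:conical electric22} and Proposition~\ref{pr:conical electric22} applied to the conical subtree $\Gamma(A)\setminus J_+$ rooted at $0$. Thus $Ad~g_{\bf a}=(Ad~e^{{\bf f}_+})\circ(Ad~h)$, and I would treat each $i$ by first reading off $Ad~h(u_i)$ from the earlier results and then applying the single extra factor $Ad~e^{{\bf f}_+}$. Throughout I use that the elements of $J_+$ are leaves attached to $0$, so that $f_j$ ($j\in J_+$) commutes with $f_{j'}$ ($j'\in J_+$), with $e_0$, and with every $e_k,f_k$ for $k$ not adjacent to $0$; crucially, the hypotheses fix $a_{j0}=-1$ for $j\in J_+$ but leave $a_{0j}$ free, so the bond $0$–$j$ may be multiply laced, and this is what produces the higher-order tails in the statement. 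For the generic case $i\in I\setminus(\{0\}\cup J_+)$, which I expect to be routine, Proposition~\ref{pr:conical electric22} already gives $Ad~h(u_i)=e_i+b_{i^-}f_{i^-}$, and $Ad~e^{{\bf f}_+}$ fixes both $e_i$ and $f_{i^-}$ because no $f_j$ with $j\in J_+$ is adjacent to $i$ or to $i^-$; hence $Ad~g_{\bf a}(u_i)=e_i+b_{i^-}f_{i^-}$.

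For $i=0$ I would first apply $Ad~h$. Since the deeper factors of $h$ commute with $u_0$ and the level-one non-$J_+$ factors collapse to $e^{{\bf f}}$, this is precisely the $i=0$ computation behind Theorem~\ref{th:conical electric22}, yielding the expression $X$ of that theorem with ${\bf f}=\sum_{j\in I\setminus J_+,\,j^+=0}a_jf_j$. I then apply $Ad~e^{{\bf f}_+}$. Because $e_0$ and ${\bf f}$ are fixed by $Ad~e^{{\bf f}_+}$ while $f_0\mapsto f'_0=(Ad~e^{{\bf f}_+})(f_0)$, the substitution $f_0\rightsquigarrow f'_0$ propagates through every bracket of $X$ (so $[{\bf f},[{\bf f},f_0]]\mapsto[{\bf f},[{\bf f},f'_0]]$ and $[f_0,{\bf f}]\mapsto[f'_0,{\bf f}]$), giving exactly the claimed formula for $i=0$.

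The main case is $i\in J_+$. Here only $e^{{\bf f}_+}$ and $e^{a_0f_0}$ act non-trivially on $u_i=e_i+a_ih_i-a_i^2f_i$, the other factors commuting with $u_i$ since $i$ is a leaf whose only neighbour is $0$. First I compute $Y:=(Ad~e^{a_0f_0})(u_i)$: the pieces $e_i$ and $a_ih_i$ give $e_i$ and $a_i(h_i-a_0f_0)$, but $(Ad~e^{a_0f_0})(f_i)=\sum_{k\ge0}\tfrac{a_0^k}{k!}(ad~f_0)^k(f_i)$ does \emph{not} terminate at $k=1$, because $a_{0i}$ may be $\le-2$, and this expansion is the origin of the tail. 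Then I apply $Ad~e^{{\bf f}_+}$, using two computations: writing $e^{{\bf f}_+}=e^{a_if_i}e^{{\bf f}_+-a_if_i}$ and $(ad~f_i)^2f_0=0$ gives $(Ad~e^{{\bf f}_+})(f_0)=f'_i+a_i[f_i,f'_i]=:F$, and $(Ad~e^{{\bf f}_+})\big((ad~f_0)^k(f_i)\big)=(ad~F)^k(f_i)$ since $f_i$ is fixed. A short computation gives $(ad~F)(f_i)=-[f_i,f'_i]$ and $(ad~F)^2(f_i)=[[f_i,f'_i],f'_i]$, hence $(ad~F)^k(f_i)=(ad~F)^{k-2}([[f_i,f'_i],f'_i])$ for $k\ge2$. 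Substituting and collecting by powers of $a_0$, the $h_i$-terms cancel, the $f_i$-terms cancel among the $e_i,h_i$ and $k=0$ contributions, and the $[f_i,f'_i]$-terms cancel using $b_i=-a_0a_i$, leaving precisely
\[
e_i+b_if'_i-\tfrac{b_i^2}{2}[[f_i,f'_i],f'_i]-a_i^2\sum_{k\ge3}\tfrac{a_0^k}{k!}(ad~F)^{k-2}([[f_i,f'_i],f'_i]),
\]
which is the asserted formula once one recalls $F=f'_i+a_i[f_i,f'_i]$.

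The main obstacle is the bookkeeping in this last case: one must retain the full (finite but unbounded-length) $a_0$-expansion of $Ad~e^{a_0f_0}$, conjugate it term by term through $Ad~e^{{\bf f}_+}$, and verify that all low-order terms cancel so that the surviving tail is exactly $(ad~F)$ acting on $[[f_i,f'_i],f'_i]$. The conceptual point that makes this manageable is that $Ad~e^{{\bf f}_+}$ acts on $e_i,h_i,f_i$ only through the elementary $sl_2$-computation in the $i$-th root direction and sends $f_0\mapsto F$, so that all the $J_+$-dependence is absorbed into the single element $F$ and its brackets with $f_i$.
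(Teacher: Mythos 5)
Your proposal is correct and follows essentially the same route as the paper's proof: the same factorization $g_{\bf a}=e^{{\bf f}_+}h$ with $h$ the conjugator of Theorem \ref{th:conical electric22}, the same three-case analysis, and the same key principle that $Ad~e^{{\bf f}_+}$ fixes $e_0$, ${\bf f}$, $e_i$, $f_i$ and acts by the substitution $f_0\mapsto f'_0$ (resp.\ $f_0\mapsto f'_i+a_i[f_i,f'_i]$), which absorbs all the $J_+$-dependence. The only difference is bookkeeping in the $i\in J_+$ case — the paper writes $u_i=(Ad~e^{-a_if_i})(e_i)$ and merges $e^{a_if_i}$ into $e^{a_0f_0}$ so the whole computation acts on $e_i$ alone, whereas you expand $u_i=e_i+a_ih_i-a_i^2f_i$ and verify the cancellation of the $h_i$-, $f_i$-, and $[f_i,f'_i]$-terms by hand — and both yield the identical tail.
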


\begin{proof} The case $i\ne 0$, $i\notin J_+$ follows from the proof of Theorem \ref{th:conical electric22}.

Now let $i\in J_+$. Note that $(Ad~e^{a_jf_j})(u_i)=u_i$ for all $j\ne 0$, $j\notin J_+$. Also,  $i\in J_+$:
$$(Ad~e^{a_if_i}e^{a_0f_0})(u_i)=(Ad~e^{a_if_i}e^{a_0f_0}e^{-a_if_i})(e_i)=(Ad~e^{a_0(f_0+a_i[f_i,f_0])})(e_i)$$
Taking into account that 
$$(ad~((f_0+a_i[f_i,f_0]))(e_i)=a_i[[f_i,f_0],e_i]$$
$$=a_i[[f_i,e_i],f_0]=-a_0a_i[h_i,f_0]=-a_if_0$$ and
$$(ad~(f_0+a_i[f_i,f_0]))^{k-1}(f_0)=a_i[[f_i,f_0],f_0]]\ ,$$
we obtain
$$(Ad~e^{a_if_i}e^{a_0f_0})(u_i)=e_i-a_0a_if_0+\sum_{k\ge 2}\frac{a_0^{k-1}}{k!}(ad~(f_0+a_i[f_i,f_0]))^{k-1}(a_{i0}a_0a_if_0)$$
$$=e_i+b_if_0-\frac{b_i^2}{2}[[f_i,f_0],f_0]-a_0a_i^2\sum_{k\ge 3}\frac{a_0^{k-1}}{k!}(ad~(f_0+a_i[f_i,f_0]))^{k-2}([[f_i,f_0],f_0]) \ .$$

Finally,
$$Ad~g_{\bf a}(u_i)=(Ad~e^{{\bf f}_+}e^{a_0f_0}) (u_i)=(Ad~e^{{\bf f}_+-a_if_i})((Ad~e^{a_if_i}e^{a_0f_0})(u_i))$$
$$e_i+b_if'_{0i}-\frac{b_i^2}{2}[[f_i,f'_{0i}],f'_{0i}]-a_i^2\sum_{k\ge 3}\frac{a_0^k}{k!}(ad~(f'_{0i}+a_i[f_i,f'_{0i}]))^{k-2}([[f_i,f'_{0i}],f'_{0i}]) \ .$$

Now consider the remaining case $i=0$. Then
$$Ad~g_{\bf a}(u_0)=(Ad~e^{{\bf f}_+}e^{a_0f_0} e^{\bf f}e^{-a_0f_0})(e_0)=(Ad~e^{{\bf f}_+}e^{\hat {\bf f}})(e_0)=Ad~e^{{\bf f}_+}(e^{\bf f}(e_0))
$$
where $\hat {\bf f}=(Ad~e^{a_0f_0})({\bf f})={\bf f}+a_0[f_0,{\bf f}]$.

Finally, using the argument from the proof of Theorem \ref{th:conical electric22} for $i=0$ and replacing $f_0$ with $f'_0=Ad~e^{{\bf f}_+}(f_0)$, we finish the proof of Theorem \ref{th:peacock electric}.

\end{proof}

The following is an immediate corollary of Theorem \ref{th:peacock electric}.

\begin{corollary}
  \label{cor:peacock electric2}  

 In the assumptions of Theorem \ref{th:peacock electric} assume additionally, that $I\setminus J_+$ is of type $A$. Then 
 
 (a) the assignments
$$u_i\mapsto \begin{cases}
e_i+b_{i^-}f_{i^-}  & \text{if $i\notin J_+$ }\\
e_i+b_if_0+b_i\sum\limits_{k\ge 1} \frac{1}{k!}(ad~({\bf f}_+-a_if_i))^k(f_0)  & \text{if $i\in J_+$}\\
\end{cases}$$
define an injective homomorphism $\gg^{({\bf a})}\hookrightarrow \gg$.

(b) 
Suppose  that  $I=\{1,\ldots,n\}$, $\{1,\ldots,n-1\}$ is of type $A_{n-1}$, and $a_{in}=a_{ni}=\delta_{ik}$.
Then the assignments 
$$u_i\mapsto 
\begin{cases}
e_i+b_{i+1}f_{i+1}  & \text{if $1\le i<k$}\\
e_k+b_{k-1,k}f_{k-1}+b_{k,k+1}f_{k+1}\\
+b_{k-1,k}b_{k,k+1}[f_{k-1},[f_{k+1},f_k+a_n[f_n,f_k]]] & \text{if $i=k$}\\
\end{cases}
$$
where $b_{ij}:=a_{ij}a_ia_j$,
define an injective homomorphism $\gg^{({\bf a})}\hookrightarrow \gg$.

\end{corollary}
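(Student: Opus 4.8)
The plan is to derive both parts directly from Theorem~\ref{th:peacock electric} by specializing it to the case where $I\setminus J_+$ is of type $A$ and then collapsing the infinite tails in its explicit formulas by means of the two type-$A$ Serre identities $[f_i,f_j]=0$ for non-adjacent $i,j$ and $(ad~f_i)^2(f_j)=0$ for adjacent $i,j$. Since $Ad~g_{\bf a}$ is an automorphism of $\gg$, it restricts to an isomorphism of $\gg^{({\bf a})}=\langle u_i\rangle$ onto the subalgebra generated by the elements $(Ad~g_{\bf a})(u_i)$; injectivity of the stated assignment is therefore automatic once we verify that, under the type-$A$ hypothesis, these elements agree with the formulas in the statement. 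The entire content of the corollary is thus the vanishing of the higher-order correction terms, which I carry out case by case.

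For $i\in J_+$ the decisive point is that the second- and higher-order terms in Theorem~\ref{th:peacock electric} all disappear. Setting $X:={\bf f}_+-a_if_i=\sum_{j\in J_+,\,j\ne i}a_jf_j$, we have $f'_i=(Ad~e^{X})(f_0)$, and since the $f_j$ with $j\in J_+$ pairwise commute and are non-adjacent to $f_i$ we also get $[X,f_i]=0$, so $(Ad~e^{X})(f_i)=f_i$. Consequently
\[
[[f_i,f'_i],f'_i]=(Ad~e^{X})\bigl([[f_i,f_0],f_0]\bigr)=0,
\]
because $a_{i0}=-1$ forces $[f_0,[f_0,f_i]]=0$. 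As $[[f_i,f'_i],f'_i]$ is precisely the argument fed into the remaining $\sum_{k\ge 3}$ tail, that tail vanishes as well, leaving $(Ad~g_{\bf a})(u_i)=e_i+b_if'_i=e_i+b_if_0+b_i\sum_{k\ge 1}\tfrac1{k!}(ad~X)^k(f_0)$, which is the asserted formula for $i\in J_+$. The cases $i\notin J_+$ with $i$ not the root are the ``otherwise'' line of Theorem~\ref{th:peacock electric} verbatim.

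The two parts differ only in the treatment of the root. In part (a), when the root is an endpoint of the type-$A$ chain $I\setminus J_+$ the quantity ${\bf f}=a_{0^-}f_{0^-}$ is a single term, so $[{\bf f},[{\bf f},f'_0]]=a_{0^-}^2[f_{0^-},[f_{0^-},f'_0]]=0$ (again by the Serre relation, after commuting the $f_{0^-}$'s past the $J_+$-contributions in $f'_0$), and the whole tail collapses to the edge form $e_0+b_{0^-}f_{0^-}$; the interior-root situation is exactly the one treated explicitly in part (b). There the root $k$ has the two sons $k-1,k+1$, so ${\bf f}=a_{k-1}f_{k-1}+a_{k+1}f_{k+1}$ and $f'_0=f_k+a_n[f_n,f_k]$ (using $[f_n,[f_n,f_k]]=0$). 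Because $(ad~f_{k\pm1})^2$ annihilates $f'_0$ and $[f_{k-1},f_{k+1}]=0$ symmetrizes the cross term,
\[
[{\bf f},[{\bf f},f'_0]]=2a_{k-1}a_{k+1}\,[f_{k-1},[f_{k+1},f_k+a_n[f_n,f_k]]],
\]
which produces exactly the single nested-bracket term of the statement with coefficient $b_{k-1,k}b_{k,k+1}$.

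The main obstacle is the final vanishing of the $\sum_{k\ge 3}$ tail for the interior root in part (b): one must show that a single further application of the operator $ad~{\bf f}+a_k[f'_0,{\bf f}]$ annihilates the element above. For the $ad~{\bf f}$ part this is the clean identity $[f_{k\pm1},[f_{k-1},[f_{k+1},f'_0]]]=0$, obtained by commuting the outer $f_{k\pm1}$ inward (via $[f_{k-1},f_{k+1}]=0$) onto a repeated $(ad~f_{k\pm1})^2$; for the $a_k[f'_0,{\bf f}]$ part the honest verification is a $Q$-graded one, checking that the resulting weight raises the multiplicity of a leaf simple root $\alpha_{k\pm1}$ to $2$, which is not the support of a root of the relevant low-height subsystem, so the corresponding weight space is zero. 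This bookkeeping is the only nonroutine step; everything else is substitution into Theorem~\ref{th:peacock electric} followed by the two elementary Serre identities.
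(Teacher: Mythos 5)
Your overall strategy --- specialize Theorem~\ref{th:peacock electric}, use that $Ad~g_{\bf a}$ is an automorphism so injectivity is automatic, and reduce everything to the vanishing of the correction terms --- is exactly the route the paper intends (it gives no proof beyond calling the corollary immediate), and your handling of the root cases and of the $\sum_{k\ge 3}$ tail in part (b), via the support/weight argument inside the finite subdiagram $\{k-1,k,k+1,n\}$, is sound. However, your key step for $i\in J_+$ contains a genuine error: you assert $[[f_i,f'_i],f'_i]=(Ad~e^{X})\bigl([[f_i,f_0],f_0]\bigr)=0$ ``because $a_{i0}=-1$ forces $[f_0,[f_0,f_i]]=0$.'' This conflates the two Cartan entries. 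The Serre relation supplied by $a_{i0}=-1$ is $(\op{ad} f_i)^2(f_0)=0$, whereas the element you must kill is $[[f_i,f_0],f_0]=(\op{ad} f_0)^2(f_i)$, whose vanishing requires $a_{0i}=-1$. The hypotheses of Theorem~\ref{th:peacock electric} (hence of the corollary) assert only $a_{i0}=-1$ for $i\in J_+$ and leave $a_{0i}$ unconstrained --- deliberately so, because the asymmetric case is precisely type $C$: for $sp_{2n}$ one has $a_{n,n-1}=-1$ but $a_{n-1,n}=-2$, the term $[[f_n,f_{n-1}],f_{n-1}]$ does \emph{not} vanish, and it is exactly the extra summand $-\tfrac{1}{2}b_{n,n-1}^2[f_{n-1},[f_{n-1},f_n]]$ in Theorem~\ref{th:conical electrical2}(b).

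This is not a pedantic point, because under the literally stated hypotheses the conclusion you are proving fails. Take $C_2$: root $0$, $J_+=\{i\}$, $a_{i0}=-1$, $a_{0i}=-2$, so all assumptions of part (a) hold; the proposed assignment is $u_0\mapsto e_0$, $u_i\mapsto e_i+b_if_0$ with $b_i=-a_0a_i$. By Theorems~\ref{th:canonical electric} and \ref{th:flat electric}, a homomorphism would force $(\op{ad}(e_i+b_if_0))^2(e_0)=-4b_i(e_i+b_if_0)$, but a direct computation gives $-4b_ie_i-2b_i^2f_0$, so no such homomorphism exists. Hence your proof of part (a) (and the statement itself) is valid only under the additional hypothesis $a_{0i}=-1$ for all $i\in J_+$ --- the simply-laced attachment that part (b) states explicitly (its $a_{in}=a_{ni}=\delta_{ik}$, to be read as $-\delta_{ik}$) but part (a) omits. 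You should make that hypothesis explicit and replace the appeal to $a_{i0}=-1$ by the correct appeal to $a_{0i}=-1$; with that repair, the rest of your argument goes through.
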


\begin{theorem} 
\label{th:conical electric2}
Let $\gg=\gg_A$ be a  Kac-Moody algebra with $I=\{1,\ldots,r\}$ such that the Cartan matrix $A$ of $\gg$ satisfies $a_{ij}a_{ji}\ne 0$ iff $|i-j|=1$ and $a_{ij}\le a_{i-1,j-1}$ for all distinct $i,j=2,\ldots,r$. 

Suppose also that the following three conditions hold.

$\bullet$ Either $a_{12}=a_{21}=-1$ or $a_{21}<-1$.

$\bullet$ Either $a_{i-1,i}a_{i,i+1}>1$ or $a_{i-1,i}=a_{i,i+1}=a_{i,i-1}=-1$  for $i=2,\ldots,r$.

$\bullet$ Either 
$a_{i,i-1}a_{i+1,i}>1$ or
$a_{i+1,i}=a_{i,i-1}=a_{i,i+1}=-1$ for $i=2,\ldots,r$. 

Then the assignments $u_i\mapsto \begin{cases}
e_1 & \text{if $i=1$}\\
e_i+b_{i-1}f_{i-1} & \text{if $2\le i\le r$}\\
\end{cases}$
define a homomorphism of Lie algebras ${\gg'}^{(\bf b)}\to \gg$, where we abbreviated $b_j:=a_{j+1,j}a_ja_{j+1}$ for $j=1,\ldots,r-1$, similarly to Theorem \ref{th:conical electrical2} and $A'$ is $I\times I$ the Cartan matrix of $\gg'$ given by $a'_{12}=a_{12}$ $a'_{21}=a_{21}$ and
$a'_{ij}=
\min(a_{i-1,j-1},a_{ij}) 
$ for $i,j=2,\ldots,r$

\end{theorem}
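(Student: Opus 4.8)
The plan is to realize the stated assignment $u_i\mapsto U_i$, where $U_1=e_1$ and $U_i:=e_i+b_{i-1}f_{i-1}$ for $i\ge 2$, as a homomorphism out of $\gg'^{({\bf b})}$ by checking that the $U_i$ satisfy the defining edge relations \eqref{eq:deformed Serre b} of $\gg'^{({\bf b})}$, namely $(\op{ad}~U_i)^{1-a'_{ij}}(U_j)+2\delta_{a'_{ij},-1}b_{ij}U_i=0$ for all distinct $i,j$. Before computing I would record the role of the monotonicity hypothesis: since $a_{ij}\le a_{i-1,j-1}$, the minimum defining $a'_{ij}$ is attained at $a_{ij}$, so $a'_{ij}=a_{ij}$ and the exponent $1-a'_{ij}$ is the genuine Serre exponent of $\gg$ for the pair $(i,j)$. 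This is exactly what lets us feed the honest Serre relations of $\gg$ into the computation.

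The verification then splits on $|i-j|$. For $|i-j|\ge 2$ one has $a'_{ij}=0$, and expanding $[U_i,U_j]$ into its four brackets shows it vanishes, because $U_i$ and $U_j$ are supported on the non-adjacent node sets $\{i-1,i\}$ and $\{j-1,j\}$, so each of $[e_i,e_j]$, $[e_i,f_{j-1}]$, $[f_{i-1},e_j]$, $[f_{i-1},f_{j-1}]$ is zero. For adjacent nodes $i,i+1$ the essential observation is that $\op{ad}~U_i$ and $\op{ad}~U_{i+1}$ move the $Q$-weight only by $\pm\alpha_{i-1},\pm\alpha_i,\pm\alpha_{i+1}$, so each of the two relations between $U_i$ and $U_{i+1}$ may be carried out entirely inside the rank $\le 3$ subalgebra attached to $\{i-1,i,i+1\}$ (and rank $2$ at the boundary). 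I would then evaluate $(\op{ad}~U_i)^{1-a_{i,i+1}}(U_{i+1})$ and $(\op{ad}~U_{i+1})^{1-a_{i+1,i}}(U_i)$ by induction on the exponent, separating at each application the raising part $\op{ad}~e_i$ from the lowering part $b_{i-1}\op{ad}~f_{i-1}$.

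The governing mechanism, already transparent in rank $2$, is that the pure $\op{ad}~e_i$-tower applied to the $e$-summand of the other generator dies at the top degree by the Serre relation of $\gg$, while the cross terms generated by $[e_i,f_i]=h_i$ and by the $f$-lowering reassemble, when $a'_{ij}=-1$, into $-2b_{ij}U_i$ with $b_{ij}=a_{ji}a_ia_j$, and cancel to $0$ when $a'_{ij}\le -2$; for instance in the simply-laced rank-$3$ chain the terms involving the lowering generator $f_{i-1}$ (through $[f_{i-1},f_i]$ and $[f_{i-1},h_i]$) combine precisely into the $b_{i-1}f_{i-1}$ part of $U_i$. The substance of the proof is this bookkeeping in the non-simply-laced and genuine rank-$3$ cases, and it is exactly where the three bullet hypotheses are used: the relation issuing from $\op{ad}~U_i$ sees the two bonds $(i-1,i)$ and $(i,i+1)$ meeting at $i$ (controlled by the second bullet), the relation issuing from $\op{ad}~U_{i+1}$ sees the bonds $(i,i-1)$ and $(i+1,i)$ (controlled by the third bullet), and the first bullet is the analogous constraint at the node $1$, where $U_1=e_1$ carries no lowering term.

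I expect the main obstacle to be the cancellation of the ``wrong-length'' intermediate brackets in the presence of a double or triple bond: these arise from the interaction of $f_{i-1}$ with $f_i$ and with the $\op{ad}~e_i$-tower, and they must either vanish or fold into a multiple of $U_i$. The bullet inequalities are the precise numerical conditions that force this, as one checks by contrast with an excluded case such as $a_{12}=-2$, $a_{21}=-1$, where the same computation produces incompatible coefficients on $e_2$ and $f_1$ and no relation can hold. I would handle this step by a weight/degree argument bounding which monomials can occur, deduce the top-degree vanishing from the Serre relations of $\gg$, and then match the surviving coefficients against $-2b_{ij}U_i$ using only the bond data permitted by the hypotheses.
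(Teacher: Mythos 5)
Your plan is essentially the paper's own proof: the paper likewise verifies the edge relations \eqref{eq:deformed Serre b} for the images directly, via an inductive computation of $(\op{ad}~u_i)^k(u_j)$ that separates the raising tower $(\op{ad}~e_i)^k(e_j)$ from the lowering tower in $f_{i-1}$ (Proposition \ref{pr:local relations}), and then checks, exactly as you describe, that the bullet hypotheses force the $\delta_{k,2}$-correction terms to fold into $-2b_iu_i$ (when all relevant bonds equal $-1$) or to be absent (when the relation index is $\ge 3$ and both towers die by Serre relations of $\gg$). The only difference is completeness: the bracket bookkeeping you defer is precisely the content of the paper's Proposition \ref{pr:local relations}, and your reading of the three bullets and of the excluded case $a_{12}=-2$, $a_{21}=-1$ matches what those formulas show.
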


\begin{theorem} 
\label{th:conical electric3} 
Suppose that $I=\{1,\ldots,r+d\}$, $r\ge 1$, $d\ge 1$ such that $a_{ij}a_{ji}\ne 0$ for distinct $i,j$ iff either $i,j\le r$ and $|i-j|=1$ or $\min(i,j)=r$, the restriction of $A$ to  $\{1,\ldots,r,r+i\}$ satisfies the assumptions of Theorem \ref{th:conical electric2} for $i=1,\ldots,d$ and $a_{r+i,r+j} =0$ for all distinct $i,j=1,\ldots,d$. 
Then the assignments $$u_i\mapsto \begin{cases}
e_1 & \text{if $i=1$}\\
e_i+b_{i-1}f_{i-1} & \text{if $2\le i\le r$}\\
e_i+b_if_r& \text{if $r< i\le r+d$}\\
\end{cases}$$
define a  homomorphism of Lie algebras ${\gg''}^{(\bf b)}\to \gg$, where we abbreviated $b_i:=a_{ir}a_ra_i$ for $i=r+1,\ldots,r+d$ and $A''$ is the $I\times I$ Cartan matrix of $\gg''$ with $a''_{ij}=a'_{ij}$ whenever $i,j=1,\ldots,r$, $a''_{ij}=\min(a_{i-1,j-1},a_{ij})$ whenever $\min(i,j)=r$, $i\ne j$ and the remaining $a''_{ij}=0$.

\end{theorem}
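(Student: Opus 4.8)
The plan is to check that the proposed images $U_i$ of the generators $u_i$ satisfy every defining relation \eqref{eq:deformed Serre b} of ${\gg''}^{(\bf b)}$ attached to $A''$; since these relations are indexed by unordered pairs $\{i,j\}\subset I$, it suffices to treat one pair at a time. I would split the pairs of $I=\{1,\dots,r+d\}$ into two families: those contained in a single branch sub-path $P_s:=\{1,\dots,r,r+s\}$ ($1\le s\le d$), and the leaf--leaf pairs $\{r+s,r+t\}$ with $s\ne t$. A leaf--leaf pair is the only kind not contained in any $P_s$, since each $P_s$ carries exactly one leaf; in particular every far-trunk/leaf pair $\{j,r+s\}$ with $j<r$ already lies in $P_s$.

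For a pair inside $P_s$ the key observation is that the block of $A''$ indexed by $P_s$ coincides with the deformed Cartan matrix $A'$ produced by Theorem~\ref{th:conical electric2} applied to the linear path $P_s$ (after relabelling the leaf $r+s$ as the terminal vertex). On the trunk this is immediate, since $a''_{\ell k}=a'_{\ell k}=a_{\ell k}$ for $\ell,k\le r$. At the junction one evaluates $a''_{r+s,r}=\min(a_{r+s-1,r-1},a_{r+s,r})$ and $a''_{r,r+s}=\min(a_{r-1,r+s-1},a_{r,r+s})$, distinguishing $s=1$ (where $r+s-1=r$) from $s>1$ (where $a_{r+s-1,r-1}=0$ because distinct leaves are non-adjacent). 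In either case the monotonicity hypothesis $a_{\ell k}\le a_{\ell-1,k-1}$ inherited from Theorem~\ref{th:conical electric2} collapses each minimum to the undeformed entry $a_{r+s,r}$, respectively $a_{r,r+s}$, which is exactly the corresponding entry of $A'$. With the matrices identified, Theorem~\ref{th:conical electric2} guarantees that $U_1,\dots,U_r,U_{r+s}$ satisfy the relations \eqref{eq:deformed Serre b} for the $P_s$-block; moreover the parameters $b_{\ell-1}=a_{\ell,\ell-1}a_{\ell-1}a_\ell$ and $b_{r+s}=a_{r+s,r}a_ra_{r+s}$ are exactly those appearing in that theorem, so the correction terms agree as well. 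As the trunk data $U_1,\dots,U_r$ are common to all $P_s$, the trunk relations are verified consistently across branches.

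The remaining leaf--leaf relations I would verify by a direct bracket computation. Each leaf generator has the form $U_{r+s}=e_{r+s}+b_{r+s}f_r$, and since $a''_{r+s,r+t}=0$ the relation to check is simply $[U_{r+s},U_{r+t}]=0$. Expanding, $[e_{r+s},e_{r+t}]=0$ because $a_{r+s,r+t}=0$, while $[e_{r+s},f_r]=[f_r,e_{r+t}]=0$ as $r+s,r+t\ne r$, and $[f_r,f_r]=0$; hence the bracket vanishes.

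This accounts for all defining relations, so the assignment extends to a homomorphism ${\gg''}^{(\bf b)}\to\gg$. The step that demands care, and which I expect to be the main obstacle, is the matrix matching of the second paragraph: one must confirm that the $P_s$-block of the global matrix $A''$ reproduces the local Theorem~\ref{th:conical electric2} deformation $A'$, and this is exactly where the monotonicity hypothesis $a_{\ell k}\le a_{\ell-1,k-1}$ is indispensable, since it is what reduces the defining minima of $A''$ to the undeformed junction entries. Everything else is forced by the fact that the deformed Serre relations are binary, so that no single relation can couple two distinct branches.
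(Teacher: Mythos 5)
Your proposal is correct and takes essentially the same route as the paper's own proof: reduce every relation involving a pair inside a branch path $\{1,\ldots,r,r+s\}$ to Theorem \ref{th:conical electric2}, and dispose of the remaining (leaf--leaf) pairs by a direct bracket computation showing they commute. Your explicit check that the $P_s$-block of $A''$ collapses, via the monotonicity hypothesis and the vanishing of the off-branch entries, to the local matrix $A'$ of Theorem \ref{th:conical electric2} -- including the junction pairs $(r,r+s)$ -- merely spells out details that the paper's much terser argument leaves implicit.
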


We prove Theorems \ref{th:conical electric2} and \ref{th:conical electric3} in Section \ref{subsec:proof of Theorem conical electric2}.

Taking $\gg''=\gg$ in Theorem \ref{th:conical electric3}, we obtain the following

\begin{theorem}
\label{th:conical electric2 injective}  
Suppose that $I=\{1,\ldots,r+d\}$, $a_{ij}a_{ji}\ne 0$ for distinct $i,j$ iff either $i,j\le r$ and $|i-j|=1$ or $\min(i,j)=r$ (as in Theorem \ref{th:conical electric3})  
and the Cartan matrix of $\gg$ satisfies

$\bullet$ Either $a_{12}=a_{21}=-1$ or $a_{21}<-1$.

$\bullet$ Either $a_{i-1,i}=a_{i,i+1}=a_{i,i-1}=-1$ or $a_{i,i+1}<-1$ and $a_{i,i+1}\le a_{i-1,i}$ for $i=2,\ldots,r$

$\bullet$ Either $a_{i+1,i}=a_{i,i-1}=a_{i,i+1}=-1$ or $a_{i+1,i}<-1$ and $a_{i+1,i}\le a_{i,i-1}$ for $i=2,\ldots,r$.

$\bullet$ $a_{ij}\le a_{i-1,j-1}$ whenever $\min(i,j)=r$, $i\ne j$.

Then the subalgebra  generated by $u_i:= \begin{cases}
e_1 & \text{if $i=1$}\\
e_i+b_{i-1}f_{i-1} & \text{if $2\le i\le r$}\\
e_i+b_if_r& \text{if $r< i\le r+d$}\\
\end{cases}$ is naturally isomorphic to 
$\gg^{(\bf b)}$ for any ${\bf b}=(b_1,\ldots,b_{n-1})$.

\end{theorem}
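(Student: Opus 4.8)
The plan is to obtain this statement as the special case $\gg''=\gg$ of Theorem \ref{th:conical electric3}. First I would check that the four bullet hypotheses force the output Cartan matrix $A''$ of Theorem \ref{th:conical electric3} to coincide with the input matrix $A$, and that along the way the hypotheses of Theorem \ref{th:conical electric3} (hence of Theorem \ref{th:conical electric2} on each restricted subdiagram $\{1,\dots,r,r+i\}$) are satisfied. Granting $A''=A$, Theorem \ref{th:conical electric3} produces a homomorphism $\phi\colon \gg^{({\bf b})}\to\gg$ sending each $u_i$ to the displayed element, and its image is exactly the subalgebra these elements generate. It then remains to prove that $\phi$ is injective, so that this subalgebra is genuinely isomorphic to $\gg^{({\bf b})}$ rather than a proper quotient.

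The verification that $A''=A$ is purely combinatorial. Recall that $A'$ (from Theorem \ref{th:conical electric2}) and $A''$ (from Theorem \ref{th:conical electric3}) are obtained from $A$ by replacing certain off-diagonal entries with $\min(a_{i-1,j-1},a_{ij})$; I would check that every such minimum is attained by $a_{ij}$ itself, i.e. that $a_{ij}\le a_{i-1,j-1}$ holds on the nose. On the chain $\{1,\dots,r\}$ this is exactly what the second and third bullets give: in the ``all $-1$'' alternative the two entries coincide, while in the strict alternative the inequalities $a_{i,i+1}\le a_{i-1,i}$ and $a_{i+1,i}\le a_{i,i-1}$ are assumed directly (and for non-adjacent indices both sides vanish). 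In the strict alternative one moreover has $a_{i-1,i}a_{i,i+1}\ge 2>1$ (a factor $\le -1$ times a factor $\le -2$), which is precisely the product condition required by Theorem \ref{th:conical electric2}. For the branch entries, those with $\min(i,j)=r$, the inequality $a_{ij}\le a_{i-1,j-1}$ is the content of the fourth bullet. Hence all the minima are vacuous and $A''=A$.

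For injectivity I would combine Theorem \ref{th:flat electric} with a leading-term argument. Give $\gg$ its principal $\ZZ$-grading $\gg=\bigoplus_n\gg^{(n)}$, where $\gg^{(n)}=\bigoplus_{\op{ht}(\alpha)=n}\gg_\alpha$ and $\op{ht}(\sum_i c_i\alpha_i)=\sum_i c_i$. Every generator has the form $\phi(u_i)=e_i+(\text{terms of strictly smaller height})$, so the top-degree component of $\phi(u_i)$ is $e_i$. Since passing to the top-degree component is compatible with the bracket whenever the bracket of the top components is nonzero, for any iterated bracket monomial $m$ in the $u_i$ the leading term of $\phi(m)$ is the corresponding iterated bracket of the $e_i$ inside $\nn=\langle e_i\rangle$. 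By Theorem \ref{th:flat electric}, $\gg^{({\bf b})}\cong\nn$ as filtered vector spaces, so $\gg^{({\bf b})}$ has a basis of such monomials whose $e$-analogues form a basis of $\nn$. These leading terms, being a basis of $\nn\subset\gg$, are linearly independent; a downward induction on height then shows the $\phi(m)$ are linearly independent, whence $\phi$ is injective.

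The main obstacle is the combinatorial step, and specifically the interaction of the fourth bullet with the per-branch application of Theorem \ref{th:conical electric2}: one must confirm that for each branch $r+i$ the comparison occurring in the restricted chain $\{1,\dots,r,r+i\}$ genuinely reduces to the assumed inequalities, so that no minimum shifts an entry and the restricted diagram really does satisfy the hypotheses of Theorem \ref{th:conical electric2}. Once this bookkeeping is settled, the reduction to Theorem \ref{th:conical electric3} and the leading-term injectivity argument are both routine.
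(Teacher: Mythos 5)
Your reduction step coincides with the paper's: the four bullet hypotheses make every minimum $\min(a_{i-1,j-1},a_{ij})$ occurring in the definition of $A'$ and $A''$ be attained at $a_{ij}$ itself, so $A''=A$, and Theorem \ref{th:conical electric3} with $\gg''=\gg$ yields the homomorphism $\phi\colon\gg^{({\bf b})}\to\gg$ onto the subalgebra generated by the displayed elements. That half is correct, including your observation that the strict alternatives force $a_{i-1,i}a_{i,i+1}>1$, as required by Theorem \ref{th:conical electric2}.

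The injectivity step, however, has a genuine gap. You invoke Theorem \ref{th:flat electric} to assert that the abstract algebra $\gg^{({\bf b})}$ is isomorphic to $\nn$ as a filtered vector space for an \emph{arbitrary} tuple ${\bf b}$. But Theorem \ref{th:flat electric} is a statement about the vertex-type algebra $\gg^{({\bf a})}$, i.e.\ about $\gg^{({\bf b})}$ only when $b_{ij}=a_{ji}a_ia_j$ for some tuple ${\bf a}$. Such a factorization fails for exactly the tuples the theorem is meant to cover: since the statement is claimed for any ${\bf b}$, it includes patterns of zeros like $b_1\ne 0$, $b_2=0$, $b_3\ne 0$, for which no ${\bf a}$ exists -- the paper points this out explicitly right after Theorem \ref{th:flat electric type A}. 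For such ${\bf b}$, flatness of the abstract $\gg^{({\bf b})}$ is not available as an input; it is essentially equivalent to what you are trying to prove. Indeed, a priori the associated graded of $\gg^{({\bf b})}$ (with respect to the generator filtration) is only a \emph{quotient} of $\nn$, because the leading terms of the relations \eqref{eq:deformed Serre b} are the Serre relations; ruling out a collapse of this quotient is precisely the content of injectivity, and the only known way to do it is via the concrete realization inside $\gg$. So your argument assumes its conclusion for the degenerate tuples.

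The paper closes this gap with the appendix machinery rather than with a quoted flatness statement: one applies Proposition \ref{pr:flat filtred algebras} to the subalgebra $B\subset R\otimes U(\gg)$ generated by the concrete elements $u_i$, using the filtration $\deg e_i=1$, $\deg f_i=\deg h_i=0$. Since $gr\,u_i=e_i$ and the elements $e_i$ satisfy exactly the Serre presentation of $R\otimes U(\nn)$, the proposition yields simultaneously that the relations \eqref{eq:deformed Serre b} present $B$ and hence that $B\cong U(\gg^{({\bf b})})$, which is the assertion (this is the argument of Section \ref{subsec:Proof of Theorem conical electrical2}, re-run verbatim, as the paper does in Section \ref{subsec:proof of Theorem conical electric2 injective}). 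Your leading-term mechanism is sound in itself -- it is the same device that powers Proposition \ref{pr:flat filtred algebras} -- but to use it you would first have to establish flatness of $\gg^{({\bf b})}$ for all ${\bf b}$ (say, by a semicontinuity argument from the generic case, using that $gr\,\gg^{({\bf b})}$ is always a quotient of $\nn$), which is an additional nontrivial step your proposal omits.
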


We prove Theorem \ref{th:conical electric2 injective} in Section \ref{subsec:proof of Theorem conical electric2 injective}.

As in the proof of Theorem \ref{th:flat electric}, all results of this section are valid if one replaces $\CC$ with and $\CC$-algebra containing all $a_i$ (resp. all $b_{ij}$).

\section{Proof of main results}

\subsection{Proof of Theorem\ref{th:canonical electric}} 
\label{subsec:proof of Theorem canonical electric}
Let
$e_{i^rj}:=\frac{1}{r!}(ad~e_i)^r(e_j)$, $f_{i^rj}:=\frac{1}{r!}(ad~f_i)^r(f_j)$, and $u_{i^rj}:=\sum\limits_{k=0}^r a_i^{r-k} \binom{a_{ij}+r-1}{r-k}(e_{i^k j}-(-1)^ra_i^{2k}a_j^2f_{i^k j})$
for any distinct $i,j\in I$ and $r\ge 1$.

We need the following result.

\begin{proposition}
\label{pr:iterated Serre}
 $[u_i,u_{i^r j}]=(r+1)u_{i^{r+1} j}$ for all distinct $i,j\in I$ and $r\ge 1$.
\end{proposition}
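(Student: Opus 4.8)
The plan is to prove the identity by a direct computation inside $\gg$, reducing everything to six elementary bracket formulas together with a single binomial identity. Write $h_i := [e_i,f_i]$, so that $u_i = e_i + a_i h_i - a_i^2 f_i$, and abbreviate $\beta := a_{ij}$. Since $e_{i^kj}$ and $f_{i^kj}$ live in the root spaces of $k\alpha_i+\alpha_j$ and $-k\alpha_i-\alpha_j$ respectively (and $i\ne j$ keeps these away from the Cartan), the entire computation takes place inside the rank-two subalgebra generated by $e_i,f_i,e_j,f_j$, and the two families of vectors will never mix under the brackets we use.

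First I would establish, for every $k\ge 0$, the formulas
\[ [e_i, e_{i^k j}] = (k+1)e_{i^{k+1}j}, \quad [h_i, e_{i^k j}] = (2k+\beta)e_{i^k j}, \quad [f_i, e_{i^k j}] = -(k-1+\beta)e_{i^{k-1}j}, \]
\[ [f_i, f_{i^k j}] = (k+1)f_{i^{k+1}j}, \quad [h_i, f_{i^k j}] = -(2k+\beta)f_{i^k j}, \quad [e_i, f_{i^k j}] = -(k-1+\beta)f_{i^{k-1}j}. \]
The two raising formulas are immediate from $e_{i^kj}=\tfrac1{k!}(ad~e_i)^k(e_j)$, and the weight formulas hold because $e_{i^kj}$ (resp.\ $f_{i^kj}$) has $h_i$-weight $2k+\beta$ (resp.\ $-(2k+\beta)$). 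Only the two lowering formulas require work: starting from $[f_i,e_j]=0$ and using $[ad~f_i,ad~e_i]=-ad~h_i$ together with the weight formula, a one-step induction on $k$ gives $(ad~f_i)(ad~e_i)^k(e_j)=-k(k-1+\beta)(ad~e_i)^{k-1}(e_j)$; dividing by $k!$ yields the stated coefficient, and the $f_{i^kj}$ case is symmetric.

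Next I would substitute $u_i=e_i+a_ih_i-a_i^2f_i$ into $[u_i,u_{i^rj}]$ and expand term by term. The structural point is that $ad~u_i$ preserves the span of the $e_{i^kj}$ and the span of the $f_{i^kj}$ separately, so the $e$-part and the $f$-part of $[u_i,u_{i^rj}]$ may be matched against those of $(r+1)u_{i^{r+1}j}$ independently. Collecting the coefficient of $e_{i^mj}$ on both sides, and writing $c_{r,k}:=a_i^{r-k}\binom{\beta+r-1}{r-k}$, the $e$-part reduces to the scalar identity
\[ m\,c_{r,m-1} + a_i(2m+\beta)\,c_{r,m} + a_i^2(m+\beta)\,c_{r,m+1} = (r+1)\,c_{r+1,m}. \]
A short check shows that, after cancelling the common factor $(-1)^r a_i^{2m}a_j^2$, the coefficient of $f_{i^mj}$ reduces to exactly the same identity; here one must keep track of the sign flip $(-1)^r\mapsto(-1)^{r+1}$ built into the definition of $u_{i^{r+1}j}$, which is precisely what the extra raising/lowering index shifts absorb.

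The main obstacle is therefore the purely combinatorial identity above. Factoring out the common power $a_i^{r-m+1}$, it becomes
\[ m\binom{\beta+r-1}{r-m+1} + (2m+\beta)\binom{\beta+r-1}{r-m} + (m+\beta)\binom{\beta+r-1}{r-m-1} = (r+1)\binom{\beta+r}{r+1-m}, \]
which I would prove by applying Pascal's rule $\binom{\beta+r}{r+1-m}=\binom{\beta+r-1}{r+1-m}+\binom{\beta+r-1}{r-m}$ to the right-hand side and the absorption identities $\binom{N}{s+1}=\tfrac{N-s}{s+1}\binom{N}{s}$ and $\binom{N}{s-1}=\tfrac{s}{N-s+1}\binom{N}{s}$, with $N=\beta+r-1$ and $s=r-m$, to the left-hand side; both sides then collapse to the same multiple of $\binom{\beta+r-1}{r-m}$. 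As every quantity involved is polynomial in $\beta=a_{ij}$, this is an identity of polynomials and holds identically, completing the proof.
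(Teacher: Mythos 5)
Your proposal is correct and takes essentially the same route as the paper's own proof: the same elementary bracket formulas (your two lowering formulas are exactly the paper's Lemma), term-by-term expansion of $[u_i,u_{i^rj}]$ with $u_i=e_i+a_ih_i-a_i^2f_i$, and reduction to one binomial identity settled by Pascal's rule plus the absorption identities. The only difference is organizational—you collect coefficients of $e_{i^mj}$ and $f_{i^mj}$ and note explicitly that the two families never mix, while the paper computes $[e_i,u_{i^rj}]$, $[h_i,u_{i^rj}]$, $[f_i,u_{i^rj}]$ separately and then combines—but your scalar identity, under the substitution $x=a_{ij}+r-1$, $y=r-m$, is literally the paper's computation of $c_k$.
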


\begin{proof} The following is immediate.

\begin{lemma}
$[f_i,e_{i^k j}]=-(a_{ij}+k-1)e_{i^{k-1} j}$, $[e_i,f_{i^k j}]=-(a_{ij}+k-1)f_{i^{k-1} j}$ for all $k\ge 0$ (with the convention $f_{i^{-1}j}=e_{i^{-1}j}=0$).
\end{lemma}

Thus, we have for $r\ge 1$:
$$[e_i,u_{i^rj}]=\sum_{k=0}^r a_i^{r-k} \binom{a_{ij}+r-1}{r-k}([e_i,e_{i^k j}]-(-1)^ra_i^{2k}a_j^2[e_i,f_{i^k j}])$$
$$=\sum_{k=0}^r a_i^{r-k} \binom{a_{ij}+r-1}{r-k}((k+1)e_{i^{k+1} j}+(-1)^r(a_{ij}+k-1)a_i^{2k}a_j^2f_{i^{k-1} j}) \ ,$$
$$[h_i,u_{i^rj}]=\sum_{k=0}^r a_i^{r-k} \binom{a_{ij}+r-1}{r-k}([h_i,e_{i^k j}]-(-1)^ra_i^{2k}a_j^2[h_i,f_{i^k j}])$$
$$=\sum_{k=0}^r a_i^{r-k} \binom{a_{ij}+r-1}{r-k}(2k+a_{ij})(e_{i^k j}+(-1)^ra_i^{2k}a_j^2f_{i^k j})\ ,$$
$$[f_i,u_{i^rj}]=\sum_{k=0}^r a_i^{r-k} \binom{a_{ij}+r-1}{r-k}([f_i,e_{i^k j}]-(-1)^ra_i^{2k}a_j^2[f_i,f_{i^k j}])$$
$$=\sum_{k=0}^r a_i^{r-k} \binom{a_{ij}+r-1}{r-k}(-(a_{ij}+k-1)e_{i^{k-1} j}-(-1)^r(k+1)a_i^{2k}a_j^2f_{i^{k+1} j})$$
Therefore, 
$$[u_i,u_{i^rj}]=\sum_{k=0}^{r+1}c_k a_i^{r+1-k}(e_{i^kj}+(-1)^ra_i^{2k}a_j^2f_{i^kj})$$
where $c_{r+1}=r+1$ and
$$c_k=(r-y)\binom{x}{y+1}+(x+r+1-2y)\binom{x}{y}+ (x-y+1)\binom{x}{y-1}$$
$$=(r+1)\binom{x+1}{y+1}-(y+1)\binom{x}{y+1}+(x-2y)\binom{x}{y}+y\binom{x}{y}=(r+1)\binom{x+1}{y+1}$$
because $\binom{x}{y+1}+\binom{x}{y}=\binom{x+1}{y+1}$,  $(y+1)\binom{x}{y+1}=(x-y)\binom{x}{y}$, $(x-y+1)\binom{x}{y-1}=y\binom{x}{y}$,
where we abbreviated $x:=a_{ij}+r-1$, $y:=r-k$.

This proves Proposition \ref{pr:iterated Serre}.
\end{proof}

Furthermore,
$$[u_i,u_j]=[e_i+a_ih_i-a_i^2f_i,e_j+a_jh_j-a_j^2f_j]=e_{ij}+a_i^2a_j^2f_{ij}$$
$$-a_ja_{ji}e_i+a_ia_{ij}e_j+a_j^2a_ia_{ij}f_j-a_i^2a_ja_{ji}f_i=u_{ij}-a_ja_{ji}(e_i+a_i^2f_i)$$

Taking into account that
$$[u_i,e_i+a_i^2f_i]=[e_i+a_ih_i-a_i^2f_i,e_i+a_i^2f_i]=2a_iu_i$$ and
using Proposition \ref{pr:iterated Serre} with $r=2$, we obtain
$$(ad~u_i)^2(u_j)=2u_{i^2j}-2a_ia_ja_{ji}u_i\ .$$
In turn, this and Proposition \ref{pr:iterated Serre}  imply by an induction in $r\ge 3$ that 
$$(ad~u_i)^r(u_j)=r!u_{i^rj}$$
for all $r\ge 3$.

Finally, if $r=1-a_{ij}$ then $\binom{a_{ij}+r-1}{r-k}=0$ for all $k<r$ and $e_{i^r j}= f_{i^r j}=0$ which implies that $u_{i^r j}=0$.

Theorem \ref{th:canonical electric} is proved.
\endproof

\subsection{Proof of Theorem \ref{th:flat electric}}
\label{subsec:proof of Theorem flat electric}

We apply the results of {\bf Appendix} to our situation as follows.

Let $R$ be any $\CC$-algebra, fix a family ${\bf a}=(a_i,i\in I)$ of elements of $R$.

Denote  $A:=R\otimes U(\gg)$. By definition,   the subalgebra $B$ of $A$ generated by $X:=\{u_i=e_i+a_ih_i-a_i^2f_i,i\in I\}$  over $R$ is $U(\gg^{(\bf a)})$. 
According to \eqref{eq:deformed Serre}, the
kernel of the canonical  homomorphism $\varphi_{X,B}:R\langle X\rangle \twoheadrightarrow B$  contains the set $Y:=\{(ad~u_i)^{1-a_{ij}}(u_j)+2\delta_{a_{ij},-1}a_{ji}a_ia_ju_i:i,j\in I,i\ne j\}$.

Let us define a filtration on $A$ by setting $\deg e_i=1$, $\deg f_i=\deg h_i=0$ for $i\in I$. Clearly, $gr A$ is generated by $e_i,f_i,h_i$ subject to the usual Serre relations except for $[e_i,f_i]=h_i$ which is replaced by $[e_i,f_i]=0$. In particular, the subalgebra $B_0$ of $gr A$ generated by $e_i,i\in I$ is naturally isomorphic to $R\otimes U(\nn)$.

Then the natural inclusion of $R$-algebras $B\subset A$ is compatible with the filtration and defines a natural inclusion of graded algebras $grB\subset B_0$ because $gr~u_i=e_i$ for $i\in I$ (that is,  $grX=\{e_i,i\in I\}$).

Clearly, $gr~Y=\{(ad~e_i)^{1-a_{ij}}(e_j):i,j\in I,i\ne j\}$ and  is the presentation of $B_0$, i.e., $gr~Y$ generates the kernel of the  canonical homomorphism $\varphi_{grX,gr B}:R\langle grX\rangle \twoheadrightarrow grB=B_0$.

Thus, all conditions of Proposition \ref{pr:flat filtred algebras} are met. The proposition guarantees that the relations \eqref{eq:deformed Serre} give a presentation of $B=U(\gg^{(\bf a)})$ as an associative algebra over $R$ hence of the Lie algebra $\gg^{(\bf a)}$ over $R$.

The theorem is proved.
\endproof

\subsection{Proof of Theorem \ref{th:conical electrical}}
\label{subsec:proof of Theorem conical electric}
Clearly, $u_i=(Ad~e^{-a_if_i})(e_i)$ for all $i\in I=\{1,\ldots,n-1\}$. 

Then for $i=1$ we have
$$(Ad~g_{\bf a})(u_1)=(Ad~e^{a_{n-1}f_{n-1}}\cdots e^{a_2f_2})(e_1)=e_1\ .$$

Now let $i\ge 2$. 
Then 
$$(Ad~e^{a_if_i}\cdots e^{a_2f_2}e^{a_1f_1})(u_i)=(Ad~e^{a_if_i}e^{a_{i-1}f_{i-1}}e^{-a_if_i})(e_i)$$
$$=(Ad~e^{a_{i-1}(f_{i-1}+a_i[f_i,f_{i-1}])})(e_i)=(Ad~e^{a_{i-1}a_i[f_i,f_{i-1}]})(e_i)$$
$$=e_i+a_{i-1}a_i[[f_i,f_{i-1}],e_i]=e_i-a_{i-1}a_if_{i-1}\ .$$
Here we used the facts that $(Ad~e^{af_j})(e_i)=e_i$ if $i\ne j$, $(Ad~e^{af_j})(f_i)=f_i$ whenever $|i-j|>1$ and
$(Ad~e^{a_if_i})(f_{i-1})=f_{i-1}+a_i[f_i,f_{i-1}]$ because $a_{i,i-1}=-1$.  
This is based on the following general fact
\begin{equation}\label{eq:Ad-ad}
    (Ad~e^x)(y)=e^{ad~x}(y)\ .
\end{equation}

Finally, 
$$(Ad~g_{\bf a})(u_i)=(Ad~e^{a_{n-1}f_{n-1}}\cdots e^{a_{i+1}f_{i+1}})(e_i-a_{i-1}a_if_{i-1})=e_i-a_{i-1}a_if_{i-1}\ .$$

The theorem is proved. \endproof

\subsection{Proof of Theorem \ref{th:form}}
\label{subsec proof theorem form} 

Recall that $sl_n$ acts on $V^*$ via $$e_i(v_j^*)=\delta_{ij}v^*_{i+1},~f_i(v_j^*)=\delta_{i+1,j}v^*_i\,$$
where $\{v^*_i\}$ is the basis of $V^*$ dual to $\{v_i\}$.
Therefore, a direct calculation shows that the generators $u_i=e_i+
b_{i-1}f_{i-1}$
for $i=1,\ldots,n-1$ of  $sl_n^{({\bf b})}$
preserve 
\bea
\omega_{\bf b}:=\sum\limits_{k=1}^{n-1}\left(\prod\limits_{i=k}^{n-2} (-b_i)\right )v^*_k\wedge v^*_{k+1}\eea 

It is immediate that $v^1\in V$ is invariant under the restriction from $sl_n$-action on $V:=V_{\omega_1}$ to the $sl_n^{({\bf b})}$-action on $V$. This, in particular, implies that the orthogonal complement $V^1\subset V^*$ to $v^1$ is a (co-dimension $1$) $sl_n^{({\bf b})}$-submodule of $V^*$.

It is easy to see that $\omega_{\bf b}\in \Lambda^2 V^*$ in fact belongs to $\Lambda^2 V^1$ for odd $n$. Clearly, in this case vectors $w_i=\begin{cases}
b_iv_i^*+v_{i+2}^* &\text{if $i$ is odd}\\
v_i^* &\text{if $i$ is even}\\
\end{cases}$
$i=1,\ldots,n-1$ form a basis of $V^1$ and
$-\omega_{\bf b}=w_1\wedge w_2+w_3\wedge w_4\cdots w_{n-2}\wedge w_{n-1}$
and it of rank $n-1$ if $b_1\cdots b_{n-2}\ne 0$. Since $\dim sl_n^{({\bf b})}=\dim sp_{n-1}$, this implies that $sl_n^{({\bf b})}\cong sp_{n-1}$, and thus finishes the proof for odd $n$.

Let $n$ be even.
It well-known that $\det M=\prod\limits_{i=1}^k m_{2i,2i-1}m_{2i-1,2i}$ for any tridiagonal $2k\times 2k$-matrix $M$ with $m_{ii}=0$. 

This implies that $\omega_{\bf b}$ is of rank $n$  if $b_1\cdots b_{n-2}\ne 0$. Therefore, in this case, the annihilator of $\omega_{\bf b}$ in $sl_n$ is isomorphic to $sp_n$. Therefore, $sl_n^{({\bf b})}$ identifies with a Lie subalgebra of $sp_n$.

Since $V$ is a simple $sp_n$-module, then for any nonzero $v\in V$ the annihilator of $v$ is isomorphic to the maximal parabolic subalgebra of $sp_n$, i.e., to $sp_{n-1}$. 

Since $sl_n^{(b)}$ also annihilates $v^1\in V$, $sl_n^{(b)}$ is a Lie subalgebra of $sp_{n-1}$. Then the dimension argument once again implies that 
$sl_n^{({\bf b})}\cong sp_{n-1}$ as well for even $n$.

The theorem is proved. \endproof

\subsection{Proof of Theorems  \ref{th:flat electric type A}, \ref{th:edge electric sl_n general}, and \ref{th:conical electrical2}} 

\label{subsec:Proof of Theorem conical electrical2}

Note that Theorem \ref{th:flat electric type A} is a particular case of Theorem \ref{th:edge electric sl_n general}(a).

Let $\hat {\hat R}=\CC[a_i,i\in I]$ and let $\hat R$ be its subalgebra generated by all $b_{ij}:=a_{ji}a_ia_j$ whenever $a_{ij}=-1$ in the assumptions of Theorems \ref{th:conical electrical2} and \ref{th:edge electric sl_n general}. Clearly, $\hat R=\CC[b_{ij}]$.

Indeed, let $u_i=u_i^{\bf b}\in \hat R\otimes \gg$, $i\in I$ be the elements as in (the right hand sides of) Theorems \ref{th:edge electric sl_n general}   and \ref{th:conical electrical2}. Let $v_{ij}^{\bf b}\in \hat R\otimes \gg$ be the left hand side of \eqref{eq:deformed Serre b}  in each case. 

\begin{lemma} 
\label{le:b-relations}
$v_{ij}^{\bf b}=0$ in each case of Theorems \ref{th:conical electrical2} and \ref{th:edge electric sl_n general}.
    
\end{lemma}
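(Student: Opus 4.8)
The plan is to verify \eqref{eq:deformed Serre b} for the explicit generators by realizing each $u_i^{\bf b}$ as the image of a vertex-type generator under a Lie-algebra automorphism, and then transporting Theorem \ref{th:canonical electric} through that automorphism. First I would work over $\hat{\hat R}=\CC[a_i,i\in I]$ and specialize $b_{ij}=a_{ji}a_ia_j$. Under this specialization the right-hand sides of Theorems \ref{th:edge electric sl_n general} and \ref{th:conical electrical2} coincide with the elements $(Ad~g_{\bf a})(u_i^{\bf a})$ already computed in Section \ref{Sect:other electrical homomorphisms}: for Theorem \ref{th:edge electric sl_n general} this is Theorem \ref{th:conical electric22} applied to the $sl_n$ Dynkin diagram rooted at $k$, and for the $so_{2n+1}$ and $sp_{2n}$ statements it is the analogous conjugation by $g_{\bf a}=\prod_i e^{a_if_i}$. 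Here $u_i^{\bf a}=e_i+a_i[e_i,f_i]-a_i^2f_i$ are the vertex generators of \eqref{eq:electric generators}.

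Since $Ad~g_{\bf a}$ is an automorphism of $\hat{\hat R}\otimes\gg$, it commutes with $ad$, so applying it to the identity
$$(ad~u_i^{\bf a})^{1-a_{ij}}(u_j^{\bf a})=-2\delta_{a_{ij},-1}a_{ji}a_ia_ju_i^{\bf a}$$
of Theorem \ref{th:canonical electric} yields
$$(ad~u_i^{\bf b})^{1-a_{ij}}(u_j^{\bf b})=-2\delta_{a_{ij},-1}a_{ji}a_ia_ju_i^{\bf b}=-2\delta_{a_{ij},-1}b_{ij}u_i^{\bf b},$$
which is precisely $v_{ij}^{\bf b}=0$ at the specialized values $b_{ij}=a_{ji}a_ia_j$. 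To upgrade this to arbitrary ${\bf b}$, I would observe that $u_i^{\bf b}$, and hence $v_{ij}^{\bf b}$, lie in $\hat R\otimes\gg$ with coefficients polynomial in the $b_{ij}$. Since the edge monomials $a_{ji}a_ia_j$ are algebraically independent (the edge incidence vectors of a tree are linearly independent, so $\hat R=\CC[b_{ij}]$ embeds in $\hat{\hat R}$ via $b_{ij}\mapsto a_{ji}a_ia_j$), vanishing after this substitution forces $v_{ij}^{\bf b}=0$ already in $\hat R\otimes\gg$, as claimed.

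The step I expect to be the main obstacle is the identification $u_i^{\bf b}=(Ad~g_{\bf a})(u_i^{\bf a})$ in the non-simply-laced cases of Theorem \ref{th:conical electrical2}, where Theorem \ref{th:conical electric22} does not literally apply because its hypothesis $a_{ij}\in\{0,-1\}$ fails at the special root. For $so_{2n+1}$ the double edge at the short root $\alpha_1$ raises the exponent $1-a_{ij}$ and requires a separate conjugation computation along the path; for $sp_{2n}$ the long-root generator acquires the cubic correction $-\tfrac12 b_{n,n-1}^2[f_{n-1},[f_{n-1},f_n]]$, and checking that $Ad~g_{\bf a}$ produces exactly this term—equivalently, that the folding $sp_{2n}\hookrightarrow sl_{2n}$ carries the flip-invariant $sl_{2n}$ edge model onto these generators, as illustrated by the $sp_6$ computation—is the delicate bookkeeping. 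Once these case-by-case identifications are secured, the automorphism argument together with the injectivity of $\hat R\hookrightarrow\hat{\hat R}$ closes the lemma uniformly.
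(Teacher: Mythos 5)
Your architecture is the same as the paper's: work over $\hat{\hat R}=\CC[a_i,i\in I]$, specialize $b_{ij}=a_{ji}a_ia_j$, identify the edge generators with $(Ad~g_{\bf a})(u_i^{\bf a})$, transport the relations \eqref{eq:deformed Serre} of Theorem \ref{th:canonical electric} through the automorphism $Ad~g_{\bf a}$, and descend to $\hat R=\CC[b_{ij}]$ using algebraic independence of the edge monomials (your incidence-vector argument here is correct and is what the paper's ``Clearly, $\hat R=\CC[b_{ij}]$'' amounts to). For $sl_n$ your plan is complete and coincides with the paper's proof (Theorem \ref{th:conical electric22} rooted at $k$). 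For $sp_{2n}$ the identification you defer is supplied by a result already in the paper: Theorem \ref{th:peacock electric} with root $0=n-1$ and $J_+=\{n\}$, whose $J_+$-formula is precisely what produces the cubic correction $-\tfrac12 b_{n,n-1}^2[f_{n-1},[f_{n-1},f_n]]$ in Theorem \ref{th:conical electrical2}(b); you gesture at the $sp_6$ folding example instead, but this gap is fillable by quoting that theorem.

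The genuine defect is the $so_{2n+1}$ case, which you postpone as ``delicate bookkeeping'': there the identification you hope to secure is not delicate but false. Already in rank $2$, with the Cartan entries oriented so that the edge generators $u_1=e_1$, $u_2=e_2+b_{12}f_1$ actually satisfy \eqref{eq:deformed Serre b} (namely $a_{12}=-1$, $a_{21}=-2$, so $b_{12}=a_{21}a_1a_2=-2a_1a_2$ under your specialization), one computes with $g_{\bf a}=e^{a_2f_2}e^{a_1f_1}$, using $(ad~f_2)^3(f_1)=0$,
\begin{equation*}
(Ad~g_{\bf a})(u_2^{\bf a})=e_2-2a_1a_2f_1-a_1a_2^2[f_2,f_1]\ ,
\end{equation*}
which differs from the edge generator $e_2-2a_1a_2f_1$ by the nonzero term $-a_1a_2^2[f_2,f_1]$; the identical extra term survives in $so_{2n+1}$ for every $n$, since the computation localizes to the rank-$2$ subalgebra at the multiple bond. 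So the transported relations hold for the conjugates, not for the generators of Theorem \ref{th:conical electrical2}(a), and your argument says nothing about the latter: these generators are simply not $Ad~g_{\bf a}$-images of the vertex generators. (This is also why Theorem \ref{th:conical electric22} cannot be cited here — its hypothesis $a_{ij}\in\{0,-1\}$ for $j\ne 0$ fails on the multiple bond — a defect which, to be fair, the paper's own two-line treatment of this case shares.) What closes this case is not a conjugation but a direct verification of the two rank-$2$ relations at the multiple bond, $(ad~u_i)^2(u_j)=-2b_{ij}u_i$ and $(ad~u_j)^3(u_i)=0$, for the explicit elements — exactly the kind of computation recorded in Proposition \ref{pr:local relations} and used for Theorems \ref{th:conical electric2}--\ref{th:conical electric2 injective}. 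As written, your proposal proves Lemma \ref{le:b-relations} only in the $sl_n$ case.
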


\begin{proof} Theorem \ref{th:conical electric22} taken in $\hat {\hat R}\otimes sl_n$ and $0=k\in A_{n-1}$ (so that $A_{n-1}$ is the rooted conical tree with sub-trees $\{1,\ldots,k-1\}$ and $\{k+1,\ldots,n-1\}$) implies 
Lemma \ref{le:b-relations} in the assumptions of Theorem   \ref{th:edge electric sl_n general} for $\hat R\otimes sl_n$.

Likewise, Theorem \ref{th:conical electric22} taken in $\hat {\hat R}\otimes so_{2n+1}$ and $0=n\in B_n$ (so that $B_n$ is the rooted conical tree with a single sub-tree $\{1,\ldots,k-1\}$) implies Lemma \ref{le:b-relations}
in the assumptions of Theorem   \ref{th:conical electrical2}(a) in $\hat R\otimes so_{2n+1}$.  

Finally, Theorem \ref{th:conical electric22} taken in $\hat {\hat R}\otimes sp_{2n}$ and $0=n-1\in C_n$, $J_+=\{n\}$ (so that $C_n$ is the rooted conical tree with a single sub-tree $\{1,\ldots,n-2\}$ and $J_+$) implies Lemma \ref{le:b-relations}
in the assumptions of Theorem   \ref{th:conical electrical2}(b) in $\hat R\otimes sp_{2n}$ (with $b_i=b_{i-1,i}$ for $i<n$ and $b_n=b_{n,n-1}$).

The lemma is proved.
\end{proof}

Furthermore, 
we proceed similarly to the proof of Theorem \ref{th:flat electric}.   Indeed, let $R$ be any $\CC$-algebra containing all $b_{ij}$.
 
Denote  $A:=R\otimes U(\gg)$. By definition, the subalgebra $B$ of $A$ generated by $X:=\{u_i,i\in I\}$  over $R$ is $U(\gg^{(\bf b)})$.

 The canonical specialization homomorphism $\hat R\twoheadrightarrow R$ implies that Lemma \ref{le:b-relations} holds over $R$ as well, that is,  $u_i$ satisfy the relations of Theorems \ref{th:conical electrical2} and \ref{th:edge electric sl_n general}, that is, in the notation of Proposition \ref{pr:flat filtred algebras},
the
kernel of the canonical  homomorphism $\varphi_{X,B}:R\langle X\rangle \twoheadrightarrow B$  contains the set $Y:=\{v_{ij}^{\bf b},i,j\in I\} \ .
$

Let us define a filtration on $A$ same way as in the proof of Theorem \ref{th:flat electric}.

In particular, the subalgebra $B_0$ of $gr~A$ generated by $e_i,i=1,\ldots,n-1$ is naturally isomorphic to $R\otimes U(\nn)$, where $\nn=<e_i,i\in I>$ is the nilpotent Lie subalgebra of $\gg$.

Then the natural inclusion of $R$-algebras $B\subset A$ is compatible with the filtration and defines a natural inclusion of graded algebras $grB\subset B_0$ because $gr\,u_i=e_i$ for $i\in I$ (that is,  $grX=\{e_i,i\in I\}$).

Clearly, $grY=\{(ad~e_i)^{1-a_{ij}}(e_j):i\ne j\}$ and  is the presentation of $B_0$, i.e., $grY$ generates the kernel of the  canonical homomorphism $\varphi_{grX,gr B}:R\langle grX\rangle \twoheadrightarrow grB=B_0$.

Thus, all conditions of Proposition \ref{pr:flat filtred algebras} are met. The proposition guarantees that the relations of Theorems \ref{th:conical electrical2} and \ref{th:edge electric sl_n general} give a presentation of $B=U(\gg^{(\bf b)})$ as an associative algebra over $R$ hence, of the Lie algebra $\gg^{(\bf b)}$ over $R$.

Theorems \ref{th:flat electric type A}, \ref{th:edge electric sl_n general}, and \ref{th:conical electrical2} are proved. \endproof

\medskip

\subsection{Proof of Theorem \ref{th:su}}
\label{subsec:Proof of Theorem su}
\begin{proof}
The generators of $sp^{({\bf b})}_{2n}$ are as in Theorem \ref{th:conical electrical2}
$$u_i =\tilde e_i + b_{i-1} \tilde f_{i-1} - \delta_{i,n} \frac{1}{2}b_{n-1}^2 [\tilde f_{n-1}, [\tilde f_{n-1}, \tilde f_n]]$$
(a) It is clear that the set of elements 
\[\{u_1,\ldots ,u_{n-1}\}\]
generates a subalgebra of $sp^{({\bf b})}_{2n}$ isomorphic to $sl_n^{(b_1,\ldots ,b_{n-2})}$. Define $J$ as an ideal of $sp^{({\bf b})}_{2n}$ generated by the set
$[sl_n^{(b_1,\dots ,b_{n-2})},u_{n}]$.

Recall that ${sp}_{2n}$ contains $sl_n$ and splits as a $sl_n$-module
$$sp_{2n}\cong V_1 \oplus sl_n\oplus V_{2}$$
where the $sl_n$ modules $V_{1}$ and $V_2$ are $\CC e_n+[sl_n,e_n]$ and $\CC f_n+[sl_n,f_n]$ respectively. Note that
$V_1=S^2 V_{\omega_1}$. As a direct calculation shows the projection of $J$ to $V_1$ is an isomorphism of vector spaces and moreover it is an $sl^{b_1,\ldots b_{n-2}}_n$ module isomorphism where $V_1$
is a module over $sl^{b_1,\ldots b_{n-2}}_n$ since $sl^{b_1,\ldots b_{n-2}}_n\subset sl_n$.

(b) Introduce the set of elements
$u'_i$, $1\leq i\leq n$, as follows:
$$
u'_n=u_n,\,\,u'_{n-1} = [u_{n-1},[u_{n-1},u_n]]
$$
and
$
u'_{n-k} = Ad^2_{u_{n-k}}\circ Ad^2_{u_{n-k+1}}\circ \dots \circ Ad^2_{u_{n-1}}(u_n)
$.
There is an isomorphism of Lie algebras $\phi:sl_{n+1}^{(\bf b')}\rightarrow J$ such that $\phi(v_i)=u'_i$, where $v_i$ are the generators of $sl_{n+1}^{({\bf b'})}$, ${\bf b'}=(b'_1,\ldots,b'_{n-1})$ where
$b'_k=-2^{2(n-k)+1}\prod\limits_{i=k}^{n-1} b^2_i$
$k=1,\ldots n-1$.

Moreover $[u'_{n-k},u'_{n-k\pm 1}]=(-2)^{k} \left(\prod\limits_{i=1}^{k-1} b_{n-i}\right)[u_{n-k},u'_{n-k\pm 1}]$ for $1\leq i< n$  hence 
$[u'_{n-k}+(-2)^{k+1} \left(\prod\limits_{i=1}^k b_{n-i}\right)u_{n-k},u'_{n-k\pm 1}]=0$
Observe finally, that the elements
$$u''_{n-k}=u'_{n-k}+(-2)^{k+1} \left(\prod_{i=1}^k b_{n-i}\right)u_{n-k}$$
generate a copy of $sl_{n}^{(b_1,\ldots,b_{n-2})}$ after the appropriate localization.

This finishes the proof.

\end{proof}

\subsection{Proof of Theorems \ref{th:conical electric2} and \ref{th:conical electric3}} 
\label{subsec:proof of Theorem conical electric2} 

We need the following fact. 

\begin{proposition}
\label{pr:local relations}

Let $\gg$ be a Kac-Moody Lie algebra with $I=\{1,\ldots,r\}$ and  $u_i:=e_i+b_{i-1}f_{i-1}$ (with $b_0=0$). Then for $k\ge 2$ one has: 

(a) $(ad~u_1)^k(u_2)=(ad~e_1)^k(e_2)-2\delta_{k,2}b_1u_1$,

\hskip 6.5mm $(ad~u_2)^k(u_1)=(ad~e_2)^k(e_1)-2b_1\delta_{k,2}(u_2+(a_{12}+1)e_2)$.

(b) $(ad~u_i)^k(u_{i+1})=(ad~e_i)^k(e_{i+1})+b_{i-1}^kb_i(ad~f_{i-1})^k(f_i)
-2b_i\delta_{k,2}(u_i-b_{i-1}(a_{i,i-1}+1)f_{i-1})$,

\hskip 6.5mm
$(ad~u_{i+1})^k(u_i)=(ad~e_{i+1})^k(e_i)+b_i^kb_{i-1}(ad~f_i)^k(f_{i-1})
-2b_i\delta_{k,2}(u_{i+1}-b_{i+1}(a_{i,i+1}+1))$.

(c) $[u_i,u_j]=0$ whenever $|i-j|>1$.

\end{proposition}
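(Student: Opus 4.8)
The plan is to write everything through the decomposition $ad~u_i=ad~e_i+b_{i-1}\,ad~f_{i-1}$ and to track the iterated bracket by the two \emph{towers} it generates. Starting from $u_{i+1}=e_{i+1}+b_if_i$, the summand $ad~e_i$ raises weights along the Serre chain $e_{i+1},[e_i,e_{i+1}],\dots$, while $b_{i-1}\,ad~f_{i-1}$ lowers weights along $f_i,[f_{i-1},f_i],\dots$; the only place the two towers can communicate is through $[e_i,f_i]=h_i$. Indeed one step gives $ad~u_i(u_{i+1})=(ad~e_i)(e_{i+1})+b_ih_i+b_{i-1}b_i(ad~f_{i-1})(f_i)$, so the entire content of (a)--(b) is to control the Cartan term $b_ih_i$ under further applications of $ad~u_i$.

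Part (c) is the warm-up and I would dispatch it first. For $|i-j|>1$ the four brackets $[e_i,e_j]$, $[e_i,f_{j-1}]$, $[f_{i-1},e_j]$, $[f_{i-1},f_{j-1}]$ all vanish: the first and last because the relevant off-diagonal Cartan entries are zero (here one uses the tridiagonality hypothesis $a_{pq}\neq 0\iff|p-q|=1$ inherited from Theorem \ref{th:conical electric2}), and the middle two because the universal relation $[e_p,f_q]=\delta_{pq}h_p$ never has equal indices when $|i-j|>1$. Note that, by contrast, (a) and (b) use only these universal relations near the indices $i-1,i,i+1$ and hold for any Kac-Moody $\gg$.

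The two towers are governed by a pair of vanishing facts, each proved by a one-line Jacobi induction using $[e_i,f_{i-1}]=0=[f_{i-1},e_{i+1}]$. First, $[f_{i-1},(ad~e_i)^m(e_{i+1})]=0$ for every $m\ge0$, so $ad~f_{i-1}$ never perturbs the $e$-tower. Second, $[e_i,(ad~f_{i-1})^m(f_i)]=(ad~f_{i-1})\bigl([e_i,(ad~f_{i-1})^{m-1}(f_i)]\bigr)$, whence from the seeds $[e_i,f_i]=h_i$ and $[e_i,[f_{i-1},f_i]]=[f_{i-1},h_i]=a_{i,i-1}f_{i-1}$ one gets $[e_i,(ad~f_{i-1})^m(f_i)]=0$ for all $m\ge2$. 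Together these say exactly that once $k\ge3$ the towers propagate independently and generate no fresh Cartan feedback, giving $(ad~u_i)^k(u_{i+1})=(ad~e_i)^k(e_{i+1})+b_{i-1}^kb_i(ad~f_{i-1})^k(f_i)$, i.e.\ the formula with $\delta_{k,2}=0$; I would formalize this as an induction on $k$ where the inductive step is precisely the two vanishing facts.

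The only genuinely computational step is the base case $k=2$. Here I would apply $ad~u_i$ to the three terms of $ad~u_i(u_{i+1})$ by hand, using $[e_i,h_i]=-2e_i$ and $[f_{i-1},h_i]=a_{i,i-1}f_{i-1}$, collect the surviving low-weight pieces $-2b_ie_i+2b_{i-1}b_ia_{i,i-1}f_{i-1}$, and rewrite them via $u_i=e_i+b_{i-1}f_{i-1}$ as the stated correction $-2b_i\bigl(u_i-b_{i-1}(a_{i,i-1}+1)f_{i-1}\bigr)$; this gives the first identity of (b). The second identity of (b) is the mirror computation with $ad~u_{i+1}=ad~e_{i+1}+b_i\,ad~f_i$ acting on $u_i$, where the feedback is now $[f_i,e_i]=-h_i$ and the $k=2$ remainder regroups analogously into the stated correction. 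Finally (a) is the boundary case $i=1$ with $b_0=0$, $u_1=e_1$: the first identity reduces to $(ad~e_1)^k(u_2)=(ad~e_1)^k(e_2)+b_1(ad~e_1)^k(f_1)$ together with $(ad~e_1)^2(f_1)=-2e_1$ and $(ad~e_1)^{\ge3}(f_1)=0$, and the second is the $i=1$ specialization of (b). The main obstacle is purely bookkeeping, namely organizing the Cartan-term feedback so that the factor $\delta_{k,2}$ emerges cleanly, which is exactly what the two Jacobi vanishing lemmas are designed to ensure.
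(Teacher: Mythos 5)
Your proposal is correct and takes essentially the same route as the paper's proof: a hands-on $k=2$ computation that isolates the Cartan feedback and regroups it via $u_i=e_i+b_{i-1}f_{i-1}$, followed by the observation that for $k\ge 3$ the $e$- and $f$-towers decouple — your two Jacobi vanishing lemmas are precisely the paper's remark that $[f_k,e_{iij}]=0$ and $[e_i,f_{kk\ell}]=0$, fed into the same induction (cf.\ \eqref{eq:r>2}). Your explicit note that part (c) needs the tridiagonality hypothesis $a_{ij}=0$ for $|i-j|>1$ inherited from Theorem \ref{th:conical electric2}, whereas (a) and (b) hold in any Kac--Moody algebra, makes precise what the paper dismisses as ``immediate''.
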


 \begin{proof} Prove (a). Indeed, 
$[u_1,u_2]=[e_1,e_2+b_1f_1]=e_{12}+b_1h_1$,
$$(ad~u_1)^2(u_2)=[e_1,e_{12}+b_1h_1]=(ad~e_1)^2(2_2)-2b_1e_1=(ad~e_1)^2(2_2)-2b_1u_1$$
$$(ad~u_2)^2(u_1)=[e_2+b_1f_1,e_{21}-b_1h_1]=(ad~e_2)^2(e_1)+b_1a_{12}e_2+b_1a_{12}e_2-2b_1^2f_1$$
$$=(ad~e_2)^2(e_1)-2b_1u_2+2b_1(a_{12}+1)e_2$$

In particular, 
$(ad~u_1)^3(u_2)=(ad~e_1)^3(e_2)$,
$$(ad~u_2)^3(u_1)=(ad~e_2)^3(e_1)+b_1[f_1,(ad~e_2)^2(e_1)]=(ad~e_2)^3(e_1) \ .$$
By taking commutators repeatedly with $u_1$ and $u_2$ respectively, we finish the proof of (a).

Prove (b).
$[u_i,u_j]=[e_i+t_kf_k,e_j+t_\ell f_\ell]=e_{ij}+t_kt_\ell f_{k\ell}+\delta_{i\ell} t_ih_i-\delta_{jk} t_jh_j$, where $e_{ij}:=[e_i,e_j]$, $f_{k\ell}=[f_k,f_\ell]$. Then 
$$(ad~u_i)^2(u_j)=[e_i+t_kf_k,e_{ij}+t_kt_\ell f_{k\ell}+\delta_{i\ell} t_ih_i-\delta_{jk} t_jh_j]$$
$$=e_{iij}+t_k^2t_\ell f_{kk\ell}+\delta_{i\ell} t_i t_k a_{ik}f_k-\delta_{i\ell}t_i(2e_i-a_{ik}t_kf_k)+\delta_{jk}a_{ji}t_je_i+\delta_{jk}t_j(a_{ji}e_i-2t_jf_j)$$
$$=e_{iij}+t_k^2t_\ell f_{kk\ell}+t_k(2\delta_{i\ell} t_i a_{ik}-2\delta_{jk}t_j)f_k+2(\delta_{jk}a_{ji}t_j-\delta_{i\ell}t_i)e_i$$
if $k\ne i$, $\ell\ne j$
where $e_{ab}:=[e_a,e_b]$, $f_{ab}:=[f_a,f_b]$, $e_{aab}:=[e_a,e_{ab}]$, $f_{aab}:=[f_a,f_{ab}]$.

Therefore, we obtain 
$(ad~u_i)^3(u_j)=(ad~e_i)^3(e_j)+t_k^3t_\ell (ad~f_k)^3(f_\ell)$
because $[e_i,f_{kk\ell}]=0$, $[f_k,e_{iij}]=0$.

By taking commutators repeatedly with $u_i$, we obtain by induction in $r$:
\begin{equation}
    \label{eq:r>2}
(ad~u_i)^r(u_j)=(ad~e_i)^r(e_j)+t_k^{r-1}t_\ell(ad~f_k)^r(f_\ell) \ .
\end{equation}
Taking $j=i+1,k=i-1,\ell=i$, $r\mapsto k$, we finish the proof of the first part of (b). Taking $i\mapsto i+1,j=k=i,\ell=i-2$, $r\mapsto k$, we finish the proof of the first part of (b) (all $t_m=b_m$).

This finishes the proof of (b).

Part (c) is immediate.

The proposition is proved.
\end{proof}

Now we can finish proof of Theorem \ref{th:conical electric2}. Indeed, in view of Proposition \ref{pr:local relations}(a) one can take $a'_{12}=a_{12}$, $a'_{21}=a_{21}$ 

Also, if $i>1$, then in view of Proposition \ref{pr:local relations}(b), one can take $a'_{i,i+1}=\min(a_{i,i+1},a_{i-1,i})$ and $a'_{i+1,i}=\min(a_{i+1,i},a_{i,i-1})$.

Finally, in view of Proposition \ref{pr:local relations}(c), one can take $a'_{ij}=a_{ij}=0$ whenever $|i-j|>1$.

Thus, Theorem \ref{th:conical electric2} is proved. \endproof

In the notation of Theorem \ref{th:conical electric3}, denote 
$$u'_i:= \begin{cases}
e_i+a_{i,i-1}a_{i-1}a_if_{i-1} & \text{if $1\le i\le r$}\\
e_i+a_{ir}a_ra_if_r & \text{if $r< i\le r+d$}\\
\end{cases}\ .$$

Clearly,  the relations for the pairs
$(u'_i,u'_{i+1})$, $i=1,\ldots,r-1$ 
 follow from Theorem \ref{th:conical electric2}.

It is also clear that $[u'_i,u'_j]=0$ whenever $i=1,\ldots,r$, $j=r+1,\ldots,r+d$ and $r<i,j\le r+d$.

Theorem \ref{th:conical electric3} is proved. \endproof

\subsection{Proof of Theorems \ref{th:conical electric2 injective}}
\label{subsec:proof of Theorem conical electric2 injective}
We retain the notation of Section \ref{subsec:Proof of Theorem conical electrical2}.
Theorem \ref{th:conical electric3} implies that that the homomorphism of 
Theorem \ref{th:conical electric2 injective} is well-defined. 
Applying the argument of Section \ref{subsec:Proof of Theorem conical electrical2} once again, we obtain the desired assertion.

Thus, Theorem \ref{th:conical electric2 injective} is proved. \endproof

\subsection{Proof of Theorem \ref{th:conical electrical2 non-conjugate}}
\label{subsec:proof of Theorem conical electrical2 non-conjugate}
Taking $\gg=\gg''=\gg$ in Theorem \ref{th:conical electric2 injective} with $r=2$ we obtain the assertion of (a).

Taking $\gg=\gg''=so_{2n}$ in Theorem \ref{th:conical electric2 injective} with the branch at the vertex $r=n-2$, $d=2$ of the Dynkin diagram $I=\{1,\ldots,n\}$ we obtain the assertion of (b).

Prove (c). Let $\gg=\widehat {sl}_n$, $I=\{1,\ldots,n\}$, $1$ and $n$ are neighbors. Taking $sl_n\subset \gg$ corresponding to $I\setminus\{n\}=\{1,\ldots,n-1\}$ gives us relations of $sl_n^{({\bf b})}$ for $u_1,\ldots,u_{n-1}$. The remaining relations of $\gg^{({\bf b})}$ can be obtained by the cyclic symmetry $i\mapsto i+1\mod n$.

Applying the argument of Section \ref{subsec:Proof of Theorem conical electrical2} once again, we see that the subalgebra generated by $u_i$, $i\in I$ is isomorphic to $\gg^{({\bf b})}$, i.e., is its edge model.

Theorem \ref{th:conical electrical2 non-conjugate} is proved. \endproof

\section{Appendix: Flat deformations}
\label{Appendix A: Flat deformations}

For any ring $R$ and any set $X$ denote by $R\langle X\rangle$ the free $R$-algebra freely generated by $X$.

Let $B$ be an algebra over $R$ and $X\subset B$ be a generating set. Denote by $\varphi_{X,B}$ the canonical surjection $R\langle X\rangle\twoheadrightarrow B$, denote $J_X:=Ker~\varphi_{X,B}$. The natural filtration  on $R\langle X\rangle$ with $\deg x=1$ for all $x\in X$ induces a natural filtration on $B$, so let $grB$ be the associated graded algebra of $B$. Clearly, $gr R\langle X\rangle=R\langle X\rangle$. 
For any subset $S\subset B$ denote by $grS$ the image of $S$ in $grB$.

If $A$ is a filtered algebra and $J$ is a two-sided ideal of $A$, then 

$gr A/J$ is canonically isomorphic to $gr A/gr J$. In particular, 
$$gr B=R\langle gr X\rangle/gr J_X\ .$$
Denote by $\varphi_{grX,grB}$ the canonical surjection $R\langle grX\rangle\twoheadrightarrow grB=R\langle gr X\rangle/gr J_X$.

\begin{proposition} 
\label{pr:flat filtred algebras}
Let $B$ be an algebra over $R$ generated by $X\subset B$. Suppose that $Y$ is a subset of $R\langle X\rangle$ such that

$\bullet$ $Y$ is contained in the kernel of $\varphi_{X,B}$.

$\bullet$ The kernel of  $\varphi_{grX,grB}$ is generated by $grY$. 

Then $B=R\langle X\rangle/\langle Y\rangle$, 
where $\langle Y\rangle$ denotes the two-sided ideal of $R\langle X\rangle$
generated by $Y$.
\end{proposition}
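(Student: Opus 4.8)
The plan is to prove this as a general statement about filtered algebras, by comparing $B$ with the quotient $\tilde B := R\langle X\rangle/\langle Y\rangle$ and showing the canonical surjection between them is an isomorphism. Since $Y$ lies in $\ker\varphi_{X,B} = J_X$, the two-sided ideal $\langle Y\rangle$ is contained in $J_X$, so $\varphi_{X,B}$ factors through $\tilde B$, giving a surjection $\pi\colon \tilde B \twoheadrightarrow B$. The goal is to show $\pi$ is injective, i.e. that $\langle Y\rangle = J_X$.

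The key idea is to pass to the associated graded algebras and use the standard fact, recorded just before the statement, that $\operatorname{gr}(A/J) \cong \operatorname{gr}A/\operatorname{gr}J$ for a filtered algebra $A$ with two-sided ideal $J$. First I would equip $\tilde B$ with the quotient filtration inherited from $R\langle X\rangle$, so that $\operatorname{gr}\tilde B = R\langle \operatorname{gr}X\rangle/\operatorname{gr}\langle Y\rangle$. The hypotheses give me control of $\operatorname{gr}B = R\langle \operatorname{gr}X\rangle/\operatorname{gr}J_X$, whose defining ideal $\operatorname{gr}J_X = \ker\varphi_{\operatorname{gr}X,\operatorname{gr}B}$ is generated by $\operatorname{gr}Y$ by the second bullet. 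Now $\pi$ is a filtered surjection, hence induces a surjection $\operatorname{gr}\pi\colon \operatorname{gr}\tilde B \twoheadrightarrow \operatorname{gr}B$ of graded algebras. If I can show $\operatorname{gr}\pi$ is an isomorphism, then a routine filtration-degree induction shows $\pi$ itself is an isomorphism (injectivity of the associated graded map forces injectivity of the original filtered map, provided the filtrations are exhaustive and separated, which they are here since they are the natural degree filtrations bounded below).

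The heart of the matter is therefore the identity $\operatorname{gr}\langle Y\rangle = \langle \operatorname{gr}Y\rangle$ inside $R\langle\operatorname{gr}X\rangle$, which would give $\operatorname{gr}\tilde B = R\langle\operatorname{gr}X\rangle/\langle\operatorname{gr}Y\rangle = R\langle\operatorname{gr}X\rangle/\operatorname{gr}J_X = \operatorname{gr}B$, making $\operatorname{gr}\pi$ an isomorphism. One inclusion, $\langle\operatorname{gr}Y\rangle \subseteq \operatorname{gr}\langle Y\rangle$, is automatic since the leading term of each element of $Y$ lies in $\operatorname{gr}\langle Y\rangle$ and the latter is a two-sided ideal. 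For the reverse inclusion I would argue that the symbol of any element $c$ of $\langle Y\rangle$ is a sum of symbols of terms $a\,y\,b$ with $y\in Y$; after discarding lower-order cancellations, the top-degree part is expressible through the $\operatorname{gr}y$, so $\operatorname{gr}c \in \langle\operatorname{gr}Y\rangle$.

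The step I expect to be the main obstacle is precisely this reverse inclusion $\operatorname{gr}\langle Y\rangle \subseteq \langle\operatorname{gr}Y\rangle$, because leading terms do not commute with sums: when one writes an element of $\langle Y\rangle$ as $\sum_i a_i y_i b_i$, the top-degree homogeneous components of the summands can cancel, so the symbol of the whole need not be visibly a combination of the $\operatorname{gr}y_i$. The careful way around this is to filter the expression by total degree and induct downward, at each stage replacing $y_i$ by its leading symbol modulo $\langle\operatorname{gr}Y\rangle$ and absorbing the lower-order error into a fresh representation at strictly smaller degree; since the filtration is bounded below this terminates. I would phrase this cleanly using the description $\operatorname{gr}B = R\langle\operatorname{gr}X\rangle/\operatorname{gr}J_X$ together with the hypothesis that $\operatorname{gr}J_X$ is already generated by $\operatorname{gr}Y$, which lets me identify $\operatorname{gr}\tilde B$ and $\operatorname{gr}B$ via $\operatorname{gr}\pi$ without ever manipulating individual cancellations by hand.
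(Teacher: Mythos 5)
Your overall frame (pass to $\tilde B=R\langle X\rangle/\langle Y\rangle$, show $gr\,\pi$ is an isomorphism, deduce $\pi$ is one) is sound and is essentially the paper's argument in different clothing; the automatic inclusion $\langle grY\rangle\subseteq gr\langle Y\rangle$ and the ``graded isomorphism implies isomorphism'' induction are both fine. The gap is in the step you yourself flag as the heart of the matter: the inclusion $gr\langle Y\rangle\subseteq\langle grY\rangle$. This is not a formal identity, and the downward induction you sketch (``absorbing the lower-order error into a fresh representation at strictly smaller degree'') cannot be carried out: when the top-degree parts of $\sum_i a_iy_ib_i$ cancel, the cancellation is a syzygy among the symbols $gr~y_i$, and such syzygies need not lift to re-expressions of the element through $Y$ in smaller degree. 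Concretely, in $R\langle x,y\rangle$ with $Y=\{xy-1,\ y\}$ one has $1=x\cdot y-(xy-1)\in\langle Y\rangle$, so $1\in gr\langle Y\rangle$, while $\langle grY\rangle=\langle xy,\,y\rangle=\langle y\rangle$ consists of elements with zero constant term; hence the identity $gr\langle Y\rangle=\langle grY\rangle$ is simply false without the second hypothesis, and no purely formal cancellation bookkeeping can prove it. Your closing sentence, that the hypothesis ``lets me identify $gr\tilde B$ and $grB$,'' is circular as stated, because that identification is exactly the inclusion under discussion.

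The fix is a one-liner, and it is what the paper does: since $\langle Y\rangle\subseteq J_X$, monotonicity of $gr$ on subspaces gives $gr\langle Y\rangle\subseteq grJ_X$, and the second hypothesis says $grJ_X=\langle grY\rangle$; combined with the automatic inclusion this yields the sandwich $\langle grY\rangle\subseteq gr\langle Y\rangle\subseteq grJ_X=\langle grY\rangle$, so all three ideals coincide. In particular $gr\langle Y\rangle=grJ_X$, and your final step (equality of associated graded objects of nested ideals, with the filtration exhaustive and bounded below, forces equality) then gives $\langle Y\rangle=J_X$, i.e.\ $B=R\langle X\rangle/\langle Y\rangle$. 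So the second hypothesis is not a device for organizing cancellations by hand; it is used exactly once, via monotonicity, to supply the inclusion your induction could not produce.
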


\begin{proof} Indeed,  for any subset $Y\subset R\langle X\rangle$, the inclusion $grY \subset gr\langle Y\rangle$ implies the inclusion of ideals $\langle grY\rangle\subset gr\langle Y\rangle$.  Now let $Y$ be as in the assumptions of the proposition. Then the inclusion $Y\subset J_X$ implies the inclusion of ideals $\langle Y\rangle\subset J_X$ hence $gr\langle Y\rangle\subset grJ_X$ (if the latter inclusion is an equality, then so is the former). Taking into account that $gr J_X=\langle gr Y\rangle$, we obtain $gr\langle Y\rangle\subset \langle grY\rangle$. 
These two opposite inclusions imply that $gr\langle Y\rangle= grJ_X$. Therefore, $J_X=\langle Y\rangle$. 

The proposition is proved.
\end{proof}

\section*{Acknowledgments}
For the third author this work is an output of a research project implemented as part of the Basic Research Program at the National Research University Higher School of Economics (HSE University).
AB and VG are grateful to the MPI in Bonn, Heidelberg University, and University of Geneva where a part of this work was accomplished.

\end{document}